\numberwithin{equation}{section}
\newtheorem{theorem}{Theorem}[section]
\newtheorem{proposition}[theorem]{Proposition}
\newtheorem{lemma}[theorem]{Lemma}
\theoremstyle{definition}
\newcommand{\va}{\varepsilon}
\newcommand{\ds}{\displaystyle}
\newcommand{\sik}{\sum_{i=1}^k}
\newcommand{\be}{\beta}
\newcommand{\al}{\alpha}
\def\r{\mathbb{R}}
\begin{document}

\title [Infinitely many solutions for a nonlocal equation]
{Positive or sign-changing solutions for a critical semilinear nonlocal equation}

\thanks{The authors sincerely thank Professor Shuangjie Peng for helpful discussions
and suggestions.}

\date{}
 \author{Wei Long \,\,and \,\,Jing Yang}

 \address{School of Mathematics and Statistics, Central China Normal
University, Wuhan, 430079, P. R. China\newline
 \noindent\quad College of Mathematics
and Information Science, Jiangxi Normal University, Nanchang,
Jiangxi 330022, P. R. China }

\email{ hopelw@126.com}

\address{School of Mathematics and Statistics, Central China
Normal University, Wuhan, 430079, P. R. China }

\email{ yyangecho@163.com}

\address{}

\begin{abstract}

We consider the following critical semilinear nonlocal equation involving the fractional Laplacian
$$
(-\Delta)^su=K(|x|)|u|^{2^*_s-2}u,\ \ \hbox{in}\ \  \r^N,
$$
where $K(|x|)$ is a positive radial function, $N>2+2s$, $0<s<1$, and
$2^*_s=\frac{2N}{N-2s}$. Under some asymptotic assumptions on $K(x)$ at an extreme point, we
show that this problem  has infinitely many non-radial positive  or sign-changing solutions.

 { Key words }: fractional Laplacian; semilinear nonlocal equation; critical; reduction
method.

{ AMS Subject Classifications:} 35J20, 35J60
\end{abstract}

\maketitle
\section{Introduction}
\ \ \ \ In this paper, we consider the following critical semilinear nonlocal equation involving the fractional Laplacian
\begin{equation} \label{eq}
(-\Delta)^su=K(x)|u|^{2^*_s-2}u,\ \ \hbox{in} \ \ \r^N
\end{equation}
with $N > 2+2s$, where $K(x)$ is a positive continuous potential, $0<s<1$,  $2^*_s=\frac{2N}{N-2s}$.
Here, the fractional Laplacian of a function $f:\r^N\rightarrow \r$ is
expressed by the formula
\begin{equation} \label{1.3}
(-\Delta)^sf(x)=C_{N,s}P.V.\int_{\r^N}\frac{f(x)-f(y)}{|x-y|^{N+2s}}dy=C_{N,s}\lim_{\delta \rightarrow
0}\int_{\r^N\setminus B_\delta (x)}\frac{f(x)-f(y)}{|x-y|^{N+2s}}dy,
\end{equation}
where
$C_{N,s}$ is some normalization constant.

Recently, a great attention has been focused on the study of problems involving
the fractional Laplacian, from a pure mathematical point of view as well as
from concrete applications, since this operator naturally arises in several areas such as physics, probability
and finance, see for instance \cite{a,b,cp,s}. In fact, the fractional Laplacian
can be understood as the infinitesimal generator of a stable L\'{e}vy process (see \cite{b}). The literature involving
the fractional Laplacian is really too wide to attempt any reasonable comprehensive
treatment in a single paper. We would just mention some very recent papers
which analyze fractional elliptic equations involving the critical Sobolev exponent (cf. \cite{bcps,sv,clo,li,lz,l,t,sv1,dpv,sz}).

It is well known, but not completely trivial, that $(-\Delta)^s$ reduces to the standard Laplacian $-\Delta$ as $s\to 1$.
Especially, when $s=1$, equation \eqref{eq} is reduced to the classical semilinear elliptic equation
\begin{equation}\label{1.3}
\left\{\begin{array}{ll}
       -\Delta u=K(x)|u|^{\frac{2}{N-2}}u,\ \ \ \ \ x\in\ \r^N\vspace{2mm}\\
         u\in D^{1,2}(\r^N),\\
    \end{array}\right.\\
\end{equation}
where $D^{1,2}(\r^N)$ denotes the completion of $C_0^{\infty}(\r^N)$ under the norm $\int_{\r^N}|\nabla u|^2$.

Our main interest in this paper is to investigate the multiplicity of solutions to equation \eqref{eq}. Before our study on this problem, we would like draw the reader's attention to some recent results on the multiplicity of positive solutions to equation \eqref{1.3}.
Amrosetti, Azorero and Peral \cite{aap}, and Cao, Noussair and Yan \cite{cny} proved the existence of two or many positive solutions if $K$ is a
perturbation of the constant, i.e.
\begin{equation*}
K= K_0+\varepsilon h(x), 0 < \varepsilon\ll 1.
\end{equation*}
In \cite{li1}, Li proved that \eqref{1.3} has infinitely many positive solutions if $K(x)$ is periodic,
while similar result was obtained in \cite{y} if $K(x)$ has a sequence of strict local maximum points
tending to infinity. In particular, in \cite{wy1},
Wei and Yan gave a very interesting result which says that equation \eqref{1.3} with $K(x)$ being radial has solutions with
large number of bumps near infinity and the energy of these solutions
can be arbitraily large. The reader can refer to \cite{cwy, wy2,wy3} for the existence of
infinitely solutions on semilinear equations involving critical and supcritical exponents.

 When $0<s<1,$ Chen and Zheng \cite{cz} studied  the following singularly perturbed problem
\begin{equation} \label{1.7}
\varepsilon^{2s}(-\Delta)^su+V(x) u=|u|^{p-1}u,\ \ \hbox{in} \ \ \r^N.
\end{equation}
They showed that when $N=1,2,3,$ $1<p<2^*_s-1$ and $\varepsilon$ is sufficiently small,
$\max\{\frac12,\frac n4\}<s<1$ and $V$ satisfies some smoothness and boundedness assumptions, equation
\eqref{1.7} has a nontrivial solution $u_{\epsilon}$
concentrated  to some single point as $\epsilon\rightarrow0.$ Very recently, in
\cite{dpw}, D\'{a}vila, del Pino and Wei generalized various existence results known for \eqref{1.7} with $s=1$ to the case of fractional Laplacian.
In \cite{lpy}, the authors gave a result which says that \eqref{1.7} with $V(x)$ being radial has solutions with
large number of bumps near infinity and the energy of this solutions
can be very large when $\va$ is fixed and $N\geq 2$.

Naturally, one would like know if the \textit{critical} equation \eqref{eq} has infinitely many non-radial solutions.
To the best of our knowledge, it seems that there is no answer for this question. The aim of this paper is to obtain infinitely many non-radial positive (sign-changing)
solutions for \eqref{eq} whose functional energy are arbitrarily large, under some assumptions that
 $K(x)=K(|x|)>0$ has a local maximum (minimum) at some point $r_0 > 0$.

 Letting
\begin{equation}\label{m}
\max\{2,N-2s-2\cdot\frac{(N-2s)^2}{N+2s}\}<m< N-2s,
\end{equation}
we assume that $0<K(|x|)\in C(\r^N)$
satisfies the following conditions at $r_0$
$$
(K):\,\,\, K(r)=K(r_0)-c_0|r-r_0|^m+O(|r-r_0|^{m+\theta}),\,\,\hbox{as}\,\,r\in (r_0-\delta,r_0+\delta),
$$
or
$$
(K'):\,\,\, K(r)=K(r_0)+c_0|r-r_0|^m+O(|r-r_0|^{m+\theta}),\,\,\hbox{as}\,\,r\in (r_0-\delta,r_0+\delta),
$$
where $c_0 > 0, \theta>0,\ \delta>0$ are some constants.

Without loss of generality, in what follows we assume that $K(r_0) = 1$.

Our main results in this paper can be stated as follows
\begin{theorem}\label{th1}
 Suppose that $N>2+2s,\,\,0<s<1$, and $K(r)$ satisfies ($K$). Then problem \eqref{eq} has
infinitely many non-radial positive solutions.
\end{theorem}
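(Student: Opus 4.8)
\medskip\noindent\textbf{Sketch of the argument.} The plan is to carry out a finite-dimensional (Lyapunov--Schmidt) reduction, following the Wei--Yan scheme adapted to the critical problem. The building block is the Aubin--Talenti bubble
\begin{equation*}
U_{\mu,\xi}(x)=c_{N,s}\Bigl(\frac{\mu}{\mu^2+|x-\xi|^2}\Bigr)^{\frac{N-2s}{2}},\qquad \mu>0,\ \xi\in\r^N,
\end{equation*}
which solves $(-\Delta)^sU_{\mu,\xi}=U_{\mu,\xi}^{2^*_s-1}$ in $\r^N$ and is non-degenerate, the kernel of its linearization in $\dot H^s(\r^N)$ being spanned by $\partial_\mu U_{\mu,\xi}$ and $\partial_{\xi_1}U_{\mu,\xi},\dots,\partial_{\xi_N}U_{\mu,\xi}$. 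For a large integer $k$, a radius $r$ near $r_0$, and a scale $\mu=\Lambda k^{-\beta}$ with $\beta=\frac{N-2s}{N-2s-m}$ and $\Lambda$ ranging in a fixed compact subinterval of $(0,+\infty)$, take
\begin{equation*}
W_{r,\Lambda}=\sum_{j=1}^{k}U_{\mu,x_j},\qquad x_j=r\Bigl(\cos\tfrac{2\pi(j-1)}{k},\ \sin\tfrac{2\pi(j-1)}{k},\ 0,\dots,0\Bigr)\in\r^N,
\end{equation*}
and work in the subspace $\mathcal H$ of $\dot H^s(\r^N)$ of functions invariant under the rotation by angle $2\pi/k$ in the $(x_1,x_2)$-plane and under the reflections $x_i\mapsto-x_i$, $i\ge 2$. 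By the principle of symmetric criticality it suffices to find a critical point in $\mathcal H$ of $I(u)=\tfrac12\|u\|^2-\tfrac1{2^*_s}\int_{\r^N}K(|x|)|u|^{2^*_s}$, with $\|\cdot\|$ the $\dot H^s$-norm (which may be realized through the Caffarelli--Silvestre $s$-harmonic extension, as in the works cited in the introduction). The symmetry restriction is crucial: within $\mathcal H$ the only surviving approximate-kernel directions of $W_{r,\Lambda}$ are $\partial_rW_{r,\Lambda}$ and $\partial_\Lambda W_{r,\Lambda}$, so the reduced problem is two-dimensional.

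\medskip\noindent\textbf{Reduction.} Let $E_{r,\Lambda}=\mathrm{span}\{\partial_rW_{r,\Lambda},\partial_\Lambda W_{r,\Lambda}\}$ and let $\Pi^\perp$ be the orthogonal projection of $\mathcal H$ onto $E_{r,\Lambda}^{\perp}$. One first estimates, in a weighted $L^{\infty}$-type norm adapted to the $k$ bubbles, the error $\mathcal R_{r,\Lambda}:=(-\Delta)^sW_{r,\Lambda}-K(|x|)W_{r,\Lambda}^{2^*_s-1}$, which is governed by the pairwise bubble interactions $\int_{\r^N} U_{\mu,x_i}^{2^*_s-1}U_{\mu,x_j}\,dx\sim(\mu/|x_i-x_j|)^{N-2s}$ together with the potential term $(1-K(|x|))W_{r,\Lambda}^{2^*_s-1}$, of size $O(\mu^m)$ near each $x_j$ by hypothesis $(K)$. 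The central step — the one I expect to be hardest — is to prove that $\Pi^\perp\circ\bigl[(-\Delta)^s-(2^*_s-1)K(|x|)W_{r,\Lambda}^{2^*_s-2}\bigr]$ is invertible on $E_{r,\Lambda}^{\perp}\cap\mathcal H$ with the norm of the inverse bounded uniformly in $k$, $r$ and $\Lambda$; this amounts to separating the genuine (two-dimensional, by symmetry) approximate kernel from the near-resonances created by the mutual interaction of the $k$ concentrated bubbles, in weighted norms whose decay exponent must be tuned compatibly with the admissible range \eqref{m} of $m$ (the lower bound $m>2$ being precisely what keeps the correction term of lower order). Granting this, a contraction-mapping argument produces a $C^1$ map $(r,\Lambda)\mapsto\phi_{r,\Lambda}\in E_{r,\Lambda}^{\perp}\cap\mathcal H$ solving
\begin{equation*}
\Pi^\perp\Bigl[(-\Delta)^s\bigl(W_{r,\Lambda}+\phi_{r,\Lambda}\bigr)-K(|x|)\bigl(W_{r,\Lambda}+\phi_{r,\Lambda}\bigr)^{2^*_s-1}\Bigr]=0,
\end{equation*}
with $\phi_{r,\Lambda}$ so small that it does not affect the leading terms of the reduced energy below.

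\medskip\noindent\textbf{Reduced functional and conclusion.} Set $\mathcal F(r,\Lambda)=I(W_{r,\Lambda}+\phi_{r,\Lambda})$; a standard argument shows that an interior critical point of $\mathcal F$ gives a genuine solution $u=W_{r,\Lambda}+\phi_{r,\Lambda}$ of \eqref{eq}. Using the interaction estimate for neighbouring bubbles together with, after the substitution $x=x_1+\mu y$ and with $\tau:=(r-r_0)/\mu$, the expansion $\int_{\r^N}(1-K(|x|))U_{\mu,x_1}^{2^*_s}\,dx=c_0\mu^m\Psi(\tau)+o(\mu^m)$, where $\Psi(\tau)=\int_{\r^N}|\tau+y_1|^m U_{1,0}(y)^{2^*_s}\,dy$, one is led to
\begin{equation*}
\mathcal F(r,\Lambda)=k\,A_0+k^{\,1-\gamma}\bigl(-a\,\Lambda^{N-2s}+b\,\Lambda^m\,\Psi(\tau)\bigr)+o\bigl(k^{\,1-\gamma}\bigr),\qquad \gamma=\frac{m(N-2s)}{N-2s-m},
\end{equation*}
valid in $C^1$ in the rescaled variables $(\tau,\Lambda)$, where $A_0=\tfrac sN\|U_{1,0}\|^2$ is the energy of one bubble and $a,b>0$ are explicit constants. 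The function $\Psi$ is even, strictly convex and tends to $+\infty$ as $|\tau|\to+\infty$, so the model $G(\tau,\Lambda)=-a\Lambda^{N-2s}+b\Lambda^m\Psi(\tau)$ has the critical point $\tau=0$, $\Lambda=\Lambda_0:=\bigl(mb\,\Psi(0)/((N-2s)a)\bigr)^{1/(N-2s-m)}$, whose Hessian there equals $\mathrm{diag}\bigl(b\Lambda_0^m\Psi''(0),\ m(m-N+2s)b\,\Psi(0)\,\Lambda_0^{m-2}\bigr)$ — non-degenerate, of signature $(+,-)$, precisely because $m<N-2s$. Hence for every large $k$ the function $\mathcal F$ has an interior critical point $(r_k,\Lambda_k)$ with $r_k\to r_0$ and $\Lambda_k\to\Lambda_0$, which yields a solution $u_k$ of \eqref{eq}. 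Finally, since $\phi_{r_k,\Lambda_k}$ is small in the weighted norm while $W_{r_k,\Lambda_k}>0$, the function $u_k$ is nonnegative and nontrivial, hence positive by the strong maximum principle for $(-\Delta)^s$; it has exactly $k$ bumps at the vertices of a regular $k$-gon, so it is invariant under the rotation by $2\pi/k$ but not under all rotations, i.e.\ non-radial; and from $I(u_k)=\tfrac sN\|u_k\|^2=kA_0+o(k)$ the norms $\|u_k\|$ diverge, so the $u_k$ are mutually distinct for large $k$. This produces infinitely many non-radial positive solutions of \eqref{eq}, proving Theorem~\ref{th1}.
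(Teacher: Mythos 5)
Your overall route is the same as the paper's: the same ansatz of $k$ bubbles at the vertices of a regular polygon of radius $r\approx r_0$ with concentration parameter balanced so that the potential defect $O(\mu^m)$ and the neighbour interaction $O\big((k\mu/r)^{N-2s}\big)$ are of the same size $k^{-\gamma}$, the same symmetry class reducing the problem to the two variables $(r,\Lambda)$, the same weighted-norm linear theory plus contraction mapping, and the same leading reduced energy $k A_0+k^{1-\gamma}\big(-a\Lambda^{N-2s}+b\Lambda^m\Psi(\tau)\big)$ (your $\Psi(\tau)$ is the paper's expansion $B_0\va^{-m}+B_1\va^{-(m-2)}(\nu r_0-r)^2+\dots$ written globally in the rescaled radial variable). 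Your model computation of the saddle $(\tau,\Lambda)=(0,\Lambda_0)$ and its signature $(+,-)$ is consistent with the paper's \eqref{F}--\eqref{F'}.

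The substantive divergence, and the place where your argument has a genuine gap, is the final step. You conclude by asserting that the expansion of $\mathcal F$ is ``valid in $C^1$ in the rescaled variables $(\tau,\Lambda)$'' and then invoking nondegeneracy of the model critical point. That $C^1$ statement in the $\tau$ (equivalently $r$) direction is exactly what is \emph{not} established by the reduction as you (and the paper) set it up: the contraction mapping only gives smallness of $\phi_{r,\Lambda}$ in the weighted norm, and differentiating the reduced energy in $r$ requires either estimates on $\partial_r\phi_{r,\Lambda}$ or control of the $r$-derivatives of all the $O(\cdot)$ error terms, which is further aggravated by the factor $\mu^{-1}\sim\nu$ when you pass from $\partial_r$ to $\partial_\tau$. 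The paper is structured precisely to avoid this: it proves only the $C^0$ expansion of $F(r,\va)$ (Proposition \ref{prop4.1}) and the $\va$-derivative expansion (Proposition \ref{prop4.2}), and then captures the saddle by a min--max over the class $\Gamma$ together with a negative-gradient-flow invariance argument (Lemma \ref{lm4.3}), which never uses $\partial F/\partial r$. So either you must supply the missing $C^1$ estimates in $r$ (a nontrivial addition to the linear theory), or replace your last step by a linking/flow argument of the paper's type. A smaller point: positivity of $u_k$ does not follow merely from ``$\phi$ small and $W>0$'', since $\|\phi\|_*$ is measured against a weight decaying more slowly than the bubbles, so $W+\phi$ need not be pointwise nonnegative far from the concentration points; the standard fix is to test the equation with $u_k^-$ and use the Sobolev inequality to force $u_k^-\equiv 0$, which does work here because $\|\phi\|_*\,k^{1/2^*_s}\to 0$.
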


\begin{theorem}\label{th2}
 Suppose that $N>2+2s,\,\,0<s<1$, and $K(r)$ satisfies ($K'$). Then problem \eqref{eq} has
infinitely many non-radial sign-changing solutions.
\end{theorem}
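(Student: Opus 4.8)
The plan is to prove both theorems by one and the same finite-dimensional Lyapunov--Schmidt reduction, building solutions out of a large number $k$ of bubbles placed at the vertices of a regular $k$-gon on the sphere $\{|x|\approx r_0\}$; I describe it for Theorem~\ref{th2} (hypothesis ($K'$), sign-changing solutions), the proof of Theorem~\ref{th1} being identical except for the sign pattern of the bubbles. First I would pass to the Caffarelli--Silvestre extension, rewriting \eqref{eq} as a degenerate elliptic equation on $\r^{N+1}_+$ with a nonlinear Neumann condition on $\{y=0\}$, set in $D^{1,2}(\r^{N+1}_+,y^{1-2s})$, whose trace norm is comparable to the $\dot H^s(\r^N)$-norm (one could equivalently work with the intrinsic nonlocal quadratic form). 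The building block is the extension $W_{\va,\xi}$ of $W_{\va,\xi}(x)=\va^{-(N-2s)/2}W((x-\xi)/\va)$, where $W(x)=\al_{N,s}(1+|x|^2)^{-(N-2s)/2}$ is the unique positive solution of $(-\Delta)^sW=W^{2^*_s-1}$; the non-degeneracy of $W$ (D\'avila--del Pino--Sire) is the algebraic input that makes the reduction run.

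Next I would fix a large even integer $k$, put $\xi_j=r\bigl(\cos\tfrac{2\pi j}{k},\sin\tfrac{2\pi j}{k},0,\dots,0\bigr)$ for $j=1,\dots,k$, with $r$ in a small neighbourhood of $r_0$ and $\va=\va(k)\to0$ to be chosen, and take as approximate solution the sign-alternating superposition $W_{r,\va}=\sum_{j=1}^k(-1)^jW_{\va,\xi_j}$ (all signs $+$ for Theorem~\ref{th1}). The whole construction is carried out in the subspace of functions invariant under the rotation $x\mapsto R_{2\pi/k}x$ composed with the sign flip (for ($K'$)), together with the reflections $x_2\mapsto-x_2$ and $x_i\mapsto-x_i$, $i\ge3$; a solution in this class necessarily carries exactly $k$ bumps, hence is non-radial, and the symmetry kills every kernel direction of the bubbles except the two collective ones --- the uniform dilation $\pa_\va W_{r,\va}$ and the radial motion $\pa_rW_{r,\va}$ --- matched to the two free parameters $r,\va$. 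I would then look for a genuine solution $U=W_{r,\va}+\phi$ with $\phi$ in the symmetric space and orthogonal to $\mathrm{span}\{\pa_rW_{\va,\xi_j},\pa_\va W_{\va,\xi_j}\}$, by a contraction mapping argument. This needs a coercivity estimate for the linearized operator on the orthogonal complement that is \emph{uniform in $k$}, together with the interaction estimates $\int W_{\va,\xi_i}^{2^*_s-1}W_{\va,\xi_j}\sim(\va k/r)^{N-2s}$ and the error bound on $W_{r,\va}$, and it produces a unique $\phi=\phi_{r,\va}$ with $\|\phi\|\lesssim\va^{m/2+\sigma}+(\va k/r)^{(N-2s)/2+\sigma}$ for some $\sigma>0$. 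I expect this uniform invertibility --- that the $k$ almost-touching bubbles do not, in the weighted/nonlocal setting, generate small eigenvalues spoiling the reduction --- to be the main obstacle.

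Finally I would expand the reduced energy: substituting $U=W_{r,\va}+\phi_{r,\va}$ into the energy functional of the extended problem produces a reduced functional $\mathcal E(r,\va)$ of the schematic form
\[
\mathcal E(r,\va)=k\Bigl(A_0-c_0\bigl(B_1|r-r_0|^m+B_2\va^m\bigr)+B_3\bigl(\tfrac{\va k}{r}\bigr)^{N-2s}+\text{l.o.t.}\Bigr),
\]
with constants $A_0,B_1,B_2,B_3>0$; under hypothesis ($K$) the signs of both the $c_0$-term and the interaction term are reversed and one takes positive bubbles. It is precisely this opposition of signs between the potential contribution $\pm c_0\va^m$ and the interaction $\mp B_3(\va k/r)^{N-2s}$ that ties positivity to ($K$) and sign change to ($K'$), and it forces the choice $\va\sim k^{-(N-2s)/(N-2s-m)}$, for which the two nonconstant terms become comparable. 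Condition \eqref{m} --- in particular $m<N-2s$, so that $\va^m$ dominates $\va^{N-2s}$ as $\va\to0$, and $m>N-2s-2(N-2s)^2/(N+2s)$, which makes the $\phi$-contribution genuinely lower order --- then guarantees that, for ($K'$), $\mathcal E$ restricted to a rectangle $(r,\va)\in[r_0-\rho,r_0+\rho]\times[\delta_1\va_k,\delta_2\va_k]$ has the structure of a saddle (a strict interior maximum along $r$ and a strict interior minimum along $\va$, the two roles interchanged for ($K$)), so that a minimax over the rectangle gives an interior critical point $(r_k,\va_k)$. A standard argument then shows that such a critical point of $\mathcal E$ produces a genuine solution $U_k$ of the extended problem, whence $u_k:=U_k(\cdot,0)$ solves \eqref{eq}; $u_k$ has $k$ bumps near $|x|=r_0$ and is therefore non-radial, it is positive under ($K$) and changes sign under ($K'$), and $\mathcal E(r_k,\va_k)=kA_0(1+o(1))\to\infty$, so the $u_k$ are mutually distinct for large $k$, which proves both theorems.
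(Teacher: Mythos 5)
Your proposal follows essentially the same route as the paper: a Lyapunov--Schmidt reduction around a large number of alternating bubbles placed on a regular polygon near $|x|=r_0$, uniform-in-$k$ solvability of the projected linearized problem in the symmetric class (resting on the D\'avila--del Pino--Sire non-degeneracy), and a saddle-type minimax in $(r,\va)$ for the reduced energy, whose sign reversal under $(K')$ relative to $(K)$ is exactly the mechanism used in the paper (Propositions \ref{propA.3}--\ref{propA.4}). The only deviations are implementational rather than substantive --- the paper works with the Riesz-kernel integral formulation and the weighted norms $\|\cdot\|_*$, $\|\cdot\|_{**}$ instead of the Caffarelli--Silvestre extension with energy-norm coercivity, and its expansion carries the quadratic term $\frac{B_1'}{\va^{m-2}\nu^m}(\nu r_0-r)^2$ rather than your schematic $|r-r_0|^m$ --- neither of which alters the saddle structure (maximum in $r$, minimum in $\va$ under $(K')$) on which your minimax argument, like the paper's flow/linking argument, is based.
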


For the sake of completeness, we firstly introduce basic theory on fraction Laplacian operator.

The nonlocal operator $(-\Delta)^s$ in $\r^N$ is defined on the Schwartz class through
the Fourier transform,
$$
\widehat{(-\Delta)^s f}(\xi) = (2\pi|\xi|)^{2s} \widehat{f(\xi)},
$$
or via the Riesz potential, see for example  (\cite{gt,p}) for the precise formula. As usual,
$\widehat{f}(\xi)$ denotes the Fourier Transform of $f$, $\widehat{f}(\xi) =\int_{\r^N}e^{-2\pi x\cdot\xi}f(x) dx.$
Observe that $s = 1$ corresponds to the standard local Laplacian. Since the fractional operator is nonlocal, L. Caffarelli and L. Silvestre showed in \cite{cs}
 that any fractional power of the Laplacian can be determined as an operator that maps a Dirichlet boundary condition
to a Neumann-type condition via an extension problem. To be more precise, consider the function
$u = u(x, y) : \r^N \times [0,\infty)\rightarrow\r,$ that solves the boundary value problem
\begin{equation} \label{2.2}
u(x,0)=f(x)
\end{equation}
and
\begin{equation} \label{2.3}
\Delta_x u+\frac{1-2s}{y}u_y+u_{yy}=0.
\end{equation}
Then, up to a multiplicative constant depending only on $s$,
$$
C(-\Delta)^sf=\lim_{y\rightarrow
0^+}-y^{1-2s}u_y=\frac{1}{2s}\lim_{y\rightarrow
0^+}\frac{u(x,y)-u(x,0)}{y^{2s}}.
$$
Thought the rest of this paper, the homogeneous fractional Sobolev space $D^s(\r^N) (0 < s < 1)$ is given by
$$
D^s(\r^N) =\Big \{u\in L^{\frac{2N}{N-2s}}(\r^N):\|u\|_{D^s(\r^N)}:=\Big(\int_{\r^N}|\xi|^{2s}|\widehat{u}(\xi)|^2\Big)^{\frac12}<+\infty\Big\}.
$$
 Note that $D^s(\r^N)$ is a Hilbert space equipped with an inner product
$$
\langle u,v\rangle_{D^s(\r^N)} = \int_{\r^N}|\xi|^{2s}\widehat{u}(\xi)\widehat{v}(\xi).
$$
We also define a fractional Laplace operator on the whole space, $(-\Delta)^s : D^s(\r^n)\rightarrow D^{-s}(\r^N)$ by
$$
\langle(-\Delta)^su, v\rangle_{D^{-s}(\r^N)} = \langle u, v\rangle_{D^s(\r^N)} ,
$$
where $D^{-s}(\r^N)$ is the dual of $D^s(\r^N)$. Then, one can easily check that if $u \in D^{2s}(\r^N)$, we have
$(-\Delta)^su \in L^2(\r^N)$ such that
$$
(-\Delta)^su = \mathfrak{F}^{-1}[|\xi|^{2s}\widehat{u}(\xi)]
$$
where $\mathfrak{F}^{-1}$ denotes the inverse Fourier transform so that we see for $u,v\in D^s(\r^n)$
$$
\langle u, v\rangle_{D^s(\r^N)} =\int_{\r^N}(-\Delta)^{\frac s2}u \cdot(-\Delta)^{\frac s2}v
$$
and assuming additionally $u \in  D^{2s}(\r^N), v\in L^2(\r^N)$, we can integrate by parts:
$$
\int_{\r^N}(-\Delta)^{\frac s2}u \cdot(-\Delta)^{\frac s2}v=\int_{\r^N}(-\Delta)^su\cdot v.
$$

In what remains of this paper, we will always mean that the equation
$$
(-\Delta)^s u=f\ \ \ in\ \r^N,
$$
is satisfied if
\begin{equation}\label{eq2}
u(x)=(-\Delta)^{-s}f(x):=\frac{1}{\gamma(N,s)}\int_{\r^N}\frac{f(y)}{|x-y|^{N-2s}}dy,
\end{equation}
as long as $f$ has enough decay for the integral to be well defined. The constant $\gamma(N,s)=\frac{\pi^{\frac N2}2^{2s}\Gamma(s)}{\Gamma(\frac N2-s)}.$
There are other definitions of $(-\Delta)^s u$, which are equivalent to \eqref{eq2} under suitable assumptions, see \cite{cs,stein}.

Lieb in 1983 \cite{l} (also see \cite{fl1,fl2}) established that the extremals correspond precisely
to functions of the form
\begin{equation}\label{2.3}
U_{\va,\xi}(x)=\alpha\Big(\frac{\va}{\va^2+|x-\xi|^2}\Big)^{\frac{N-2s}{2}},\ \ \alpha>0,
\end{equation}
which for a suitable choice
$$
\alpha=\alpha_{N,s}=2^{\frac{N-2s}{2}}\frac{\Gamma(\frac{N+2s}{2})}{\Gamma(\frac{N+2s}{2})}
$$
 solve the equation
\begin{equation}\label{2.4}
(-\Delta)^s u=u^{\frac{N+2s}{N-2s}},\ \ \ u>0,\ \ in\ \ \  \r^N.
\end{equation}

Very recently, J. D\'{A}vila, M. del Pino and Y. Sire \cite{dps} obtained the
non-degeneracy of the solutions for \eqref{2.4} in arbitrary
dimension $N>2s$.

So, we know that $U_{\va,\xi}(x)$ is non-degenerate in
$D^s(\r^{N})$. More precisely, define the functional
corresponding to \eqref{2.4} as follows
$$
f_0(u)=\frac{1}2\int_{\r^N}|(-\Delta)^{\frac s2}
u|^2-\frac1{2^*_s}\int_{\r^N}|u|^{2^*_s},\,\,\,u\in D^{2s}(\r^N).
$$
Then $f_0$ possesses a finite-dimensional
manifold $Z$ of least energy critical points, given by
$$
Z=\{U_{\va,\xi}:\va>0,\,\,\xi\in\r^{N}\}.
$$
Moreover,
$$
ker f''_0(z)=span_{\r}\left\{\frac{\partial U_{\va,\xi}}{\partial \xi_1},\cdots,
\frac{\partial U_{\va,\xi}}{\partial \xi_{N}},\,\frac{\partial U_{\va,\xi}}{\partial
\va}\right\},\,\,\,\forall\,\,U_{\xi,\va}\in Z.
$$

The following idea to prove our main results is essentially from \cite{wy1}.

 We will use the solution
$$
U_{\va,\xi}(x)=\alpha_{N,s}\Big(\frac{\va}{\va^2+|x-\xi|^2}\Big)^{\frac{N-2s}{2}},
$$
of
\begin{equation}\label{eq1}
 (-\Delta)^s u=u^{2^*_s-1},\ u>0,\,\,x \in \r^N,\,\,u(0)=\max\limits_{\r^N} u(x)
\end{equation}
to build up the approximate solutions for \eqref{eq}.
 From \cite{l,cl,fl1,fl2}, we see that when $s\in(0,1)$, the
unique ground solution of \eqref{eq1} decays like
$\frac{1}{|x|^{N-2s}}$ when $|x|\rightarrow \infty.$

Set
$$\nu=k^{\frac{N-2s}{N-2s-m}},
$$
to be the scaling parameter.
Using the transformation $u(x)\rightarrow\nu^{\frac{N-2s}{2}}u(\frac {x}{\nu})$, we find that \eqref{eq} becomes

$$
(-\Delta)^s u = K\big(\frac{x}{\nu}\big)u^{2^*_s-1},\ \ \ u>0,\ \ \ x\in\r^N.
$$

Throughout this paper, we always assume that
$$
r\in\Big[r_0\nu-\frac{1}{\nu^{\bar\theta}},r_0\nu+\frac{1}{\nu^{\bar\theta}}\Big],\ \ for\  some\  small \ \bar\theta >0,
$$
and
$$
\va_0\leq\va\leq\va_1, \ \ for\  some\  constants\  \va_1 >\va_0> 0.
$$

Set $x=(x',x'')\in  \r^2\times\r^{N-2}.$
Define
\begin{equation*} \begin{split}
\mathscr{H}=\Bigl\{u: &u\in D^{2s}(\r^N), u\ \mbox{is even in}\ x_j, j=2,\cdots,N,\\
&u(r\cos\theta, r\sin\theta,x'')=
u(r\cos(\theta+\frac{2i\pi}{k}),r\sin(\theta+\frac{2i\pi}{k}),x'')\Bigr\}.\\
\end{split} \end{equation*}
Write
$$
x^i=\big(r\cos\frac{2(i-1)\pi}{k},r\sin\frac{2(i-1)\pi}{k},0\big),\
\ i=1,\cdots,k,
$$
where $0$ is the zero vector in $\r^{N-2}$. Denote $$U_{\va,r}(x)=\sum_{i=1}^{k}U_{\va,x^i}(x).$$

To prove Theorem \ref{th1}, it suffices to verify the following result:
\begin{theorem}\label{th3}
Under the assumption of Theorem~\eqref{th1}, there is an
integer $k_0>0$, such that for any integer $k\geq k_0$, \eqref{eq}
has a solution $u_k$ of the form
$$
u_k=U_{\va,r}(x)+\omega_r
$$
where~~$\omega_r\in \mathscr{H}$,
~$r_k\in\Big[r_0\nu-\frac{1}{\nu^{\bar\theta}},r_0\nu+\frac{1}{\nu^{\bar\theta}}\Big]$ for some constants
$\bar\theta>0$
and
$\va_0\leq\va\leq\va_1$.

\end{theorem}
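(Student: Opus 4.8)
The plan is to prove Theorem~\ref{th3} by a Lyapunov--Schmidt finite-dimensional reduction, in the spirit of \cite{wy1} but adapted to the nonlocal operator $(-\Delta)^s$ and carried out inside the symmetric class $\mathscr{H}$. After the rescaling $u\mapsto\nu^{\frac{N-2s}{2}}u(\cdot/\nu)$, equation \eqref{eq} becomes $(-\Delta)^s u=K(x/\nu)u^{2^*_s-1}$, and it suffices to produce, for every large $k$, a solution of the form $u=U_{\va,r}+\omega$ with $\omega\in\mathscr{H}$ small and $D^s$-orthogonal to the two-dimensional approximate kernel $\mathrm{span}\{Z_1,Z_2\}$, where $Z_1:=\partial_\va U_{\va,r}$ and $Z_2:=\partial_r U_{\va,r}$; every other kernel direction of a single bubble is killed by the rotational and even symmetries built into $\mathscr{H}$, which is exactly why only the two parameters $(\va,r)$ survive. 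First I would fix weighted $L^\infty$-type norms $\|\cdot\|_*,\|\cdot\|_{**}$ modeled on the decay $|x|^{-(N-2s)}$ of the bubbles, realize $(-\Delta)^{-s}$ through the Riesz potential \eqref{eq2}, and record the interaction estimate $\int_{\r^N}U_{\va,x^i}^{2^*_s-1}U_{\va,x^j}\sim\va^{N-2s}|x^i-x^j|^{-(N-2s)}$ together with the separation $|x^i-x^{i+1}|\sim\nu/k=k^{m/(N-2s-m)}\to\infty$.

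The next step is the linear theory: for admissible $(\va,r)$, solve the projected linear equation for $L_k\phi:=\phi-(-\Delta)^{-s}\big[(2^*_s-1)K(x/\nu)U_{\va,r}^{2^*_s-2}\phi\big]$ on $\{\phi\in\mathscr{H}:\langle\phi,Z_j\rangle_{D^s}=0,\ j=1,2\}$, and prove $\|L_k\phi\|_*\ge c\,\|\phi\|_*$ with $c>0$ independent of $k\ge k_0$. This is the usual contradiction argument: a renormalized sequence of near-kernel elements, recentered at a suitable $x^{i_k}$, would converge to a bounded nontrivial solution of $\psi=(-\Delta)^{-s}\big[(2^*_s-1)U_{1,0}^{2^*_s-2}\psi\big]$ orthogonal to $\partial_{\xi_1}U_{1,0},\dots,\partial_{\xi_N}U_{1,0},\partial_\va U_{1,0}$, contradicting the non-degeneracy theorem of D\'avila--del Pino--Sire quoted above; the fact that the centers are far apart makes the cross interactions negligible in $\|\cdot\|_*$. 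One thus obtains a bounded right inverse, uniformly in $k$.

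Then I would estimate the error $E_k:=U_{\va,r}-(-\Delta)^{-s}\big[K(x/\nu)U_{\va,r}^{2^*_s-1}\big]$ in $\|\cdot\|_{**}$: it splits into a bubble--bubble interaction part $\sim(\nu/k)^{-(N-2s)}$ and a potential part coming from $K(x/\nu)-1$ and hypothesis $(K)$, of size $\sim(\va/\nu)^m$; by the choice of $\nu$ both are comparable to $\Lambda:=k^{1-\frac{m(N-2s)}{N-2s-m}}\to0$, and the range \eqref{m} is exactly what forces $\|E_k\|_{**}$ to dominate all the remainders below. Combining the linear theory with a contraction mapping in a ball of radius $O(\|E_k\|_{**})$ yields a unique $\omega=\omega_{\va,r}\in\mathscr{H}$, depending $C^1$ on $(\va,r)$, that solves the equation modulo $\mathrm{span}\{Z_1,Z_2\}$; by the standard variational reduction, $u=U_{\va,r}+\omega_{\va,r}$ is then a genuine solution of the rescaled equation if and only if $(\va,r)$ is a critical point of $F(\va,r):=I(U_{\va,r}+\omega_{\va,r})$, where $I(u)=\frac12\|(-\Delta)^{s/2}u\|_{L^2(\r^N)}^2-\frac1{2^*_s}\int_{\r^N}K(x/\nu)|u|^{2^*_s}$. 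Expanding $F$, using $f_0(U_{1,0})=:A$, the interaction estimates, and the Taylor expansion of $K$ at $r_0$, I expect (in the $C^1$ sense, on the admissible region)
\begin{equation*}
F(\va,r)=kA+\Lambda\Big[\tfrac{c_0}{2^*_s}\,\va^{m}\,g\!\big(\tfrac{r-r_0\nu}{\va}\big)-B\,\va^{N-2s}\Big]+o(\Lambda),
\end{equation*}
where $B>0$ collects the adjacent-bubble interaction constant and the geometric factor $(2\pi r_0)^{-(N-2s)}$, and $g(\tau):=\int_{\r^N}|y_1+\tau|^{m}U_{1,0}^{2^*_s}\,dy$ is even and strictly convex, hence has a strict interior minimum at $\tau=0$. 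Since $m<N-2s$, the bracketed function is strictly concave in $\va$, with interior maximizer $\va_*$ determined by $\va_*^{\,N-2s-m}=\frac{c_0 m\,g(0)}{2^*_s B(N-2s)}$ (so one picks $\va_0<\va_*<\va_1$), and strictly convex in the variable $\tau=(r-r_0\nu)/\va$, whose minimizer stays in the interior of the admissible window of $r$ once $0<\bar\theta<1$. Hence $\max_{\va\in[\va_0,\va_1]}\min_{r}F(\va,r)$ is attained at an interior point $(\va_k,r_k)$, where $\partial_rF=0$ (interior inner minimum) and $\partial_\va F=0$ (since $\va_k$ maximizes $\va\mapsto\min_rF(\va,\cdot)$, whose inner minimizer depends $C^1$ on $\va$ by the strict convexity); so $(\va_k,r_k)$ is a critical point of $F$, and it persists for all large $k$ because $F$ is $C^1$-close to its nondegenerate leading saddle. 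This produces $u_k=U_{\va,r}+\omega_r$ as claimed, and Theorem~\ref{th1} follows by undoing the rescaling.

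The step I expect to be the main obstacle is the linear theory together with the error estimates, because of the \emph{nonlocality}: unlike the case $s=1$, there is no pointwise maximum principle for $(-\Delta)^s$ on bounded regions, so all decay, interaction and projection estimates must be extracted from the Riesz kernel $|x-y|^{-(N-2s)}$ or from the Caffarelli--Silvestre extension, and the weighted norms must be tuned so that $(-\Delta)^{-s}$ maps $\|\cdot\|_{**}$ boundedly into $\|\cdot\|_*$ while the projection onto $\mathrm{span}\{Z_1,Z_2\}$ stays controlled uniformly in $k$. The bookkeeping that turns \eqref{m} into ``interaction term, potential term and error are all of order $\Lambda$, everything else is $o(\Lambda)$'' is the quantitative heart of the whole argument.
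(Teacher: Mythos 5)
Your reduction scheme coincides with the paper's (weighted norms $\|\cdot\|_*,\|\cdot\|_{**}$, inversion through the Riesz potential, linear theory by contradiction plus the D\'avila--del Pino--Sire non-degeneracy, contraction mapping, then a reduced functional $F(r,\va)$), and your predicted leading behaviour of $F$ agrees qualitatively with Propositions \ref{prop4.1}--\ref{prop4.2}: under $(K)$ the reduced energy is a saddle, with a strict minimum in $r$ near $\nu r_0$ and a strict maximum in $\va$ at $\va_0=\big(\tfrac{B_3(N-2s)}{B_0mr_0^{N-2s}}\big)^{\frac{1}{N-2s-m}}$. The genuine gap is in how you convert this saddle into a critical point. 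Your max--min argument rests on two claims that are not justified: (i) that the inner minimizer in $r$ is unique and depends $C^1$ on $\va$ ``by strict convexity'', and (ii) that the critical point ``persists because $F$ is $C^1$-close to its nondegenerate leading saddle''. Strict convexity of the leading term does not transfer to $F$: the remainders in the expansion are controlled only at the level of $F$ itself and of $\partial_\va F$, never in $C^2$, so the minimizing set in $r$ may be neither a singleton nor smooth in $\va$; a Danskin-type argument then only produces minimizers $r^{\pm}$ with $\partial_\va F(\va_k,r^{\pm})$ of opposite signs, not a zero of the gradient. One could instead try a degree (Miranda-type) argument, but that requires an expansion of \emph{both} partial derivatives, in particular of $\partial_r F$, with errors beaten by the leading gradient on the boundary of the (anisotropic) parameter box; this is precisely what is not established -- note the paper expands only $F$ and $\partial_\va F$ and never $\partial_r F$, because differentiating the interaction sums and $\varphi(r,\va)$ in $r$ with the required precision is delicate.

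The paper avoids this by a purely topological argument on the box $\Omega$ with $|r-\nu r_0|\le \nu^{-\bar\theta}$ and $|\va-\va_0|\le \nu^{-\frac32\bar\theta}$ (note the two different scales): by \eqref{F'}, $\partial_\va\tilde F$ has a definite sign on the two $\va$-faces, so the negative gradient flow cannot leave through them (Lemma \ref{lm4.3}), while \eqref{F} gives the value estimate \eqref{4.6} on the $r$-faces; the min--max value $c=\inf_{g\in\Gamma}\max_{\Omega}\tilde F(g)$ over maps fixing the $r$-boundary then satisfies $\alpha_1<c<\alpha_2$ and is a critical value. This uses only $C^0$ information on $F$ plus the $\va$-derivative, which is exactly why the thinner $\va$-window is introduced. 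To repair your proof you must either establish a true $C^1$ expansion in $r$ (with error $o(\nu^{-m-\bar\theta})$ on the $r$-boundary) or replace the max--min by such a linking/flow argument. A further, more minor, point: your claim $\|E_k\|_{**}\sim\Lambda$ is too strong; in these weighted norms one only gets $\|l_k\|_{**}\le C\nu^{-\frac m2-\sigma}$ (Lemma \ref{lm3.4}), and the expansion of $F$ is saved because the induced correction is the product $k(\|N(\varphi)\|_{**}+\|l_k\|_{**})\|\varphi\|_*=O(k\nu^{-m-\epsilon})$, not $\|l_k\|_{**}$ itself; this does not break your scheme but it changes the bookkeeping of what is $o(k\nu^{-m})$.
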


To consider the sign-changing solutions, for any integer $k>0$, we define
$$
\bar x^i=\big(r\cos\frac{(i-1)\pi}{k},r\sin\frac{(i-1)\pi}{k},0\big),\
\ i=1,\cdots,2k,
$$
write $$\bar U_{r,\va}(x)=\sum_{i=1}^{2k}(-1)^{i-1}U_{\bar x^i}(x).$$

Denote
\begin{equation*} \begin{split}
\mathscr{H'}=\Bigl\{u: &u\in D^{2s}(\r^N), u\ \mbox{is even in}\ x_j, j=2,\cdots,N,\\
&u(r\cos\theta, r\sin\theta,x'')=(-1)^i
u(r\cos(\theta+\frac{i\pi}{k}),r\sin(\theta+\frac{i\pi}{k}),x'')\Bigr\}.\\
\end{split} \end{equation*}

To prove Theorem~ \ref{th2}, we only need to show that:

\begin{theorem}\label{th4}
Under the assumption of Theorem~\eqref{th1}, there is an
integer $k_0>0$, such that for any integer $k\geq k_0$, \eqref{eq}
has a solution $u_k$ of the form
$$
\bar u_k=\bar U_{\va,r}(x)+\bar\omega_r
$$
where~~$\bar\omega_r\in \mathscr{H'}$,
~$r_k\in\Big[r_0\nu-\frac{1}{\nu^{\bar\theta}},r_0\nu+\frac{1}{\nu^{\bar\theta}}\Big]$ for some constants
$\bar\theta>0$
and
$\va_0\leq\va\leq\va_1$.

\end{theorem}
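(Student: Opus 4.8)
The plan is to prove Theorem~\ref{th4} by a finite–dimensional Lyapunov–Schmidt reduction carried out in the sign–changing symmetry class $\mathscr{H'}$, exactly parallel to the construction behind Theorem~\ref{th3} but with the $k$ equal bumps replaced by the $2k$ alternating bumps making up $\bar U_{\va,r}=\sum_{i=1}^{2k}(-1)^{i-1}U_{\bar x^i}$. As in the rescaling introduced in the excerpt one works with
$$
(-\Delta)^s u = K\!\big(\tfrac{x}{\nu}\big)|u|^{2^*_s-2}u,\qquad \nu=k^{\frac{N-2s}{N-2s-m}},
$$
with associated energy $I_\nu$; a critical point of $I_\nu$ in $\mathscr{H'}$ of the form $\bar U_{\va,r}+\bar\omega_r$, with $\bar\omega_r\in\mathscr{H'}$ orthogonal to the approximate kernel, yields after undoing the scaling the solution $u_k$ of~\eqref{eq} claimed in the statement (and, the scaling being a symmetry of the $D^s$–norm and of $\int|u|^{2^*_s}$, $I(u_k)=I_\nu(\bar U_{\va,r}+\bar\omega_r)\approx 2kA_0\to\infty$, so distinct $k$ give distinct solutions). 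Since $\nu\to\infty$ and the centres satisfy $|\bar x^i-\bar x^{i\pm1}|=2r\sin\frac{\pi}{2k}\gtrsim\nu/k\to\infty$, the $2k$ bubbles are well separated, and I would measure everything in Wei–Yan type weighted $L^\infty$ norms,
$$
\|\phi\|_{*}=\sup_{x\in\r^N}\Big(\sum_{i=1}^{2k}\frac{\va^{\frac{N-2s}{2}}}{(1+|x-\bar x^i|)^{\frac{N-2s}{2}+\sigma}}\Big)^{-1}|\phi(x)|,
$$
for small $\sigma>0$ adapted to the polynomial decay $|x|^{-(N-2s)}$ of the fractional bubble and to the Riesz kernel in~\eqref{eq2}, together with a dual norm $\|\cdot\|_{**}$ on right–hand sides.

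First I would fix $\va\in[\va_0,\va_1]$, $r\in[r_0\nu-\nu^{-\bar\theta},r_0\nu+\nu^{-\bar\theta}]$ and estimate in $\|\cdot\|_{**}$ the error $l_k:=(-\Delta)^s\bar U_{\va,r}-K(\tfrac{\cdot}{\nu})|\bar U_{\va,r}|^{2^*_s-2}\bar U_{\va,r}$; using $(-\Delta)^sU_{\bar x^i}=U_{\bar x^i}^{2^*_s-1}$ this splits into a pure interaction part and the potential part $(1-K(\tfrac{\cdot}{\nu}))|\bar U_{\va,r}|^{2^*_s-2}\bar U_{\va,r}$, both controlled by the separation of the centres and by $(K')$; this is where the constraint~\eqref{m} on $m$ and the smallness of $\bar\theta$ are used to keep the bound suitably small. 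Next comes the linear theory: the operator $L_k\phi:=(-\Delta)^s\phi-(2^*_s-1)K(\tfrac{\cdot}{\nu})|\bar U_{\va,r}|^{2^*_s-2}\phi$, restricted to $E_{\va,r}:=\{\phi\in\mathscr{H'}:\langle\phi,\partial_\va U_{\bar x^1}\rangle_{D^s}=\langle\phi,\partial_r U_{\bar x^1}\rangle_{D^s}=0\}$ — the full orthogonality for all $i$ being automatic from the $\mathscr{H'}$–symmetry — should satisfy $\|L_k\phi\|_{**}\ge c\,\|\phi\|_{*}$ with $c>0$ independent of $k$. I would prove this by the usual blow–up contradiction: a failing sequence, re–centred at one bubble, converges to a bounded nontrivial $\psi\in D^s(\r^N)$ solving $(-\Delta)^s\psi=(2^*_s-1)U^{2^*_s-2}\psi$, even in $x_2,\dots,x_N$ (from $\mathscr{H'}$) and with $\psi\perp\partial_\va U$, $\psi\perp\partial_{\xi_1}U$ (from the orthogonality constraints), which together force $\psi\equiv 0$ by the non–degeneracy of the fractional bubble in~\cite{dps}. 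With these ingredients the projection of the equation onto $E_{\va,r}$ is solved by the contraction mapping principle in a small $\|\cdot\|_{*}$–ball, producing $\bar\omega_{\va,r}\in E_{\va,r}$ with $\|\bar\omega_{\va,r}\|_{*}\lesssim\|l_k\|_{**}$ and $C^1$ dependence on $(\va,r)$.

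It then remains to find a critical point of the reduced functional $\mathcal A(\va,r):=I_\nu(\bar U_{\va,r}+\bar\omega_{\va,r})$ in the interior of the parameter rectangle. Expanding,
$$
\mathcal A(\va,r)=2kA_0+k\Big(a_k(r)\,\va^{\,N-2s}-b_k(r)\,\va^{\,m}\Big)+o\big(k\,\nu^{-m}\big),
$$
where $A_0=\big(\tfrac12-\tfrac1{2^*_s}\big)\int U^{2^*_s}$, the term $a_k(r)\sim(k/\nu r)^{N-2s}$ comes from the \emph{repulsive} interaction of adjacent opposite–sign bubbles ($\int U_{\bar x^i}^{2^*_s-1}U_{\bar x^{i\pm1}}\sim a\,\va^{N-2s}|\bar x^i-\bar x^{i\pm1}|^{-(N-2s)}$, which enters $\mathcal A$ with positive sign), and $b_k(r)\sim\nu^{-m}$ comes from the potential deficit $-\tfrac{c_0}{2^*_s}\int\big||x|/\nu-r_0\big|^m|\bar U_{\va,r}|^{2^*_s}$, which is \emph{negative} by $(K')$. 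The choice $\nu=k^{(N-2s)/(N-2s-m)}$ makes $a_k$ and $b_k$ comparable, so (using $m<N-2s$) the map $\va\mapsto a_k\va^{N-2s}-b_k\va^m$ has a strict interior \emph{minimum} at $\va_*=\big(mb_k/((N-2s)a_k)\big)^{1/(N-2s-m)}$, an $O(1)$ number that lies in $[\va_0,\va_1]$ for an appropriate a priori choice of the endpoints; and the $r$–dependence — through $|\bar x^i-\bar x^{i\pm1}|$ and through the even function $\sigma\mapsto\int U^{2^*_s}(z)|\sigma+\hat x^1\!\cdot z|^m\,dz$ — is, in the rescaled variable $t=r-r_0\nu$, a strictly concave perturbation of $\mathcal A$ maximised at an interior point with $|t|=O(1/\nu)<\nu^{-\bar\theta}$. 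A degree or min–max argument then delivers a critical point $(\va_k,r_k)$ in the interior; since $\bar\omega_{r_k}$ is small in $L^\infty$, consecutive bubbles keep opposite signs, so the solution is sign–changing. Carrying this out for every $k\ge k_0$ proves Theorem~\ref{th4}, hence Theorem~\ref{th2}.

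The step I expect to be the main obstacle is the combination of the interaction estimates and the linear theory in the \emph{nonlocal} setting. Because $(-\Delta)^s$ is not local, the Green representation~\eqref{eq2} couples all $2k$ bubbles through the slowly decaying kernel $|x-y|^{-(N-2s)}$, so obtaining both the interaction rate $\int U_{\bar x^i}^{2^*_s-1}U_{\bar x^j}\sim C|\bar x^i-\bar x^j|^{-(N-2s)}$ and, more delicately, a uniform–in–$k$ bound for the cross–sums $\sum_{j\ne i}\va^{(N-2s)/2}(1+|x-\bar x^j|)^{-(N-2s)/2-\sigma}$ inside $\|\cdot\|_{**}$ forces a careful choice of the weight exponent $\sigma$; the admissible range of $\sigma$ is exactly what produces the lower bound $m>\max\{2,\,N-2s-2(N-2s)^2/(N+2s)\}$ recorded in~\eqref{m}. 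The other delicate point is running the contradiction argument for $L_k$ so that the limit lands in $D^s(\r^N)$ with precisely the symmetries needed to invoke~\cite{dps}. Once these two estimates are secured, the contraction mapping and the finite–dimensional variational argument are routine.
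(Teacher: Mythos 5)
Your proposal is correct and follows essentially the same route as the paper: the same Lyapunov--Schmidt reduction in the symmetry class $\mathscr{H'}$ with Wei--Yan weighted norms and the nondegeneracy result of \cite{dps}, the same sign-flipped energy expansion (negative potential term from $(K')$, positive repulsive interaction of adjacent opposite-sign bubbles, as in Propositions \ref{propA.3}--\ref{propA.4}), and the same saddle-point/min--max argument in $(r,\va)$ used for Theorem \ref{th3}. No substantive differences or gaps to report.
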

The rest of the paper is organized as follows. In Sect.2, we will carry out a
reduction procedure. We prove our main result in Section 3. Finally, in Appendix, some basic estimates
and an energy expansion for the functional
corresponding to problem \eqref{eq} will be established.

\section{Finite-dimension Reduction}
 In this section, we perform a finite-dimensional reduction. Let
\begin{equation}\label{3.1}
\|u\|_{*}=\sup_{x\in\r^{N}}\Big(\sum_{i=1}^{k}\frac{1}{(1+|x-x^i|)^{\frac{N-2s}{2}+\tau}}\Big)^{-1}|u(x)|
\end{equation}
and
\begin{equation}\label{3.2}
\|f\|_{**}=\sup_{x\in\r^{N}}\Big(\sum_{i=1}^{n}\frac{1}{(1+|x-x^i|)^{\frac{N+2s}{2}+\tau}}\Big)^{-1}|f(x)|,
\end{equation}
where $\tau=\frac{N-2s-m}{N-2s}.$
Write
$$
Z_{i,1}=\frac{\partial U_{x^i,\va}}{\partial r},\ \ \ Z_{i,2}=\frac{\partial U_{x^i,\va}}{\partial \va}.
$$
Consider
 \begin{equation}\label{3.3}
\left\{%
\begin{array}{ll}
    (-\Delta)^s \varphi_{k}-(2^*_s-1)K\big(\frac{|x|}{\nu}\big)\ds U_{r,\va}^{2^*_s-2} \varphi_{k}=
    H_{k}+\ds\sum_{l=1}^{2}c_{l}\ds\sum_{i=1}^{k} U_{x^{i},\va}^{2^*_s-2} Z_{i,l}, ~~&  x\in {\r}^{N}, \vspace{0.2cm}\\
   \varphi_{k}\in \mathscr{H},\vspace{0.2cm}\\
  \Big \langle \ds U_{x^{i},\va}^{2^*_s-2} Z_{i,l},\varphi_{k}\Big\rangle=0,\ \ i=1,...,k,l=1,2,
\end{array}%
\right.
\end{equation}
for some numbers $c_l$, where
$
\langle u,v\rangle= \int_{\r^N} uv.
$

 \begin{lemma}\label{lm3.1}
 Suppose that $\varphi_{k}$ solves \eqref{3.3} for $H=H_{k}$. If $\|H_{k}\|_{**}$ goes to zero as $k$ goes to infinity, so does $\|\varphi_{k}\|_{*}$.
 \end{lemma}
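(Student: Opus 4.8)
The plan is to argue by contradiction, following the standard blow-up scheme for such linearized equations around a sum of bubbles. Suppose the conclusion fails: then there exist sequences $k\to\infty$, $\varphi_k$ solving \eqref{3.3} with $H=H_k$, such that $\|H_k\|_{**}\to 0$ but $\|\varphi_k\|_*\geq c>0$. After normalizing we may assume $\|\varphi_k\|_*=1$, so that $\|H_k\|_{**}\to 0$. The first step is to convert the equation into an integral identity using the Green representation \eqref{eq2}: from the first line of \eqref{3.3},
\begin{equation*}
\varphi_k(x)=(-\Delta)^{-s}\Bigl[(2^*_s-1)K\bigl(\tfrac{|x|}{\nu}\bigr)U_{r,\va}^{2^*_s-2}\varphi_k+H_k+\sum_{l=1}^{2}c_l\sum_{i=1}^{k}U_{x^i,\va}^{2^*_s-2}Z_{i,l}\Bigr],
\end{equation*}
and then estimate each term against the weight $\sum_i (1+|x-x^i|)^{-(N-2s)/2-\tau}$ appearing in $\|\cdot\|_*$. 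The key analytic input is a pointwise convolution estimate: for the chosen $\tau=\frac{N-2s-m}{N-2s}$ one has
\begin{equation*}
\int_{\r^N}\frac{1}{|x-y|^{N-2s}}\sum_{i=1}^k\frac{1}{(1+|y-x^i|)^{\frac{N+2s}{2}+\tau}}\,dy\ \leq\ C\sum_{i=1}^k\frac{1}{(1+|x-x^i|)^{\frac{N-2s}{2}+\tau}},
\end{equation*}
which should be available among the Appendix estimates; combined with the decay $U_{r,\va}^{2^*_s-2}(x)\lesssim\sum_i(1+|x-x^i|)^{-2s}$ (up to the interaction terms between bubbles, controlled by the separation $|x^i-x^j|\sim \nu/k$), this lets me bound the contribution of the first term by $o(1)\|\varphi_k\|_*$ away from a fixed neighborhood of the bubbles.

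The second step handles the Lagrange multipliers $c_l$. Testing \eqref{3.3} against $Z_{j,l}$, using the orthogonality conditions $\langle U_{x^i,\va}^{2^*_s-2}Z_{i,l},\varphi_k\rangle=0$ and the near-orthogonality and invertibility of the Gram matrix $\langle U_{x^i,\va}^{2^*_s-2}Z_{i,l},Z_{j,l'}\rangle$ (block-diagonal up to lower order by the bubble separation), I get $|c_l|\leq C\bigl(\|H_k\|_{**}+o(1)\|\varphi_k\|_*\bigr)$; this uses that $\langle U^{2^*_s-2}_{x^i,\va}Z_{i,l}, (-\Delta)^s\varphi_k - (2^*_s-1)K U_{r,\va}^{2^*_s-2}\varphi_k\rangle$ is small because $Z_{i,l}$ lies (almost) in the kernel of the unperturbed linearized operator and because $K(|x|/\nu)$ is close to $K(r_0)=1$ on the relevant region. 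Inserting this back, the whole right-hand side of the integral identity is $o(1)$ in $\|\cdot\|_*$ outside a large ball $B_R(x^i)$ around each bubble.

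The third and decisive step is the local analysis near the bubbles: since $\|\varphi_k\|_*=1$, there is $i_k$ and a point within bounded distance of $x^{i_k}$ where $|\varphi_k|$ times the weight is bounded below. Define $\tilde\varphi_k(y)=\va^{(N-2s)/2}\varphi_k(x^{i_k}+\va y)$; by the bounds already obtained and elliptic/regularity estimates for $(-\Delta)^s$, $\tilde\varphi_k$ is bounded in $C^{0}_{loc}$ and converges (along a subsequence) to a bounded solution $\tilde\varphi$ of the limiting linearized equation $(-\Delta)^s\tilde\varphi=(2^*_s-1)U_{1,0}^{2^*_s-2}\tilde\varphi$ on $\r^N$, satisfying the orthogonality relations $\langle U_{1,0}^{2^*_s-2}\partial_{\va}U_{1,0},\tilde\varphi\rangle=\langle U_{1,0}^{2^*_s-2}\partial_{\xi_j}U_{1,0},\tilde\varphi\rangle=0$. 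By the non-degeneracy result of D\'avila--del Pino--Sire quoted in the excerpt, $\tilde\varphi\equiv 0$, contradicting the lower bound at the chosen point. The main obstacle I expect is precisely the bookkeeping in steps one and two: making the convolution/weight estimates uniform in $k$ while keeping track of the bubble-interaction terms, and showing the error from replacing $K(|x|/\nu)$ by $1$ and from the cross terms $U_{x^i,\va}^{2^*_s-2}$ evaluated near $x^j$ is genuinely $o(1)\|\varphi_k\|_*$ — this is where the lower bound on $m$ in \eqref{m} and the precise value of $\tau$ are used, and getting the exponents to close is the delicate point.
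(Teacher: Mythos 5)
Your proposal is correct and follows essentially the same route as the paper's own proof: contradiction with the normalization $\|\varphi_k\|_*=1$, the Green-function representation \eqref{eq2} combined with the weighted convolution estimates of Lemmas \ref{lmA.2}--\ref{lmA.3}, the estimate of the multipliers $c_l$ by testing against $Z_{1,t}$ and exploiting that $Z_{1,t}$ nearly annihilates the linearized operator while $K(|x|/\nu)-1$ is small, and finally the translation/compactness argument reducing to the non-degeneracy of the bubble due to D\'avila--del Pino--Sire. The only cosmetic difference is your extra rescaling by $\va$, which is immaterial since $\va\in[\va_0,\va_1]$ is bounded.
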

\begin{proof}
We will  argue by an indirect method. Suppose to the contrary that
there exist $k\rightarrow
+\infty,H=H_{k},\va_{k}\in[\va_{0},\va_{1}],r_k\in\left[r_{0}\nu-\nu^{-\bar\theta},\,r_{0}\nu+\nu^{-\bar\theta}\right]$
and $\varphi_{k}$ solving \eqref{3.3} for
$H=H_{k},\va=\va_{k},r=r_{k}$ with
$\|H_{k}\|_{**}\rightarrow0$ and $\|\varphi_{k}\|_{*}\geq c>0.$ We
may assume that $\|\varphi_{k}\|_{*}=1$. For simplicity, we drop the
subscript $k$.
By \eqref{eq2}, we can rewrite \eqref{3.3} as
 \begin{equation}\label{3.4}
\begin{array}{ll}
\varphi(x)= &\frac{1}{\gamma(N,s)}(2^*_s-1)\ds\int_{\r^N} \frac{1}{|y-x|^{N-2s}}K
\Big(\frac{|y|}{\nu}\Big)U_{r,\va}^{2^*_s-2}(y)\varphi(y)dy\vspace{0.2cm}\\
&+
\frac{1}{\gamma(N,s)}\ds\int_{\r^N}\frac{1}{|y-x|^{N-2s}}\Big( H(y)+\ds\sum_{l=1}^{2}c_{l}\ds\sum_{i=1}^{k}U_{x_{i},\va}^{2^*_s-2}(y)Z_{i,l}(y)\Big)dy,
 \end{array}
\end{equation}
Analogously to Lemma \ref{lmA.3},  we have
\begin{equation}\label{2.5}
\begin{array}{ll}
&\left|\ds\int_{\r^N} \frac{1}{|y-x|^{N-2s}}K
\Big(\frac{|y|}{\nu}\Big)U_{r,\va}^{2^*_s-2}(y)\varphi(y)dy\right|\vspace{0.2cm}\\
&\leq C\|\varphi\|_{*}\ds\int_{\r^N}\frac{1}{|y-x|^{N-2s}}U_{r,\va}^{2^*_s-2}(y)
\sum_{i=1}^{k}\frac{1}{(1+|y-x^{i}|)^{\frac{N-2s}{2}+\tau}}dy\vspace{0.2cm}\\
&\leq  C\|\varphi\|_{*}\ds\sum_{i=1}^{k}\frac{1}{(1+|x-x^i|)^{\frac{N-2s}{2}+\tau+\theta}}.
 \end{array}
\end{equation}

By Lemma \ref{lmA.2}, we get
\begin{equation}\label{3.6}
\begin{array}{ll}
\left|\ds\int_{\r^N} \frac{1}{|x-y|^{N-2s}}H(y)dy\right|
&\leq C\|H\|_{**}\ds\int_{\r^N}\frac{1}{|x-y|^{N-2s}}
\sum_{i=1}^{k}\frac{1}{(1+|y-x^{i}|)^{\frac{N+2s}{2}+\tau}}dy\vspace{0.2cm}\\
&\leq  C\|H\|_{**}\ds\sum_{i=1}^{k}\frac{1}{(1+|x-x^{i}|)^{\frac{N-2s}{2}+\tau}}
 \end{array}
\end{equation}
and
\begin{equation}\label{3.7}
\begin{array}{ll}
&\left|\ds\int_{\r^N}\frac{1}{|x-y|^{N-2s}}\ds\sum_{i=1}^{k}U_{x^i,\va}^{2^*_s-2}(y)Z_{i,l}(y)dy\right|\vspace{0.2cm}\\
&\leq C\ds\int_{\r^N}\ds\frac{1}{|x-y|^{N-2s}}
\ds\sum_{i=1}^{k}U_{x^{i},\va}^{2^*_s-1}(y)dy\vspace{0.2cm}\\
&\leq C\ds\int_{\r^N}\ds\frac{1}{|x-y|^{N-2s}}\ds\sum_{i=1}^{k}\frac{1}{(1+|y-x^{i}|)^{N+2s}}dy\vspace{0.2cm}\\
&\leq  C\ds\sum_{i=1}^{k}\frac{1}{(1+|x-x^{i}|)^{N}}\vspace{0.2cm}\\
&\leq  C\ds\sum_{i=1}^{k}\frac{1}{(1+|x-x^{i}|)^{\frac{N-2s}{2}+\tau}}.
 \end{array}
\end{equation}
Next, we estimate $c_{l},l=1,2.$ Multiplying \eqref{3.3} by $Z_{1,t}(t=1,2)$ and integrating, we see that $c_{l}$ satisfies
\begin{equation}\label{3.8}
\sum_{l=1}^{2}c_{l}\sum_{i=1}^{k}\Big\langle U_{x^{i},\va}^{2^*_s-2}Z_{i,l},Z_{1,t}\Big\rangle
=
\Big\langle(-\Delta)^s\varphi
-(2^*_s-1) K \big(\frac{|y|}{\nu}\big)\ds U_{r,\va}^{2^*_s-2}\varphi,Z_{1,t}\Big\rangle
-\Big\langle H, Z_{1,t}\Big\rangle.
 \end{equation}
It follows from Lemma \ref{lmA.1} that
 $$
 \begin{array}{ll}
|\left\langle H, Z_{1,t}\right\rangle|&\leq C\|H\|_{**}\ds\int_{\r^N}\frac{1}{(1+|y-x^{1}|)^{N-2s}}
\sum_{i=1}^{k}\frac{1}{(1+|y-x^{i}|)^{\frac{N+2s}{2}+\tau}}\\
&\leq C \|H\|_{**}.
 \end{array}
 $$
  Then
 \begin{equation}\label{3.9}
\begin{array}{ll}
&\Big\langle(-\Delta)^s\varphi
-(2^*_s-1)K\big(\frac{|y|}{\nu}\big)\ds U_{r,\va}^{2^*_s-2}\varphi,Z_{1,t}\Big\rangle\vspace{0.15cm}\\
&=\Big\langle(-\Delta)^s Z_{1,t}
-(2^*_s-1)K\big(\frac{|y|}{\nu}\big)\ds U_{r,\va}^{2^*_s-2}Z_{1,t},\varphi\Big\rangle
\vspace{0.15cm}\\
&=\Big\langle (2^*_s-1)\ds U_{x^{1},\va}^{2^*_s-2}Z_{1,t}
-(2^*_s-1)K\big(\frac{|y|}{\nu}\big)\ds U_{r,\va}^{2^*_s-2}Z_{1,t},\varphi\Big\rangle\vspace{0.15cm}\\
&=C\|\varphi\|_*\ds \int \Big|K\big(\frac{|y|}{\nu}\big)-1\Big| U_{r,\va}^{2^*_s-2}
\frac{1}{(1+|y-x^1|)^{N-2s}}\sik\frac{1}{(1+|y-x^i|)^{\frac{N-2s}2+\tau}}.
 \end{array}
 \end{equation}
If $\Big||y|-\nu r_0\Big|\geq\sqrt{\nu}$, then
 $$
\big||y|-x^1\big|\geq \big||y|-\nu r_0\big|-\big||x^1|-\nu r_0\big|\geq \sqrt{\nu}-\frac{1}{\nu^{\bar\theta}}\geq\frac12\sqrt{\nu}.
 $$
Computing as Lemma~\ref{lmA.3}, we see
  \begin{eqnarray*}
&& \ds\int_{\big||y|-\nu r_0\big|\geq \sqrt{\nu}}\Big|K\big(\frac{|y|}{\nu}\big)-1\Big| U_{r,\va}^{2^*_s-2}
\frac{1}{(1+|y-x^1|)^{N-2s}}\sik\frac{1}{(1+|y-x^i|)^{\frac{N-2s}2+\tau}}dy\\
&\leq & \frac{C}{\nu^\sigma}\ds\int_{\r^N} U_{r,\va}^{2^*_s-2}
\frac{1}{(1+|y-x^1|)^{N-2s}}\sik\frac{1}{(1+|y-x^i|)^{\frac{N-2s}2+\tau-2\sigma}}dy\\
&\leq & \frac{C}{\nu^\sigma},
\end{eqnarray*}
where $\sigma$ is a small constant.
On the other hand, as Lemma~\ref{lmA.3}, we have
 \begin{eqnarray*}
&& \ds\int_{\big||y|-\nu r_0\big|\leq \sqrt{\nu}}\Big|K\big(\frac{|y|}{\nu}\big)-1\Big| U_{r,\va}^{2^*_s-2}
\frac{1}{(1+|y-x^1|)^{N-2s}}\sik\frac{1}{(1+|y-x^i|)^{\frac{N-2s}2+\tau}}\\
&\leq & \Big(\frac{C}{\sqrt{\nu}}\Big)^m\ds\int_{\r^N} U_{r,\va}^{2^*_s-2}
\frac{1}{(1+|y-x^1|)^{N-2s}}\sik\frac{1}{(1+|y-x^i|)^{\frac{N-2s}2+\tau}}\\
&\leq & \frac{C}{\sqrt{\nu}}.
\end{eqnarray*}
So, together with \eqref{3.9}, we obtain
\begin{equation}\label{3.10}
\Big\langle(-\Delta)^s\varphi
-(2^*_s-1)K\big(\frac{|y|}{\nu}\big)\ds U_{r,\va}^{2^*_s-2}\varphi,Z_{1,t}\Big\rangle=o(1)\|\varphi\|_*.
\end{equation}
But
 \begin{eqnarray*}
&&\sum_{i=1}^{k}\Big\langle U_{x^{i},\va}^{2^*_s-2}Z_{i,l},Z_{1,t}\Big\rangle\\
&\leq& \sum_{i=1}^{k}C\ds\int_{\r^N} \Big(\frac{1}{(1+|x-x^1|)^{N-2s}}\Big)^{2^*_s-1}\frac{1}{(1+|x-x^i|)^{N-2s}}dx\\
&\leq& C.
\end{eqnarray*}
 Hence it follows from \eqref{3.8} that
 \begin{equation}\label{3.11}
 c_{l}=O\left(\frac{1}{\nu^\sigma}\|\varphi\|_{*}+\|H\|_{**}\right)=o(1).
 \end{equation}
 So,
  \begin{equation}\label{3.12}
 \|\varphi\|_{*}\leq \Big(\|H\|_{**}+\frac{\sum\limits_{i=1}^{k}\frac{1}{(1+|y-x^{i}|)^{\frac{N-2s}{2}+\tau+\theta}}}
{\sum\limits_{i=1}^{k}\frac{1}{(1+|y-x^{i}|)^{\frac{N-2s}{2}+\tau}}}\Big).
 \end{equation}
  Since $\|\varphi\|_{*}=1$, we obtain from \eqref{3.12} that there is $R>0$ such that
 \begin{equation}\label{3.13}
 \|\varphi(x)\|_{L^\infty\left(B_{R}(x^{i})\right)}\geq a>0,
 \end{equation}
 for some $i$.  But $\tilde{\varphi}(x)=\varphi(x-x^i)$ converges uniformly in any compact set to a solution $u$ of
 \begin{equation}\label{3.14}
(-\Delta)^s u-(2^*_s-1)U_{0,\va}^{2^*_s-2}u=0,\,\,\,\text{in}\,\,\,
\r^{N},
 \end{equation}
 for some $\va\in[\va_{0},\va_{1}]$ and $u$ is perpendicular to the kernel of \eqref{3.14}. So $u=0$. This is a contradiction to \eqref{3.12}.
\end{proof}
\begin{proposition}\label{prop3.2} There exists $k_0>0$ and a constant $C>0,$ independent of $k,$ such that for all $k\geq k_0,$
 and all $H\in L^\infty(\r^N)$, problem \eqref{3.3} has a unique solution $\varphi\equiv l_k(H)$. Besides
  \begin{equation}\label{3.15}
 \|L_{k}(H)\|_{*}\leq C\|H\|_{**},
 \end{equation}
   \begin{equation}\label{3.16}
|c_{l}|\leq C\|H\|_{**}.
 \end{equation}
\end{proposition}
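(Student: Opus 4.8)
The plan is to recast the linear problem \eqref{3.3} as an operator equation in a Hilbert space, solve it for each fixed $k$ via the Fredholm alternative, and then promote the qualitative conclusion of Lemma~\ref{lm3.1} to the $k$-uniform estimate \eqref{3.15}; the bound \eqref{3.16} on the multipliers $c_l$ will follow at the end from the $2\times 2$ system \eqref{3.8}. Throughout I tacitly assume $\|H\|_{**}<\infty$ (otherwise \eqref{3.15}--\eqref{3.16} are vacuous).

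First I would fix the functional setting. Let $E$ be the closed subspace of $D^s(\r^N)$ consisting of the functions with the symmetries defining $\mathscr{H}$ and satisfying the orthogonality conditions $\langle U_{x^i,\va}^{2^*_s-2}Z_{i,l},\varphi\rangle=0$, equipped with $\langle u,v\rangle_{D^s(\r^N)}=\int_{\r^N}(-\Delta)^su\,v$. Testing \eqref{3.3} only against $\psi\in E$ kills the terms $\sum_l c_l\sum_i U_{x^i,\va}^{2^*_s-2}Z_{i,l}$, so that solving \eqref{3.3} is equivalent to finding $\varphi\in E$ with
$$
\langle\varphi,\psi\rangle_{D^s(\r^N)}-(2^*_s-1)\int_{\r^N}K\Bigl(\tfrac{|x|}{\nu}\Bigr)U_{r,\va}^{2^*_s-2}\varphi\,\psi=\langle\bar H,\psi\rangle_{D^s(\r^N)}\qquad\text{for all }\psi\in E,
$$
where $\bar H\in E$ is the Riesz representative of the functional $\psi\mapsto\int_{\r^N}H\psi$ (bounded on $E$ since the weight defining $\|\cdot\|_{**}$ lies in $L^{2N/(N+2s)}(\r^N)$). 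By Riesz this reads $\varphi-T_k\varphi=\bar H$, with $T_k:E\to E$ defined by $\langle T_k\varphi,\psi\rangle_{D^s(\r^N)}=(2^*_s-1)\int_{\r^N}K(\tfrac{|x|}{\nu})U_{r,\va}^{2^*_s-2}\varphi\,\psi$; since $U_{r,\va}^{2^*_s-2}\in L^{N/2s}(\r^N)$ and $D^s(\r^N)\hookrightarrow L^{2^*_s}(\r^N)$ compactly on bounded sets (the decay of $U_{r,\va}$ controlling the tail), $T_k$ is compact. The Fredholm alternative then yields existence and uniqueness of $\varphi\equiv L_k(H)$ provided the homogeneous equation $\varphi=T_k\varphi$, i.e. \eqref{3.3} with $H\equiv 0$, has only the trivial solution; but any such $\varphi$ has finite $\|\cdot\|_*$ norm (by the representation formula \eqref{3.4} together with Lemmas~\ref{lmA.1}--\ref{lmA.3}), so Lemma~\ref{lm3.1} applied with $H_k\equiv0$ forces $\|\varphi\|_*=0$. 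Hence for each $k\geq k_0$ problem \eqref{3.3} has a unique solution $L_k(H)$, with $c_1,c_2$ determined by \eqref{3.8}.

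Next I would establish the uniform estimate \eqref{3.15} by the usual normalization argument: were it false, there would exist $k\to\infty$ and $H_k$ with $\|L_k(H_k)\|_*/\|H_k\|_{**}\to\infty$, and rescaling so that $\|L_k(H_k)\|_*=1$ would give $\|H_k\|_{**}\to0$, contradicting Lemma~\ref{lm3.1}. For \eqref{3.16} I would read off $(c_1,c_2)$ from the linear system \eqref{3.8}: its coefficient matrix is $\bar c\,\mathrm{Id}+o(1)$ — the $i=1$ contribution being a fixed positive constant times the identity by the non-degeneracy of the bubble $U_{0,\va}$, and the $i\neq1$ contributions being $o(1)$ as in the computations of Lemma~\ref{lm3.1} — hence invertible with inverse bounded independently of $k$; and its right-hand side $\langle(-\Delta)^s\varphi-(2^*_s-1)K(\tfrac{|x|}{\nu})U_{r,\va}^{2^*_s-2}\varphi,Z_{1,t}\rangle-\langle H,Z_{1,t}\rangle$ is $O(\|H\|_{**})$ by \eqref{3.10}, the estimate \eqref{3.15}, and $|\langle H,Z_{1,t}\rangle|\le C\|H\|_{**}$ from Lemma~\ref{lmA.1}. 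Inverting the matrix gives $|c_l|\le C\|H\|_{**}$.

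I expect the main obstacle to be the $k$-uniformity of the constant $C$: the Fredholm alternative only gives existence and uniqueness for each fixed $k$, with an a priori bound that could degenerate as $k\to\infty$, and it is precisely the blow-up analysis of Lemma~\ref{lm3.1} — rescaling around a bump where $\|\varphi\|_{L^\infty}$ stays bounded below, passing to the limit equation \eqref{3.14}, and invoking non-degeneracy to conclude the limit vanishes — that removes this degeneration. A secondary point requiring care is the uniform invertibility of the $2\times2$ matrix in \eqref{3.8}, which in turn relies on the cross-bubble interaction terms there being genuinely $o(1)$; this is supplied by the decay estimates of the Appendix.
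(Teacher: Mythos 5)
Your proposal is correct and follows essentially the same route as the paper: the weak formulation in the orthogonal subspace, Riesz representation, compactness of $T_k$, the Fredholm alternative with the homogeneous problem excluded via Lemma~\ref{lm3.1}, and the normalization/contradiction argument for the $k$-uniform bound \eqref{3.15}. Your derivation of \eqref{3.16} by inverting the $2\times 2$ system \eqref{3.8} is exactly the content of estimate \eqref{3.11}, which the paper invokes together with \eqref{3.15} for the same purpose.
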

\begin{proof}
Following from \cite{dfm}, let us consider the space
$$
\mathbb{H}=\Big\{\varphi\in D^{2s}(\r^N)|\Big \langle \ds U_{x^{i},\va}^{2^*_s-2}Z_{i,l},\varphi\Big\rangle=0,\ \ i=1,...,k,l=1,2,\Big\}
$$
endowed with usual inner product $[\varphi,\psi]=\int_{\r^N}(-\Delta)^{\frac s2}\varphi(-\Delta)^{\frac s2}\psi$. Problem \eqref{3.3} expressed
in weak form is equivalent to that of finding a $\varphi\in \mathbb{H}$ such that
$$
[\varphi,\psi]=\langle(2^*_s-1)K\big(\frac{|x|}{\nu}\big)\ds U_{r,\va}^{2^*_s-2} \varphi+ H,\psi\rangle,\ \ \ \ \forall \psi\in \mathbb{H}.
$$
With the aid of Riesz's representation theorem, this equation gets rewritten in $\mathbb{H}$ in the operational form
\begin{equation}\label{3.17}
\varphi=T_{k}(\varphi)+\bar H
\end{equation}
with certain $\bar H\in \mathbb{H}$ which depends linearly in $H$ and where $T_{k}$ is a compact operator in $\mathbb{H}$. Fredholm's
alternative guarantees unique solvability of this problem for any $H$ provided that the homogeneous equation
$$
\varphi=T_k(\varphi)
$$
has only the zero solution in $\mathbb{H}$. Let us observe that this last equation is equivalent to
 \begin{equation}\label{3.18}
\left\{%
\begin{array}{ll}
    (-\Delta)^s \varphi-(2^*_s-1)K\big(\frac{|x|}{\nu}\big)\ds U_{r,\va}^{2^*_s-2} \varphi=
    \ds\sum_{l=1}^{2}c_{l}\ds\sum_{i=1}^{k} U_{x^{i},\va}^{2^*_s-2} Z_{i,l}, ~~&  x\in {\r}^{N}, \vspace{0.2cm}\\
   \varphi\in \mathscr{H},\vspace{0.2cm}\\
  \Big \langle \ds U_{x^{i},\va}^{2^*_s-2} Z_{i,l},\varphi\Big\rangle=0,\ \ i=1,...,k,l=1,2,
\end{array}%
\right.
\end{equation}
for certain constants $c_l$. Assume it has a nontrivial solution $\varphi=\varphi_k,$ which with
no loss of generality may be taken so that $\|\varphi_k\|_*=1.$ But this makes the Lemma \ref{lm3.1},
so that necessarily $\|\varphi_k\|_*\rightarrow 0.$ This is certainly a contradiction that proves that
this equation only has the trivial solution in $\mathbb{H}$. We conclude then that for each $H$,
problem \eqref{3.3} admits a unique solution. We check that
$$
\|\varphi\|_* \leq \|H\|_{**}.
$$
We assume again the opposite. In doing so, we find a sequence $H_k$ with $\|H\|_{**}=o(1)$ and solution
$\varphi_k\in \mathbb{H}$ of problem \eqref{3.3} with $\|\varphi\|_*=1.$ Again this makes the Lemma \ref{lm3.1} and a
contradiction has been found. This proves estimates \eqref{3.15}. Estimate \eqref{3.16} follows from this and relation
\eqref{3.11}. This concludes this proof of the proposition.
\end{proof}

Now, we consider
 \begin{equation}\label{3.19}
\left\{%
\begin{array}{ll}
    (-\Delta)^s \varphi-(2^*_s-1)K\big(\frac{|x|}{\nu}\big)\ds U_{r,\va}^{2^*_s-2} \varphi=
    N(\varphi)+l_k+\ds\sum_{l=1}^{2}c_{l}\ds\sum_{i=1}^{k} U_{x^{i},\va}^{2^*_s-2} Z_{i,l}, ~~&  x\in {\r}^{N}, \vspace{0.2cm}\\
   \varphi\in \mathscr{H},\vspace{0.2cm}\\
  \Big \langle \ds U_{x^{i},\va}^{2^*_s-2}Z_{i,l},\varphi\Big\rangle=0,\ \ i=1,...,k,l=1,2,
\end{array}%
\right.
\end{equation}
where
$$
N(\varphi)=K\big(\frac{|x|}{\nu}\big)\Big(\big(U_{r,\va}+\varphi\big)^{2^*_s-1}-U_{r,\va}^{2^*_s-1}-(2^*_s-1)U_{r,\va}^{2^*_s-2}\varphi\Big)
$$
and
$$
l_k=K\big(\frac{|x|}{\nu}\big)U_{r,\va}^{2^*_s-1}-\sik U_{x^i,\va}^{2^*_s-1}.
$$
Next, we estimate $N(\varphi)$ and $l_k.$
\begin{lemma}\label{lm3.3} We obtain
$$
\|N(\varphi)\|_{**}\leq C\|\varphi\|_*^{\min\{2^*_s-1,2\}}.
$$
\end{lemma}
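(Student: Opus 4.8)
The plan is to bound $N(\varphi)$ pointwise by an elementary inequality for the map $t\mapsto|t|^{2^*_s-2}t$ and then transfer it to the weighted norms by the cluster estimates of the Appendix. Write $p=2^*_s-1>1$ and $t^p:=|t|^{p-1}t$. First I would record the elementary inequalities, valid for all $a,b\in\r$,
\[
\bigl|(a+b)^p-a^p-p|a|^{p-1}b\bigr|\le
\begin{cases}
C\bigl(|a|^{p-2}b^2+|b|^p\bigr),& p\ge2,\\
C|b|^p,& 1<p<2,
\end{cases}
\]
obtained by writing the left-hand side as $\ds\int_0^1 p\bigl(|a+sb|^{p-1}-|a|^{p-1}\bigr)b\,ds$ and using that $t\mapsto|t|^{p-1}$ is Lipschitz (when $p\ge2$), resp.\ $(p-1)$-H\"older continuous (when $1<p<2$). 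Taking $a=U_{r,\va}\ge0$, $b=\varphi$, and using that $K$ is bounded, this gives the pointwise bound $|N(\varphi)|\le C|\varphi|^{2^*_s-1}$ when $N>6s$ (i.e.\ $1<2^*_s-1<2$) and $|N(\varphi)|\le C\bigl(U_{r,\va}^{2^*_s-3}\varphi^2+|\varphi|^{2^*_s-1}\bigr)$ when $N\le6s$ (i.e.\ $2^*_s-1\ge2$, in which case $2^*_s-3\ge0$ so that $U_{r,\va}^{2^*_s-3}$ is a genuine nonnegative power).

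Next I would pass to the norms. Using $|\varphi(x)|\le\|\varphi\|_*\sik(1+|x-x^i|)^{-\frac{N-2s}{2}-\tau}$ together with $U_{\va,x^i}(x)\le C(1+|x-x^i|)^{-(N-2s)}\le C(1+|x-x^i|)^{-\frac{N-2s}{2}-\tau}$, and bounding $U_{r,\va}^{2^*_s-3}\varphi^2\le C\|\varphi\|_*^2\bigl(\sik(1+|x-x^i|)^{-\frac{N-2s}{2}-\tau}\bigr)^{2^*_s-1}$, in both cases the estimate reduces to the single cluster inequality
\[
\Bigl(\sik\frac{1}{(1+|x-x^i|)^{\frac{N-2s}{2}+\tau}}\Bigr)^{2^*_s-1}\le C\sik\frac{1}{(1+|x-x^i|)^{\frac{N+2s}{2}+\tau}} .
\]
Granting this — which is exactly of the type established in the Appendix, its validity hinging on the balance $k\,\nu^{-\tau}\le C$ forced by $\tau=\frac{N-2s-m}{N-2s}$, $\nu=k^{\frac{N-2s}{N-2s-m}}$ and the lower bound on $m$ in \eqref{m} — one obtains $\|N(\varphi)\|_{**}\le C\|\varphi\|_*^{2^*_s-1}$ for $N>6s$, and $\|N(\varphi)\|_{**}\le C\bigl(\|\varphi\|_*^2+\|\varphi\|_*^{2^*_s-1}\bigr)\le C\|\varphi\|_*^2$ for $N\le6s$ (using $\|\varphi\|_*\le1$ and $2^*_s-1\ge2$), that is, $\|N(\varphi)\|_{**}\le C\|\varphi\|_*^{\min\{2^*_s-1,2\}}$.

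The algebraic step and the case split $N>6s$ versus $N\le6s$ are routine; the one delicate point is the cluster inequality above, in particular its validity far from all the bumps (say near the origin), where both sides are of the same small order and one must check that they really balance rather than the number $k$ of bumps overwhelming the weight. This is exactly where the choice of $\nu$, of $\tau$, and the restriction on $m$ in \eqref{m} enter, and it is the same mechanism used to prove the Appendix lemmas, so I would either invoke those directly or record the short variant needed here.
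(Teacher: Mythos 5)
Your argument is correct and follows essentially the same route as the paper: a pointwise Taylor-type bound on $N(\varphi)$ split according to $2^*_s\lessgtr 3$, followed by the cluster inequality $\bigl(\sum_i(1+|x-x^i|)^{-\frac{N-2s}{2}-\tau}\bigr)^{2^*_s-1}\le C\sum_i(1+|x-x^i|)^{-\frac{N+2s}{2}-\tau}$. The paper proves that inequality on the spot by H\"older's inequality together with the bound $\sum_i(1+|x-x^i|)^{-\tau}\le C$ (which is exactly your balance $k\nu^{-\tau}\le C$, computed as in Lemma \ref{lmA.3}), which is the ``short variant'' you defer to, so no genuine gap remains.
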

\begin{proof}
Firstly, we deal with the case $2^*_s\leq3.$ Since
$$
\sik \frac{1}{(1+|x-x^i|)^{\frac{N-2s-m}{N-2s}}}\leq C+\sum_{i=2}^k\frac{C}{|x^1-x^i|^{\frac{N-2s-m}{N-2s}}}\leq C,
$$
we find by H\"{o}lder inequality,
 \begin{eqnarray*}
 |N(\varphi)|&\leq&C \|\varphi\|_{*}^{2^*_s-1}
 \Big(\sum_{i=1}^{k}\frac{1}{(1+|x-x^{i}|)^{\frac{N-2s}{2}+\tau}}\Big)^{2^*_s-1} \\
&\leq&
 C \|\varphi\|_{*}^{2^*_s-1}\sum_{i=1}^{k}\frac{1}{(1+|x-x^{i}|)^{\frac{N+2s}{2}+\tau}}
 \Big(\sum_{i=1}^{k}\frac{1}{(1+|x-x^{i}|)^{\tau}}\Big)^{\frac{4s}{N-2s}}\\
 &\leq&  C\|\varphi\|_{*}^{2^*_s-1}\sum_{i=1}^{k}\frac{1}{(1+|y-x^{i}|)^{\frac{N+2s}{2}+\tau}}.
 \end{eqnarray*}
Using  the same argument, for the case $2^*_s>3$,  we also can obtain that
\begin{eqnarray*}
 |N(\varphi)|&\leq&C \|\varphi\|_{*}^2\Big(\sum_{i=1}^{k}\frac{1}{(1+|x-x^{i}|)^{N-2s}}\Big)^{2^*_s-3}
 \Big(\sum_{j=1}^{k}\frac{1}{(1+|x-x^{j}|)^{\frac{N-2s}{2}+\tau}}\Big)^2 \\
 &&+\|\varphi\|_{*}^{2^*_s-1}\Big(\sum_{i=1}^{k}\frac{1}{(1+|x-x^{i}|)^{\frac{N-2s}{2}+\tau}}\Big)^{2^*_s-1}\\
&\leq&
 C (\|\varphi\|_{*}^{2^*_s-1}+\|\varphi\|_{*}^2)\Big(\sum_{i=1}^{k}\frac{1}{(1+|x-x^{i}|)^{\frac{N-2s}{2}+\tau}}\Big)^{2^*_s-1}\\
 &\leq&  C\|\varphi\|_{*}^{2}\sum_{i=1}^{k}\frac{1}{(1+|x-x^{i}|)^{\frac{N+2s}{2}+\tau}}.
 \end{eqnarray*}

\end{proof}

\begin{lemma}\label{lm3.4} Assume that $||x^1|-\nu r_0|\leq \frac{1}{\nu^{\bar\theta}}$, where $\bar\theta>0$ is a fixed small constant. Then
 there is a small $\sigma>0,$ such that
 $$
 \|l_k\|_{**}\leq C\Big(\frac{1}{\nu}\Big)^{\frac m2+\sigma}.
 $$
\end{lemma}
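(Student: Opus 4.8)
The plan is to split $l_k$ into the error caused by the potential and the error caused by the overlapping of the bubbles. Since $U_{r,\va}=\sik U_{x^i,\va}$, I would write
\begin{equation*}
l_k=\underbrace{\Big(K\big(\tfrac{|x|}{\nu}\big)-1\Big)U_{r,\va}^{2^*_s-1}}_{=:I_1}
\;+\;\underbrace{\Big(U_{r,\va}^{2^*_s-1}-\sik U_{x^i,\va}^{2^*_s-1}\Big)}_{=:I_2},
\end{equation*}
so that $\|l_k\|_{**}\le\|I_1\|_{**}+\|I_2\|_{**}$, and estimate the two pieces separately.

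\emph{Step 1: the interaction term $I_2$.} Here there is no potential, so this is a (by now standard) bubble-interaction estimate. Using the elementary inequalities $\big|(\sum_i a_i)^p-\sum_i a_i^p\big|\le C\sum_{i\ne j}a_i^{p-1}a_j$ when $1<p\le2$ and $\big|(\sum_i a_i)^p-\sum_i a_i^p\big|\le C\sum_{i\ne j}a_i^{p-1}a_j+C\sum_{i\ne j}a_i^{p-2}a_j^2$ when $p>2$, with $a_i=U_{x^i,\va}$ and $p=2^*_s-1$, together with $U_{x^i,\va}(x)\le C(1+|x-x^i|)^{-(N-2s)}$ and the separation $|x^i-x^j|\ge c\,\nu/k=c\,\nu^{m/(N-2s)}$ (a consequence of $\nu=k^{(N-2s)/(N-2s-m)}$), one estimates each cross term exactly as in Lemma~\ref{lmA.3}. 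This produces a negative power of $\nu^{m/(N-2s)}$, and a direct check using \eqref{m} shows the resulting power is $\le-(\tfrac m2+\sigma)$ for a small $\sigma>0$; the extra quadratic cross term appearing when $2^*_s>3$ is controlled in the same way.

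\emph{Step 2: the potential term $I_1$.} Following the proof of Lemma~\ref{lm3.1}, I would split $\r^N=A_1\cup A_2$ with $A_1=\{x:\ \big||x|-\nu r_0\big|\le\sqrt\nu\}$ and $A_2=\r^N\setminus A_1$. On $A_1$ one has $\big|\tfrac{|x|}\nu-r_0\big|\le\nu^{-1/2}<\delta$ for $k$ large, so by $(K)$ (or $(K')$)
\begin{equation*}
\Big|K\big(\tfrac{|x|}\nu\big)-1\Big|\le c_0\Big(\tfrac{\big||x|-\nu r_0\big|}{\nu}\Big)^m+O\Big(\big(\tfrac{\big||x|-\nu r_0\big|}{\nu}\big)^{m+\theta}\Big)\le C\Big(\tfrac{\big||x|-\nu r_0\big|}{\nu}\Big)^m,
\end{equation*}
and near a centre $x^i$, $\big||x|-\nu r_0\big|\le|x-x^i|+\nu^{-\bar\theta}$. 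Since $U_{r,\va}^{2^*_s-1}(x)\le C\sik(1+|x-x^i|)^{-(N+2s)}$ while the $\|\cdot\|_{**}$-weight carries only the exponent $\tfrac{N+2s}2+\tau<N+2s$, there is a surplus decay of order $\tfrac{N+2s}2-\tau>0$ available to absorb the growth $(|x-x^i|/\nu)^m$; balancing these, the worst case $|x-x^i|\sim\sqrt\nu$ contributes $\nu^{-m/2}\cdot\nu^{-\frac12(\frac{N+2s}2-\tau)}$, while $|x-x^i|\lesssim1$ contributes only $\nu^{-(1+\bar\theta)m}$, so $\|I_1\mathbf 1_{A_1}\|_{**}\le C\nu^{-m/2-\sigma}$. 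On $A_2$ one has $|x-x^i|\ge\tfrac12\sqrt\nu$ for every $i$ (as in Lemma~\ref{lm3.1}), $K$ is bounded, and all of the surplus decay of $U_{r,\va}^{2^*_s-1}$ over the $\|\cdot\|_{**}$-weight is evaluated at scale $\sqrt\nu$, which by \eqref{m} is more than enough to give $\|I_1\mathbf 1_{A_2}\|_{**}\le C\nu^{-m/2-\sigma}$. Adding these and Step~1 yields $\|l_k\|_{**}\le C\nu^{-m/2-\sigma}$.

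\emph{Where the difficulty lies.} The delicate point is the bookkeeping in Step~2 on $A_1$ — and, symmetrically, in Step~1 on the overlap region of two neighbouring bubbles: one must verify that the algebraic growth $(|x-x^i|/\nu)^m$ of $|K(\tfrac{|x|}\nu)-1|$ is beaten, with a fixed margin, by the surplus decay rate $\tfrac{N+2s}2-\tau$ of $U_{r,\va}^{2^*_s-1}$ over the $\|\cdot\|_{**}$-weight, uniformly for $m$ in the whole admissible range. This is precisely what the lower bound $m>\max\{2,\,N-2s-2(N-2s)^2/(N+2s)\}$ in \eqref{m} is there to guarantee; everything else is routine once the appendix estimates (Lemmas~\ref{lmA.1}--\ref{lmA.3}) are in hand.
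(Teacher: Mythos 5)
Your decomposition of $l_k$ is, up to a bounded factor of $K$, the paper's own ($J_1=K(\frac{|x|}{\nu})\bigl(U_{r,\va}^{2^*_s-1}-\sik U_{x^i,\va}^{2^*_s-1}\bigr)$, $J_2=\bigl(K(\frac{|x|}{\nu})-1\bigr)\sik U_{x^i,\va}^{2^*_s-1}$), and your Step~1 is essentially the paper's estimate of $J_1$ via Lemma~\ref{lmA.1} and the spacing $|x^i-x^1|\sim \nu/k$. The problems are in Step~2. First, the inequality you start from, $U_{r,\va}^{2^*_s-1}(x)\le C\sik (1+|x-x^i|)^{-(N+2s)}$ with $C$ independent of $k$, is false: at a point far from all the bubbles (e.g.\ $x=0$) one has $U_{r,\va}(0)\sim k\,\nu^{-(N-2s)}$, so the left-hand side is of order $k^{\frac{N+2s}{N-2s}}\nu^{-(N+2s)}$ while the right-hand side is only $k\,\nu^{-(N+2s)}$, a discrepancy of $k^{\frac{4s}{N-2s}}$. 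This is exactly why the paper attaches the factor $K-1$ to the \emph{sum of powers} $\sik U_{x^i,\va}^{2^*_s-1}$ and throws the cross terms of $U_{r,\va}^{2^*_s-1}$ into the interaction piece $J_1$, where only boundedness of $K$ is needed; with your splitting $I_1=(K-1)U_{r,\va}^{2^*_s-1}$ you would have to redo this bookkeeping, and your stated justification does not do it.

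Second, your outer region is too small. Cutting at $\bigl||x|-\nu r_0\bigr|=\sqrt\nu$ and using only boundedness of $K$ on $A_2$, the surplus decay of the bubble over the $\|\cdot\|_{**}$-weight is evaluated at scale $\sqrt\nu$ and yields only $\nu^{-\frac12(\frac{N+2s}{2}-\tau)}$; this is $\le\nu^{-\frac m2-\sigma}$ only if $m<\frac{N+2s}{2}-\tau$, which fails in the upper part of the admissible range $m<N-2s$ (e.g.\ $N=5$, $s=\frac12$, $m=3.5$, where $\frac{N+2s}{2}=3$). The paper avoids this by cutting at $\bigl||x|-\nu r_0\bigr|=\delta\nu$: on the far region the full surplus gives $\nu^{-(\frac{N+2s}{2}-\tau)}\le\nu^{-\frac m2-\sigma}$, which holds with margin because $\frac{N+2s}{2}-\tau-\frac m2\ge 2s$ whenever $m<N-2s$; on the near region it writes $|K-1|\le C\nu^{-m}\bigl(||x|-|x^1||^m+\nu^{-m\bar\theta}\bigr)$ and absorbs only the power $\frac m2+\epsilon$ of $|x-x^1|$ into the decay, keeping $\nu^{-\frac m2-\epsilon}$ — this is where the \emph{upper} bound $m<N-2s$ does the work, not the lower bound in \eqref{m} (the lower bound is what makes the interaction estimates of Step~1 and Lemma~\ref{lmA.3} close). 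As written, your Step~2 therefore has a genuine gap for large admissible $m$, and its pointwise starting inequality is incorrect; both are repairable precisely by switching to the paper's decomposition and the $\delta\nu$ cut.
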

\begin{proof}
Define
$$
\Omega_{i}=\left\{x:x=(x',x'')\in \r^{2}\times\r^{N-2},\left\langle \frac{x'}{|x'|},\frac{x^{i}}{|x^{i}|}\right\rangle\geq \cos\frac{\pi}{k}\right\}.
$$
We have
\begin{eqnarray*}
 l_k&=&K\Big(\frac{|x|}{\nu}\Big)\Big(U_{r,\va}^{2^*_s-1}-\sik U_{x^i,\va}^{2^*_s-1}\Big)+\sik U_{x^i,\va}^{2^*_s-1}\Big(K\Big(\frac{|x|}{\nu}\Big)-1\Big)\\
 &:=&J_1+J_2.
 \end{eqnarray*}
 By symmetry, we can assume that $x\in\Omega_{1}$. Then
\begin{equation*}
|x-x^{i}|\geq|x-x^1|,\,\,\forall \ x\in\Omega_1.
\end{equation*}
In order to estimate $J_{1},$ we have
\begin{equation}\label{3.20}
\begin{array}{ll}
|J_{1}|&\leq C\Big(\big(\sum\limits_{i=1}^k \ds\frac{1}{(1+|x-x^i|)^{N-2s}}\big)^{2^*_s-1}-
\sum\limits_{i=1}^k\big(\ds\frac{1}{(1+|x-x^i|)^{N-2s}}\big)^{2^*_s-1}\Big)\\
&\leq C \ds\frac{1}{(1+|x-x^1|)^{4s}}\sum\limits_{i=2}^k \ds\frac{1}{(1+|x-x^i|)^{N-2s}}+C\Big(\sum\limits_{i=2}^k \ds\frac{1}{(1+|x-x^i|)^{N-2s}}\Big)^{2^*_s-1}.
 \end{array}
\end{equation}
By Lemma \ref{lmA.1}, taking any $1<\alpha\leq \frac{N+2s}{2}$, we obtain that for any $x\in\Omega_{1},$
\begin{equation}\label{3.21}
\begin{array}{ll}
&\ds\frac{1}{(1+|x-x^1|)^{4s}}\ds\sum_{i=2}^{k}\frac{1}{(1+|x-x^{i}|)^{N-2s}}
\leq\ds\frac{1}{(1+|x-x^1|)^{\frac{N+2s}{2}}}\ds\sum_{i=2}^{k}\frac{1}{(1+|x-x^{i}|)^{\frac{N+2s}{2}}}\vspace{0.2cm}\\
&\leq
C\ds\sum_{i=2}^{k}\left[\ds\frac{1}{(1+|x-x^1|)^{N+2s-\alpha}}+\ds\frac{1}{(1+|x-x^{i}|)^{N+2s-\alpha}}\right]
\frac{1}{|x^i-x^1|^{\alpha}}\vspace{0.2cm}\\
&\leq C\ds\frac{1}{(1+|x-x^1|)^{N+2s-\alpha}}\sum_{i=2}^{k}\frac{1}{|x^i-x^1|^{\alpha}}\vspace{0.2cm}\\
&\leq \ds\frac{C}{(1+|x-x^i|)^{N+2s-\alpha}}\left(\frac{k}{\nu}\right)^{\alpha}.
 \end{array}
 \end{equation}
 We can choose $\alpha>\frac{N-2s}{2}$ with $N+2s-\alpha\geq \frac{N+2s}{2}+\tau $ since
 $m\geq N-2s-2s\frac{(N-2s)^2}{N+2s}>N-2s-2s(N-2s).$ Hence
 \begin{eqnarray*}
\ds\frac{1}{(1+|x-x^{1}|)^{4s}}\sum_{i=2}^{k}\frac{1}{(1+|x-x^{i}|)^{N-2s}}
\leq
\ds\frac{C}{(1+|x-x^{1}|)^{\frac{N+2s}{2}+\tau}}\left(\frac{1}{\nu}\right)^{\frac{m}{2}+\epsilon}.
\end{eqnarray*}
On the other hand, for $x\in \Omega_{1}$, by Lemma \ref{lmA.1} again, we get
\begin{eqnarray*}
&&\ds\frac{1}{(1+|x-x^{i}|)^{N-2s}}\\
&\leq& \frac{1}{(1+|x-x^{1}|)^{\frac{N-2s}{2}}}\frac{1}{(1+|x-x^{i}|)^{\frac{N-2s}{2}}}
\\
&\leq&\frac{C}{|x^{i}-x^{1}|^{\frac{N-2s}{2}-\frac{N-2s}{N+2s}\tau}}
\Big( \frac{1}{(1+|x-x^{1}|)^{\frac{N-2s}{2}+\frac{N-2s}{N+2s}\tau}}+\frac{1}{(1+|x-x^{i}|)^{\frac{N-2s}{2}+\frac{N-2s}{N+2s}\tau}}\Big)
\\
&\leq&\frac{C}{|x^{i}-x^{1}|^{\frac{N-2s}{2}-\frac{N-2s}{N+2s}\tau}}
 \frac{1}{(1+|x-x^{1}|)^{\frac{N-2s}{2}+\frac{N-2s}{N+2s}\tau}}.
\end{eqnarray*}
When $m>2,\ \tau=\frac{N-2s-m}{N-2s}$, we can obtain $\frac{N-2s}{2}-\frac{N-2s}{N+2s}\tau>1.$ Thus
$$
\sum_{i=2}^k\frac{1}{(1+|x-x^i|)^{N-2s}}\leq C\Big(\frac k\nu\Big)^{\frac{N-2s}{2}-\frac{N-2s}{N+2s}\tau}\frac{1}{(1+|x-x^{1}|)^{\frac{N-2s}{2}+\frac{N-2s}{N+2s}\tau}},
$$
which gives
$$
\Big(\sum_{i=2}^k\frac{1}{(1+|x-x^i|)^{N-2s}}\Big)^{2^*_s-1}\leq C\Big(\frac k\nu\Big)^{\frac{N+2s}{2}-\tau}\frac{1}{(1+|x-x^{1}|)^{\frac{N+2s}{2}+\tau}}.
$$
So,
$$
\|J_1\|_{**}\leq C\Big(\frac 1\nu\Big)^{\frac m2+\epsilon}.
$$
Next, we estimate $J_2.$ For any $x\in\Omega_{1}$ and $i=2,\cdots,k$, applying Lemma \ref{lmA.1}, we have
\begin{eqnarray*}
U_{x^i,\va}^{2^*_s-1}(x)&\leq &
C\frac{1}{(1+|x-x^1|)^{\frac{N+2s}{2}}}\frac{1}{(1+|x-x^i|)^{\frac{N+2s}{2}}}\\
&\leq& C\Big(\frac{1}{(1+|x-x^1|)^{\frac{N+2s}{2}+\tau}}+\frac{1}{(1+|x-x^i|)^{\frac{N+2s}{2}+\tau}}\Big)\frac{1}{|x^i-x^1|^{\frac{N+2s}{2}-\tau}}\\
&\leq& C
\ds\frac{1}{(1+|x-x^1|)^{\frac{N+2s}{2}+\tau}}\frac{1}{|x^i-x^1|^{\frac{N+2s}{2}-\tau}},
\end{eqnarray*}
which implies that
\begin{eqnarray*}
\left|\sum_{i=2}^k\Big(K\big(\frac{|x|}{\nu}\big)-1\Big)U_{x^i,\va}^{2^*_s-1}\right|&\leq &
C\frac{1}{(1+|x-x^i|)^{\frac{N+2s}{2}+\tau}}\sum_{i=1}^{k}\frac{1}{|x^i-x^1|^{\frac{N+2s}{2}-\tau}}\\
&\leq&C
\ds\frac{1}{(1+|x-x^1|)^{\frac{N+2s}{2}+\tau}}\Big(\frac{k}{\nu}\Big)^{\frac{N+2s}{2}-\tau}.
\end{eqnarray*}
For $x\in\Omega_1$ and $\big||x|-\nu r_0\big|\geq \delta \nu$, where $\delta>0$ is a fixed constant, then
$$
\big||x|-|x^1|\big|\geq\big||x|-\nu r_0\big|-\big||x^1|-\nu r_0\big|\geq \frac{\delta \nu}{2}.
$$
As a result,
\begin{eqnarray*}
\Big|\Big(K\Big(\frac{|x|}{\nu}\Big) -1\Big)U_{x^1,\va}^{2^*_s-1}\Big|
&\leq&\frac{C}{(1+|x-x^1|)^{N+2s}}\\
&\leq&\frac{C}{(1+|x-x^1|)^{\frac{N+2s}{2}+\tau}}\left(\frac{1}{\nu}\right)^{\frac{N+2s}{2}-\tau}.
\end{eqnarray*}
If $x\in\Omega_1$ and $\big||x|-\nu r_0\big|\leq\delta\nu$, then
\begin{eqnarray*}
\Big|K\Big(\frac{|x|}{\nu}\Big) -1\Big|&\leq& C\Big|\frac{|x|}{\nu}-r_0\Big|^{m}\\
&\leq&
\frac{C}{\nu^{m}}\Big(\big||x|-|x^1|\big|^{m}+\big||x^1|-\nu r_0\big|^{m}\Big)\\
&\leq&\frac{C}{\nu^{m}}\big||x|-|x^1|\big|^{m}+\frac{C}{\nu^{m+\bar{\theta}}}
\end{eqnarray*}
and
$$
\big||x|-|x^1|\big|\leq \big||x|-\nu r_{0}\big|+\left|\nu r_{0}-|x^1|\right|
\leq 2\delta\nu.
$$
As a result,
\begin{eqnarray*}
&&\frac{\big||x|-|x^1|\big|^{m}}{\nu^{m}}\frac{1}{(1+|x-x^1|)^{N+2s}}\\
&\leq&\frac{C}{\nu^{\frac{m}{2}+\epsilon}}\frac{\big||x|-|x^1|\big|^{\frac{m}{2}+\epsilon}}{(1+|x-x^1|)^{N+2s}}
\\
&\leq&\frac{C}{\nu^{\frac{m}{2}+\epsilon}}\frac{1}{(1+|x-x^1|)^{\frac{N+2s}{2}+\tau}}
\frac{1}{(1+|x-x^1|)^{\frac{N+2s}{2}-\tau-\frac{m}{2}-\epsilon}}\\
&\leq&\frac{C}{\nu^{\frac{m}{2}+\epsilon}}\frac{1}{(1+|x-x^i|)^{\frac{N+2s}{2}+\tau}},
\end{eqnarray*}
since  $\frac{N+2s}{2}-\tau-\frac{m}{2}-\epsilon\geq\frac{N+2s}{2}-\tau-\frac{N-2s}{2}-\epsilon>0.$
Thus, we obtain
\begin{eqnarray*}
\Big|\Big(K\big(\frac{|x|}{\nu}\big) -1\Big)U_{x^i,\va}^{2^*_s-1}\Big|
\leq\frac{C}{\nu^{\frac{m}{2}+\epsilon}}\frac{1}{(1+|x-x^i|)^{\frac{N+2s}{2}+\tau}},\,\,\,
\left||x|-\nu r_0\right|\leq\delta\nu.
\end{eqnarray*}
Hence,
$$
\|J_{2}\|_{**}\leq C\left(\frac{1}{\nu}\right)^{\frac{m}{2}+\epsilon}.
$$
Therefore,
$$
\|l_{k}\|_{**}\leq
C\left(\frac{1}{\nu}\right)^{\frac{m}{2}+\epsilon}.
$$
\end{proof}
From Lemmas \ref{lm3.3} and \ref{lm3.4}, we have
\begin{proposition}\label{prop3.3} There is an integer $k_0>0,$ such that for each $k\geq k_0,\ \va_0\leq\va\leq \va_1,\ |r-\nu r_0|\leq \frac{1}{\nu^{\bar\theta}},$ where $\bar\theta>0$ is a fixed constant, \eqref{3.19} has a unique solution $\varphi=\varphi(r,\va)$
satisfying
$$
\|\varphi\|_*\leq C\Big(\frac 1\nu\Big)^{\frac m2+\epsilon},\ \ \ |c_l|\leq C\Big(\frac 1\nu\Big)^{\frac m2+\epsilon}.
$$
where $\epsilon>0$ is a small constant.
\begin{proof} First we recall that $\nu=k^{\frac{N-2s}{N-2s-m}}.$
Set
$$
\mathcal {N}=\left\{w:w\in
C^{\alpha}(\r^{N})\cap \mathscr{H},\|u\|_{*}\leq \frac{1}{\nu^{\frac{m}{2}}},\int_{\r^N} U^{2^*_s-2}_{x^i,\va}Z_{i,l}w=0\right\},
$$
where $0<\alpha<s$ and $i=1,2,...,k,l=1,2.$
Thus from Proposition\ref{prop3.2}, \eqref{3.19} is equivalent to
\begin{equation*}
\varphi=\mathcal
{A}(\varphi)=:L_{k}(N(\varphi))+L_{k}(l_{k}),
\end{equation*}
 $L_{k}$ is defined in Proposition \ref{prop3.2}.
We obtain
\begin{equation}\label{A1}
\begin{array}{ll}
\|\mathcal {A}(\varphi)\|_{*}&\leq
C\|N(\varphi)\|_{**}+C\|l_{k}\|_{**}\leq
C\|\varphi\|_{*}^{\min\{2^*_s-1,2\}}+C\Big(\ds\frac{1}{\nu}\Big)^{\frac{m}{2}+\epsilon}\vspace{0.2cm}\\
&\leq C\left(\ds\frac{1}{\nu}\right)^{\frac{m}{2}(\min\{2^*_s-1,2\})}+C\left(\ds\frac{1}{\nu}\right)^{\frac{m}{2}+\epsilon}\vspace{0.2cm}\\
&\leq C\left(\ds\frac{1}{\nu}\right)^{\frac{m}{2}+\epsilon}
 \leq
\ds\frac{1}{\nu^{\frac{m}{2}}}.
\end{array}
\end{equation}
Hence, $\mathcal {A}$ maps $\mathcal {N}$ to $\mathcal {N}.$
On the other hand,
$$
|N'(t)|\leq C|t|^{\min\{2^*_s-2,1\}}.
$$
We get
\begin{eqnarray*}
&&\|\mathcal {A}(\varphi_{1})-\mathcal
{A}(\varphi_{2})\|_{*}=\|L_{k}(N(\varphi_{1}))-L_{k}(N(\varphi_{2}))\|_{*}\leq C\|N(\varphi_{1})-N(\varphi_{2})\|_{**}\\
&\leq& C |N'(\varphi_{1}+\theta\varphi_{2})||\varphi_{1}-\varphi_{2}|\\
&\leq &C
(|\varphi_{1}|^{\min\{2^*_s-2,1\}}+|\varphi_{2}|^{\min\{2^*_s-2,1\}})
|\varphi_{1}-\varphi_{2}|\\
&\leq&C
(\|\varphi_{1}\|_{*}^{\min\{2^*_s-2,1\}}+\|\varphi_{2}\|_{*}^{\min\{2^*_s-2,1\}})
\|\varphi_{1}-\varphi_{2}\|_{*}
\Big(\sum_{i=1}^{k}\frac{1}{(1+|x-x^i|)^{\frac{N-2s}{2}+\tau}}\Big)^{\min\{2^*_s-1,2\}}\\
&\leq&\frac12\|\varphi_{1}-\varphi_{2}\|_{*}.
\end{eqnarray*}
Thus $\mathcal {A}$ is a contraction map.
Therefore, $\mathcal{A}$ is a contraction map from
$\mathcal {N}$ to $\mathcal {N}.$ Now applying the contraction mapping theorem, we can find a unique
$\varphi=\varphi(r,\va)\in \mathcal {N}$ such that
$$
\varphi=\mathcal {A}(\varphi).
$$
 Moreover, by Proposition \ref{prop3.2} we have
$$
\|\varphi\|_*\leq C\Big(\frac1\nu\Big)^{\frac m2+\epsilon}.
$$
Moreover, we get the estimate of $c_{l}$ from \eqref{3.11}. We also can see \eqref{3.16}.

\end{proof}
\end{proposition}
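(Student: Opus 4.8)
The plan is to solve \eqref{3.19} by the contraction mapping theorem, treating it as a perturbation of the linear problem \eqref{3.3} handled in Proposition~\ref{prop3.2}. Recall first that $\nu=k^{\frac{N-2s}{N-2s-m}}$, so $\nu\to\infty$ as $k\to\infty$. Writing $L_k$ for the bounded solution operator of \eqref{3.3} furnished by Proposition~\ref{prop3.2} (so that $\|L_k(H)\|_*\le C\|H\|_{**}$ by \eqref{3.15}), equation \eqref{3.19} is equivalent to the fixed-point equation
$$
\varphi=\mathcal{A}(\varphi):=L_k\big(N(\varphi)\big)+L_k(l_k)
$$
posed on the set
$$
\mathcal{N}=\Big\{w\in C^\alpha(\r^N)\cap\mathscr{H}:\ \|w\|_*\le \nu^{-m/2},\ \int_{\r^N}U_{x^i,\va}^{2^*_s-2}Z_{i,l}\,w=0\ \ \forall\,i,l\Big\}
$$
with $0<\alpha<s$, the H\"{o}lder regularity being a technical device to keep the fixed point in a convenient complete space.

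First I would verify that $\mathcal{A}$ maps $\mathcal{N}$ into itself. By \eqref{3.15}, Lemma~\ref{lm3.3} and Lemma~\ref{lm3.4}, for $\varphi\in\mathcal{N}$,
$$
\|\mathcal{A}(\varphi)\|_*\le C\|N(\varphi)\|_{**}+C\|l_k\|_{**}\le C\|\varphi\|_*^{\min\{2^*_s-1,2\}}+C\nu^{-(m/2+\sigma)}\le C\nu^{-\frac m2\min\{2^*_s-1,2\}}+C\nu^{-(m/2+\sigma)}.
$$
Since $N>2+2s$ forces $2^*_s-1=\tfrac{N+2s}{N-2s}>1$, the exponent $\min\{2^*_s-1,2\}$ is strictly larger than $1$, so both terms on the right are $o(\nu^{-m/2})$; hence for $k$ large $\|\mathcal{A}(\varphi)\|_*\le \nu^{-m/2}$ and $\mathcal{A}(\mathcal{N})\subset\mathcal{N}$.

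Next I would show $\mathcal{A}$ is a contraction on $\mathcal{N}$. Using $|N'(t)|\le C|t|^{\min\{2^*_s-2,1\}}$ together with \eqref{3.15}, for $\varphi_1,\varphi_2\in\mathcal{N}$ one gets
$$
\|\mathcal{A}(\varphi_1)-\mathcal{A}(\varphi_2)\|_*\le C\|N(\varphi_1)-N(\varphi_2)\|_{**}\le C\big(\|\varphi_1\|_*^{\min\{2^*_s-2,1\}}+\|\varphi_2\|_*^{\min\{2^*_s-2,1\}}\big)\|\varphi_1-\varphi_2\|_*,
$$
and since $\|\varphi_i\|_*\le\nu^{-m/2}\to0$ the prefactor is $<\tfrac12$ once $k$ is large, so $\|\mathcal{A}(\varphi_1)-\mathcal{A}(\varphi_2)\|_*\le\tfrac12\|\varphi_1-\varphi_2\|_*$. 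The contraction mapping theorem then yields a unique $\varphi=\varphi(r,\va)\in\mathcal{N}$ solving \eqref{3.19}. Substituting this $\varphi$ back into $\varphi=\mathcal{A}(\varphi)$ and using $\|\varphi\|_*^{\min\{2^*_s-1,2\}}=o(\nu^{-m/2})$ produces the sharpened bound $\|\varphi\|_*\le C\nu^{-(m/2+\epsilon)}$ with $\epsilon=\min\{\tfrac m2(\min\{2^*_s-1,2\}-1),\sigma\}>0$; the estimate $|c_l|\le C\nu^{-(m/2+\epsilon)}$ then follows from \eqref{3.16} (equivalently \eqref{3.11}) applied with $H=N(\varphi)+l_k$.

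All the genuine analytic content is packaged into the earlier results, so the main point to watch is the exponent bookkeeping: one needs $\min\{2^*_s-1,2\}>1$ strictly — which is exactly where $N>2+2s$ enters — so that the nonlinear term $N(\varphi)$ is of higher order than the radius $\nu^{-m/2}$ of $\mathcal{N}$, and one needs the size $\nu^{-m/2-\sigma}$ of $l_k$ from Lemma~\ref{lm3.4} — which itself rests on the restriction \eqref{m} on $m$ and the choice $\tau=\frac{N-2s-m}{N-2s}$ — to be the dominant error. Once these are confirmed, the Banach fixed-point scheme closes with no further difficulty.
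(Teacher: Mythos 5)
Your proposal is correct and follows essentially the same route as the paper: the fixed-point formulation $\varphi=L_k(N(\varphi))+L_k(l_k)$ on the ball $\mathcal{N}$ of radius $\nu^{-m/2}$, the self-map and contraction estimates via Lemmas \ref{lm3.3}--\ref{lm3.4} and \eqref{3.15}, and the final bounds from the fixed-point identity and \eqref{3.11}/\eqref{3.16}. Your exponent bookkeeping (in particular spelling out why the nonlinear term is of higher order than $\nu^{-m/2}$ and where the bound on $l_k$ dominates) is, if anything, slightly more explicit than the paper's; the only cosmetic slip is attributing $2^*_s-1>1$ to $N>2+2s$, when it already holds for all $N>2s$.
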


\section{Proof of the main result}
Let $F(r,\va)=I(U_{r,\va}+\varphi),$ where
$r=|x^1|$, $\varphi$ is the function obtained in Proposition
\ref{prop3.3}, and
$$
I(u)=\frac{1}{2}\int_{\r^N} |(-\Delta)^{\frac s2} u|^2-\frac{1}{2^*_s}\int_{\r^N} K\big(\frac{|x|}{\nu}\big)|u|^{2^*_s} .
$$
\begin{proposition}\label{prop4.1}
We have
\begin{eqnarray*}
F(r,\va)&=&I(U_{r,\va})+O\left(\frac{k}{\nu^{m+\epsilon}}\right)\\
&=&k\Big(A+\frac{B_0}{\va^{m}\nu^{m}}+\frac{B_1}{\va^{m-2}\nu^{m}}\Big(\nu r_{0}-|x^1|\Big)^{2}\\
&&\,\,\,\,\,\,-\sum_{i=2}^{k}\frac{B_2}{\va^{N-2s}|x^1-x^i|^{N-2s}}
+O\Big(\frac{1}{\nu^{m+\epsilon}}+\frac{1}{\nu^{m}}|\nu r_{0}-|x^1||^{3}\Big)\Big)\\
&=&k\Big(A+\frac{B_0}{\va^{m}\nu^{m}}+\frac{B_1}{\va^{m-2}\nu^{m}}\Big(\nu r_{0}-r\Big)^{2}\\
&&\,\,\,\,\,\,-\frac{B_3k^{N-2s}}{\va^{N-2s}r^{N-2s}}
+O\Big(\frac{1}{\nu^{m+\epsilon}}+\frac{1}{\nu^{m}}|\nu r_{0}-|x^1||^{3}+\frac{k}{r^{N-2s}}\Big)\Big).\\
\end{eqnarray*}
where $\epsilon>0$ is a fixed constant, $B_{i}>0,i=0,1,2,3$ are some constants.
\end{proposition}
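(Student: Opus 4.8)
The plan is to split the proof into two stages. Stage~(A): show that replacing $U_{r,\va}+\varphi$ by $U_{r,\va}$ costs only $O\big(k\nu^{-m-\epsilon}\big)$ in energy, so that $F(r,\va)=I(U_{r,\va})+O\big(k\nu^{-m-\epsilon}\big)$. Stage~(B): carry out the energy expansion of $I(U_{r,\va})$ itself — the expansion proved in the Appendix — isolating the bump self-energy, the correction produced by $K(|x|/\nu)-1$ near each bump, and the pairwise interactions, and then rewrite the interaction sum in the closed form of the third line. The three displayed identities follow by combining (A) and (B).

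For Stage~(A), write $I(U_{r,\va}+\varphi)-I(U_{r,\va})=\langle I'(U_{r,\va}),\varphi\rangle+\tfrac12\langle I''(U_{r,\va})\varphi,\varphi\rangle+R$, where $R$ collects the third- and higher-order terms of $t\mapsto-\tfrac1{2^*_s}\int K(|x|/\nu)|U_{r,\va}+t\varphi|^{2^*_s}$. Since each bump satisfies $(-\Delta)^sU_{x^i,\va}=U_{x^i,\va}^{2^*_s-1}$, one has $I'(U_{r,\va})=-l_k$ with $l_k$ as in \eqref{3.19}, so by the symmetry of $l_k$ and $\varphi$ in $\mathscr H$ together with the weighted estimates of the Appendix, $|\langle I'(U_{r,\va}),\varphi\rangle|\le Ck\|l_k\|_{**}\|\varphi\|_{*}$. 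Testing \eqref{3.19} against $\varphi$ and using the orthogonality constraints to discard the $c_l$-terms gives $\langle I''(U_{r,\va})\varphi,\varphi\rangle=\langle N(\varphi)+l_k,\varphi\rangle$, bounded by $Ck\big(\|N(\varphi)\|_{**}+\|l_k\|_{**}\big)\|\varphi\|_{*}$; and the pointwise Taylor estimate for $|t|^{2^*_s}$ gives $|R|\le Ck\|\varphi\|_{*}^{\,1+\min\{2^*_s-1,2\}}$. Inserting $\|\varphi\|_{*}\le C\nu^{-m/2-\epsilon}$ and $\|l_k\|_{**}\le C\nu^{-m/2-\epsilon}$ (Proposition~\ref{prop3.3}, Lemma~\ref{lm3.4}) and $\|N(\varphi)\|_{**}\le C\|\varphi\|_{*}^{\min\{2^*_s-1,2\}}$ (Lemma~\ref{lm3.3}), each term is $O\big(k\nu^{-m-2\epsilon}\big)$, giving Stage~(A) after shrinking $\epsilon$.

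For Stage~(B), write $I(U_{r,\va})=\tfrac12\int|(-\Delta)^{s/2}U_{r,\va}|^2-\tfrac1{2^*_s}\int K(|x|/\nu)U_{r,\va}^{2^*_s}$. By $(-\Delta)^sU_{r,\va}=\sum_iU_{x^i,\va}^{2^*_s-1}$ and integration by parts, the quadratic part equals $\sum_{i,j}\int U_{x^i,\va}^{2^*_s-1}U_{x^j,\va}$, whose diagonal is $k\int U_{0,1}^{2^*_s}$ and whose off-diagonal part is the sum of interactions. Expanding $U_{r,\va}^{2^*_s}=\sum_iU_{x^i,\va}^{2^*_s}+2^*_s\sum_{i\ne j}U_{x^i,\va}^{2^*_s-1}U_{x^j,\va}+(\text{lower order})$ and using $K(|x|/\nu)=1+O(\nu^{-m})$ on the bulk of each bump, the interaction contributions of the two pieces combine into $-\sum_{i\ne j}\int U_{x^i,\va}^{2^*_s-1}U_{x^j,\va}$ modulo lower-order errors; evaluating this integral by the change of variables centered at $x^i$ (the bumps are at mutual distance $\sim\nu/k$, large compared with the bump width) and using the $k$-fold symmetry produces $-k\sum_{i=2}^kB_2\,\va^{-(N-2s)}|x^1-x^i|^{-(N-2s)}$ plus a strictly lower-order error. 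In the remaining diagonal piece $\tfrac12k\int U_{0,1}^{2^*_s}-\tfrac1{2^*_s}\sum_i\int K(|x|/\nu)U_{x^i,\va}^{2^*_s}$, insert $K(|x|/\nu)=1-c_0\big||x|/\nu-r_0\big|^m+O\big(\big||x|/\nu-r_0\big|^{m+\theta}\big)$, center at $x^i$ and Taylor-expand $|x|/\nu-r_0$ in powers of $(|x^i|-\nu r_0)/\nu$ and of the rescaled variable: the leading part assembles into $kA$, the $m$-th moment of the bump profile produces $kB_0\va^{-m}\nu^{-m}$, the first-order term in $|x^1|-\nu r_0$ cancels by parity, its second-order term produces $kB_1\va^{-(m-2)}\nu^{-m}(\nu r_0-|x^1|)^2$, and everything else is $O\big(k\nu^{-m-\epsilon}+k\nu^{-m}|\nu r_0-|x^1||^{3}\big)$. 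The integrability of these moments and the convergence of the interaction sum are exactly where the restriction $\max\{2,\,N-2s-2(N-2s)^2/(N+2s)\}<m<N-2s$ and the hypothesis $N>2+2s$ are needed. Finally, $r=|x^1|$ and $|x^1-x^i|=2r\sin\tfrac{(i-1)\pi}{k}$, so $\sum_{i=2}^k|x^1-x^i|^{-(N-2s)}=(2r)^{-(N-2s)}\sum_{j=1}^{k-1}\sin^{-(N-2s)}(j\pi/k)$; since $N-2s>2>1$ this sum is dominated by its extreme terms, giving $\sum_{j=1}^{k-1}\sin^{-(N-2s)}(j\pi/k)=c_1k^{N-2s}+(\text{lower order})$, whence $\sum_{i=2}^k|x^1-x^i|^{-(N-2s)}=B_3'k^{N-2s}r^{-(N-2s)}+O\big(kr^{-(N-2s)}\big)$, which converts the second line into the third with a new constant $B_3>0$ and the stated extra error.

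I expect Stage~(B) to be the hard part: one must check that the kinetic and potential interaction terms cancel to leading order so that the residual interaction error is strictly smaller than $\nu^{-m}$ (this is the step forcing the lower bound on $m$), and one must keep every error uniform in $k$ — the energy has size $k$, while the tracked corrections have relative size $\nu^{-m-\epsilon}$ — which is precisely the bookkeeping carried out in the Appendix.
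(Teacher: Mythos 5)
Your proposal is correct and takes essentially the same route as the paper: the paper Taylor-expands $I$ at $U_{r,\va}+\varphi$ (using $\langle I'(U_{r,\va}+\varphi),\varphi\rangle=0$) and then quotes the Appendix expansion of $I(U_{r,\va})$ (Proposition \ref{propA.1}) plus $\sum_{i=2}^{k}|x^1-x^i|^{-(N-2s)}=B_3k^{N-2s}r^{-(N-2s)}+O\big(kr^{-(N-2s)}\big)$, while you expand at $U_{r,\va}$ and re-derive that expansion, which reduces to the same $\|\cdot\|_{*}$, $\|\cdot\|_{**}$ estimates on $l_k$, $N(\varphi)$ and $\varphi$. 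The only blemish is cosmetic: the kinetic and potential cross terms combine to $-\tfrac12\sum_{i\neq j}\int U_{x^i,\va}^{2^*_s-1}U_{x^j,\va}$ rather than $-\sum_{i\neq j}\int U_{x^i,\va}^{2^*_s-1}U_{x^j,\va}$, a harmless factor since $B_2$ is an unspecified positive constant.
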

\begin{proof}
Since
$$
\langle I'(U_{r,\va}+\varphi),\varphi\rangle=0,\,\,\,\forall \varphi\in \mathcal{N},
$$
there is $t\in(0,1)$ such that
\begin{eqnarray*}
F(r,\va)&=&I(U_{r,\va}+\varphi)\\
&=&I(U_{r,\va})-\frac{1}{2}D^{2}I(U_{r,\va}+t\varphi)(\varphi,\varphi)\\
&=&I(U_{r,\va})-\frac{1}{2}\int_{\r^N}\Big(\left|\nabla\varphi\right|^{2}
-(2^*_s-1)K\Big(\frac{|x|}{\nu}\Big)(U_{r,\va}+t\varphi)^{2^*_s-2})\varphi^{2}\Big)\\
&=&I(U_{r,\va})-\frac{1}{2}\int_{\r^N}(N(\varphi)+l_{k})\varphi\\
&&+\frac{2^*_s-1}{2}\int_{\r^N} K\big(\frac{|x|}{\nu}\big)\big((U_{r,\va}+t\varphi)^{2^*_s-2}-
U_{r,\va}^{2^*_s-2}\big)\varphi^{2}\\
&=&I(U_{r,\va})+O\Big(\int_{\r^N}\Big(|\varphi|^{2^*_s}+|N(\varphi)||\varphi|+|l_{k}||\varphi|\Big)\Big).
\end{eqnarray*}
However,
\begin{eqnarray*}
&&\int_{\r^N}(|N(\varphi)||\varphi|+|l_k||\varphi|)\\
&\leq &
C(\|N(\varphi)\|_{**}+\|l_{k}\|_{**})\|\varphi\|_{*}\int_{\r^N}\sum_{i=1}^{k}\frac{1}{(1+|x-x^{i}|)^{\frac{N+2s}{2}+\tau}}
\sum_{j=1}^{k}\frac{1}{(1+|x-x^j|)^{\frac{N-2s}{2}+\tau}}.
\end{eqnarray*}
Since $\tau=\frac{N-2s-m}{N-2s}$, $\sum\limits_{i=2}^k\frac{1}{|x^i-x_{1}|^{\tau}}\leq C,$ and Lemma \ref{lmA.1}, we obtain
\begin{eqnarray*}
&&\sum_{i=1}^k\frac{1}{(1+|x-x^i|)^{\frac{N+2s}{2}+\tau}}
\sum_{j=1}^{k}\frac{1}{(1+|x-x^j|)^{\frac{N-2s}{2}+\tau}}\\
&=&\sum_{i=1}^{k}\frac{1}{(1+|x-x^i|)^{N+2\tau}}
+\sum_{j=1}^{k}\sum_{j\neq i}
\frac{1}{(1+|x-x^i|)^{\frac{N+2s}{2}+\tau}}
\frac{1}{(1+|x-x^j|)^{\frac{N-2s}{2}+\tau}}\\
&\leq&\sum_{i=1}^k\frac{1}{(1+|x-x^i|)^{N+2\tau}}
+C\sum_{i=1}^{k}\frac{1}{(1+|x-x^i|)^{N+\tau}}\sum_{j=2}^{k}\frac{1}{|x^j-x^1|^{\tau}}\\
&\leq&C\sum_{i=1}^{k}\frac{1}{(1+|x-x^i|)^{N+\tau}}.
\end{eqnarray*}
Therefore, we see
\begin{eqnarray*}
\int_{\r^N}(|N(\varphi)||\varphi|+|l_k||\varphi|)&\leq&C(\|N(\varphi)\|_{**}
+\|l_{k}\|_{**})\|\varphi\|_{*}\int_{\r^N}\sum_{i=1}^{k}\frac{1}{(1+|x-x^i|)^{N+\tau}}\\
&\leq & Ck(\|N(\varphi)\|_{**}+\|l_{k}\|_{**})\|\varphi\|_{*}\leq
Ck\Big(\frac{1}{\nu^{m+\epsilon}}\Big).
\end{eqnarray*}
On the other hand, by H\"{o}lder inequality, we obtain
\begin{eqnarray*}
\int|\varphi|^{2^*_s}&\leq & C\|\varphi\|_{*}^{2^*_s}\int\Big(\sum_{i=1}^{k}\frac{1}{(1+|x-x^i|)^{\frac{N-2s}{2}+\tau}}\Big)^{2^*_s}\\
&\leq & C\|\varphi\|_{*}^{2^*_s}\int\Big(\sum_{i=1}^{k}\frac{1}{(1+|x-x^i|)^{N+\tau}}\Big)
\Big(\sum_{i=1}^{k}\frac{1}{(1+|x-x^i|)^{\tau}}\Big)^{2^*_s-1}\\
&\leq & C'\|\varphi\|_{*}^{2^*_s}\int\sum_{i=1}^{k}\frac{1}{(1+|x-x^i|)^{N+\tau}}\\
&\leq& C k\|\varphi\|_{*}^{2^*_s}\\
&\leq& Ck\Big(\frac{1}{\nu^{m+\epsilon}}\Big).
\end{eqnarray*}
Therefore applying Proposition \ref{propA.1} we have
\begin{eqnarray*}
F(r,\va)
&=&I(U_{r,\va})+O\left(\frac{k}{\nu^{m+\epsilon}}\right)\\
&=&k\Big(A+\frac{B_{0}}{\va^{m}\nu^{m}}+\frac{B_{1}}{\va^{m-2}\nu^{m}}\Big(\mu r_{0}-|x^1|\Big)^{2}\\
&&\,\,\,\,\,\,-\sum_{i=2}^{k}\frac{B_{2}}{\va^{N-2s}|x^1-x^i|^{N-2s}}
+O\Big(\frac{1}{\nu^{m+\epsilon}}+\frac{1}{\nu^{m}}|\nu r_{0}-|x^1||^{3}\Big)\Big).
\end{eqnarray*}
There is a constant $B_3>0,$ such that
$$
\sum_{i=2}^k\frac {1}{|x^i-x^1|^{N-2s}}=\frac{B_3 k^{N-2s}}{|x^1|^{N-2s}}+O\Big(\frac k{|x^1|^{N-2s}}\Big).
$$
So
\begin{eqnarray*}
F(r,\va)&=&k\Big(A+\frac{B_{0}}{\va^{m}\nu^{m}}+\frac{B_{1}}{\va^{m-2}\nu^{m}}\Big(\nu r_{0}-r\Big)^{2}\\
&&\,\,\,\,\,\,-\frac{B_{3}k^{N-2s}}{\va^{N-2s}r^{N-2s}}
+O\Big(\frac{1}{\nu^{m+\epsilon}}+\frac{1}{\nu^{m}}|\nu r_{0}-|x^1||^{3}+\frac{k}{r^{N-2s}}\Big)\Big).
\end{eqnarray*}
\end{proof}

\begin{proposition}\label{prop4.2}
\begin{eqnarray*}
\frac{\partial F(r,\va)}{\partial \va}&=&k\Big(-\frac{B_0m}{\va^{m+1}\nu^{m}}+\sum_{i=2}^{k}\frac{B_2(N-2s)}{\va^{N-2s+1}|x^1-x^i|^{N-2s}}\\
&&\ \ \ \ +O\Big(\frac{1}{\nu^{m+\epsilon}}+\frac{1}{\nu^{m}}|\nu r_{0}-|x^1||^{2}\Big)\Big)\\
&=&k\Big(-\frac{B_0m}{\va^{m+1}\nu^{m}}+\frac{B_3(N-2s)k^{N-2s}}{\va^{N-2s+1}r^{N-2s}}\\
&&\ \ \ \ +O\Big(\frac{1}{\nu^{m+\epsilon}}+\frac{1}{\nu^{m}}|\nu r_{0}-|x^1||^{2}+\frac{k}{r^{N-2s}}\Big)\Big).\\
\end{eqnarray*}
where $\epsilon>0$ is a fixed constant.
\end{proposition}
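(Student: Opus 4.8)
\emph{Proof strategy.} The plan is to differentiate the energy expansion of Proposition~\ref{prop4.1} in $\va$: after peeling off two manifestly lower-order correction terms one is left with exactly $\frac{\partial}{\partial\va}I(U_{r,\va})$, and this is evaluated by differentiating, term by term, the explicit bubble-built expansion that underlies Proposition~\ref{prop4.1}. The first thing I would establish is that the map $\va\mapsto\varphi(r,\va)$ supplied by Proposition~\ref{prop3.3} is $C^1$ into the $\|\cdot\|_*$-space, with the quantitative bound $\|\partial_\va\varphi\|_*\le C\nu^{-\frac m2-\epsilon}$. This follows by differentiating the fixed-point identity $\varphi=L_k(N(\varphi))+L_k(l_k)$ in $\va$: $L_k$ is bounded uniformly in $k$ by Proposition~\ref{prop3.2}, $N'(\varphi)$ is a small perturbation when $\|\varphi\|_*$ is small (the differential version of Lemma~\ref{lm3.3}, already used inside Proposition~\ref{prop3.3} to produce the contraction), and $\partial_\va l_k$ obeys the same $\nu$-bound as $l_k$ by rerunning Lemma~\ref{lm3.4} with $Z_{i,2}=\partial_\va U_{x^i,\va}$ in place of $U_{x^i,\va}$; the implicit function theorem then yields both the $C^1$ dependence and the stated bound.

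Granting this, the chain rule gives
$$\frac{\partial F(r,\va)}{\partial\va}=I'(U_{r,\va})[\partial_\va U_{r,\va}]+\bigl(I'(U_{r,\va}+\varphi)-I'(U_{r,\va})\bigr)[\partial_\va U_{r,\va}]+I'(U_{r,\va}+\varphi)[\partial_\va\varphi].$$
The third term is lower order: by \eqref{3.19} together with $\sum_i U_{x^i,\va}^{2^*_s-1}=(-\Delta)^sU_{r,\va}$ one has $I'(U_{r,\va}+\varphi)[\psi]=\sum_{l=1}^2 c_l\sum_{i=1}^k\langle U_{x^i,\va}^{2^*_s-2}Z_{i,l},\psi\rangle$ for every $\psi$, and differentiating the constraints $\langle U_{x^i,\va}^{2^*_s-2}Z_{i,l},\varphi\rangle=0$ in $\va$ gives $\langle U_{x^i,\va}^{2^*_s-2}Z_{i,l},\partial_\va\varphi\rangle=-\langle\partial_\va(U_{x^i,\va}^{2^*_s-2}Z_{i,l}),\varphi\rangle=O(\|\varphi\|_*)$; with $|c_l|\le C\nu^{-\frac m2-\epsilon}$ this term is $O(k\nu^{-m-2\epsilon})$. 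For the second term, $(-\Delta)^s\partial_\va U_{r,\va}=(2^*_s-1)\sum_iU_{x^i,\va}^{2^*_s-2}Z_{i,2}$ together with the orthogonality $\langle U_{x^i,\va}^{2^*_s-2}Z_{i,2},\varphi\rangle=0$ annihilates the $D^s$-pairing, so it reduces to $-\int K\big(\frac{|x|}{\nu}\big)\bigl((U_{r,\va}+\varphi)^{2^*_s-1}-U_{r,\va}^{2^*_s-1}-(2^*_s-1)U_{r,\va}^{2^*_s-2}\varphi\bigr)\partial_\va U_{r,\va}$ plus cross-bubble interaction terms arising from $U_{r,\va}^{2^*_s-2}\neq\sum_iU_{x^i,\va}^{2^*_s-2}$ and from $K-1$; these are estimated exactly as the analogous terms $\int(|N(\varphi)||\varphi|+|l_k||\varphi|)$ in the proof of Proposition~\ref{prop4.1}, via Lemmas~\ref{lmA.1}, \ref{lm3.3} and \ref{lm3.4}, and are $O(k\nu^{-m-\epsilon})$.

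Finally $I'(U_{r,\va})[\partial_\va U_{r,\va}]=\frac{\partial}{\partial\va}I(U_{r,\va})$, and since the expansion of $I(U_{r,\va})$ behind Proposition~\ref{prop4.1} (i.e.\ Proposition~\ref{propA.1}) is built from integrals of the bubbles $U_{x^i,\va}$ and of $K\big(\frac{|x|}{\nu}\big)-1$ that are smooth in $\va\in[\va_0,\va_1]$ with all $\va$-derivatives controlled by the same bounds, one may differentiate it term by term: the derivative of $\frac{B_0}{\va^m\nu^m}$ is $-\frac{B_0m}{\va^{m+1}\nu^m}$, that of $-\frac{B_2}{\va^{N-2s}|x^1-x^i|^{N-2s}}$ is $\frac{B_2(N-2s)}{\va^{N-2s+1}|x^1-x^i|^{N-2s}}$, that of $\frac{B_1}{\va^{m-2}\nu^m}(\nu r_0-|x^1|)^2$ is $O(\nu^{-m}|\nu r_0-|x^1||^2)$ hence absorbed into the error, and the remainder stays $O(\nu^{-m-\epsilon})$ after possibly shrinking $\epsilon$. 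Summing over the $k$ bubbles gives the first displayed identity, and replacing $\sum_{i=2}^k|x^1-x^i|^{-(N-2s)}$ by $B_3k^{N-2s}|x^1|^{-(N-2s)}+O(k|x^1|^{-(N-2s)})$ as in Proposition~\ref{prop4.1} yields the second. I expect the first step to be the only real obstacle — the $C^1$ control of $\varphi$ in $\va$ with $\|\partial_\va\varphi\|_*\le C\nu^{-\frac m2-\epsilon}$ — since everything afterwards is a differentiated rerun of the proof of Proposition~\ref{prop4.1}.
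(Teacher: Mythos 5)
Your proposal is correct and follows essentially the same route as the paper: write $\partial_\va F=\langle I'(U_{r,\va}+\varphi),\partial_\va U_{r,\va}+\partial_\va\varphi\rangle$, use equation \eqref{3.19} together with the differentiated orthogonality constraints $\langle U_{x^i,\va}^{2^*_s-2}Z_{i,l},\partial_\va\varphi\rangle=-\langle\partial_\va(U_{x^i,\va}^{2^*_s-2}Z_{i,l}),\varphi\rangle$ and the bound on $c_l$ to dispose of the $\partial_\va\varphi$ term, kill the gradient cross-term by orthogonality, estimate $\int K\big[(U_{r,\va}+\varphi)^{2^*_s-1}-U_{r,\va}^{2^*_s-1}\big]\partial_\va U_{r,\va}$ as in Proposition \ref{prop4.1}, and conclude with the expansion of $\partial_\va I(U_{r,\va})$ (Proposition \ref{propA.2}). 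Your preliminary quantitative bound $\|\partial_\va\varphi\|_*\le C\nu^{-m/2-\epsilon}$ is in fact never used (only differentiability of $\varphi$ in $\va$ is needed, exactly as the paper tacitly assumes), so the step you flag as the main obstacle can be reduced to the standard $C^1$ statement from the contraction/IFT setup.
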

\begin{proof}
We have
\begin{equation*}\label{4.1}
\begin{array}{ll}
\ds\frac{\partial F(r,\va)}{\partial\va}
&=\Big\langle I'(U_{r,\va}+\varphi),\ds\frac{\partial U_{r,\va}}{\partial \va}+\ds\frac{\partial \varphi}{\partial \va}\Big\rangle\vspace{0.2cm}\\
&=\Big\langle I'(U_{r,\va}+\varphi),\ds\frac{\partial
U_{r,\va}}{\partial \va}\Big\rangle
+\ds\sum_{l=1}^{2}\ds\sum_{i=1}^{k}c_{l}\Big\langle U_{x^i,\va}^{2^*_s-2}
Z_{i,l},\frac{\partial \varphi}{\partial
\va}\Big\rangle\vspace{0.2cm}\\
&=\ds\frac{\partial I(U_{r,\va})}{\partial \va}-
\int_{\r^N} K\big(\frac{|x|}{\nu}\big)\big[(U_{r,\va}+\varphi)^{2^*_s-1}-U_{r,\va}^{2^*_s-1}\big]\ds\frac{\partial
U_{r,\va}}{\partial \va}\vspace{0.2cm}\\
&\,\,\,\,\,\,\,+\ds\sum_{l=1}^{2}\ds\sum_{i=1}^{k}c_{l}\Big\langle U_{x^i,\va}^{2^*_s-2}
Z_{i,l},\frac{\partial \varphi}{\partial
\va}\Big\rangle.
\end{array}
\end{equation*}
Note that
$$
\Big\langle U_{x^i,\va}^{2^*_s-2}Z_{i,l},\frac{\partial \varphi}{\partial
\va}\Big\rangle =-\Big\langle \frac{\partial
\big(U_{x^i,\va}^{2^*_s-2}Z_{i,l}\big)}{\partial \va},\varphi\Big\rangle.
$$
Thus,
\begin{equation*}\label{4.2}
\begin{array}{ll}
&\Big|\ds\sum_{i=1}^{k}c_{l}\Big\langle
U_{x^i,\va}^{2^*_s-2}Z_{i,l},\frac{\partial \varphi}{\partial
\va}\Big\rangle\Big|\vspace{0.2cm}\\
\leq &
C |c_{l}|\|\varphi\|_{*}\ds\int_{\r^N}\ds\sum_{i=1}^{k}\frac{1}{(1+|x-x^i|)^{\frac{N-2s}{2}+\tau}}
\ds\sum_{j=1}^{k}\frac{1}{(1+|x-x^{j}|)^{N+2s}}\vspace{0.2cm}\\
=&C|c_{l}|\|\varphi\|_{*}\ds\sum_{i=1}^{k}\ds\Big(\int_{\r^N}\frac{1}{(1+|x-x^i|)^{\frac{3N+2s}{2}+\tau}}
+\int_{\r^N}\frac{1}{(1+|x-x^i)^{\frac{N-2s}{2}+\tau}}\sum_{j\neq
i}\frac{1}{(1+|x-x^{j}|)^{N+2s}}\Big)\vspace{0.2cm}\\
\leq &C|c_{l}|\|\varphi\|_{*}\ds\sum_{i=1}^{k}\ds\int_{\r^N} \Big(\frac{1}{(1+|x-x^i|)^{\frac{3N+2s}{2}+\tau}}+
\frac{1}{(1+|x-x^i|)^{N+\frac{m}{2}}}\sum_{i=2}^{k}\frac{1}{|x^i-x^1|^{\frac{N+2s-m}{2}+\tau }}\Big)\vspace{0.2cm}\\
 \leq&
C\ds\frac{k}{\nu^{m+\epsilon}}.
\end{array}
\end{equation*}
On the other hand, from $\varphi\in\mathcal{N}$ we have
\begin{eqnarray*}
&& \ds\int_{\r^N}K\big(\frac{|x|}{\nu}\big)[(U_{r,\va}+\varphi)^{2^*_s-1}-U_{r,\va}^{2^*_s-1}]\frac{\partial U_{r,\va}}{\partial \va}\vspace{0.2cm}\\
&=&\ds\int_{\r^N}(2^*_s-1)K\big(\frac{|x|}{\nu}\big)U_{r,\va}^{2^*_s-1}\frac{\partial U_{r,\va}}{\partial \va}\varphi+O\Big(\int_{\r^N}|\varphi|^2\Big)\vspace{0.2cm}\\
&=&\ds\int_{\r^N}(2^*_s-1)K\big(\frac{|x|}{\nu}\big)\Big(U_{r,\va}^{2^*_s-1}\frac{\partial U_{r,\va}}{\partial \va}\varphi-\sum\limits_{i=1}^k U_{x^i,\va}\frac{\partial U_{x^i,\va}}{\partial \va}\Big)\varphi\vspace{0.2cm}\\
&&+(2^*_s-1)\ds\sum\limits_{i=1}^k\ds\int_{\r^N}\Big(K\big(\frac{|x|}{\nu}\big)-1\Big)U_{x^i,\va}^{2^*_s-2} \frac{\partial U_{x^i,\va}}{\partial \va}\varphi +O\Big(\int_{\r^N}|\varphi|^2\Big)\vspace{0.2cm}\\
&=&k\ds\int_{\Omega_1}(2^*_s-1)K\big(\frac{|x|}{\nu}\big)\Big(U_{r,\va}^{2^*_s-1}\frac{\partial U_{r,\va}}{\partial \va}\varphi-\sum\limits_{i=1}^k U_{x^i,\va}\frac{\partial U_{x^i,\va}}{\partial \va}\Big)\varphi\vspace{0.2cm}\\
&&+k\ds\int_{\r^N}(2^*_s-1)\Big(K\big(\frac{|x|}{\nu}\big)-1\Big)U_{x^1,\va}^{2^*_s-2} \frac{\partial U_{x^1,\va}}{\partial \va}\varphi +O\Big(\int_{\r^N}|\varphi|^2\Big)\vspace{0.2cm}\\
&\leq &Ck \ds\int_{\Omega_1}(2^*_s-1)\Big(U_{x^1,\va}^{2^*_s-2}\sum_{i=2}^kU_{x^i,\va}+\sum_{i=2}^kU_{x^i,\va}^{2^*_s-1}\Big)\varphi\vspace{0.2cm}\\
&&+\Big(\ds\int_{\big||x|-\nu r_0\big|\leq\sqrt{\nu}}+\ds\int_{\big||x|-\nu r_0\big|\geq\sqrt{\nu}}\Big(K\big(\frac{|x|}{\nu}\big)-1\Big)U_{x^1,\va}^{2^*_s-2} \frac{\partial U_{x^1,\va}}{\partial \va}\varphi\Big) +O\Big(\int_{\r^N}|\varphi|^2\Big)\vspace{0.2cm}\\
 &\leq&
C\ds\frac{k}{\nu^{m+\epsilon}}.
\end{eqnarray*}
This completes our proof.
\end{proof}

Let
$\va_{0}$ be the solution of
$$
-\frac{B_0m}{\va^{m+1}}+\frac{B_3(N-2s)}{\va^{N-2s+1}r_{0}^{N-2s}}=0.
$$
Then
$$
\va_{0}=\Big(\frac{B_3(N-2s)}{B_0mr_{0}^{N-2s}}\Big)^{\frac{1}{N-2s-m}}.
$$
Define
$$
\Omega=\Big\{(r,\va):r\in\Big[\nu r_{0}-\frac{1}{\nu^{\bar{\theta}}},\nu r_{0}+\frac{1}{\nu^{\bar{\theta}}}\Big],
\va\in\Big[\va_{0}-\frac{1}{\nu^{\frac{3}{2}\bar{\theta}}},\va_{0}+\frac{1}{\nu^{\frac{3}{2}\bar{\theta}}}\Big]\Big\},
$$
where $\bar{\theta}>0$ is a small constant.

For any $(r,\va)\in \Omega,$ we have
$$
\frac{r}{\nu}=r_{0}+O\Big(\frac{1}{\nu^{1+\bar{\theta}}}\Big).
$$
Thus,
$$
r^{N-2s}=\nu^{N-2s}\Big(r_{0}^{N-2s}+O\Big(\frac{1}{\nu^{1+\bar{\theta}}}\Big)\Big).
$$
So
\begin{equation}\label{F}
\begin{array}{ll}
F(r,\va)
&=k\Big(A+\Big(\ds\frac{B_0}{\va^{m}}-\frac{B_3}{\va^{N-2s}r_{0}^{N-2s}}\Big)\ds\frac{1}{\nu^{m}}
+\ds\frac{B_1}{\va^{m-2}\nu^{m}}(\nu r_{0}-r)^{2}\vspace{0.2cm}
\\
&\,\,\,\,\,\,\,\,+O\Big(\ds\frac{1}{\nu^{m+\epsilon}}+\ds\frac{1}{\nu^{m}}|\nu r_{0}-r|^{3}+\frac{k}{\nu^{N-2s}}\Big)\Big),
\,\,\,(r,\va)\in \Omega,
\end{array}
\end{equation}
and
\begin{equation}\label{F'}
\begin{array}{ll}
\ds\frac{\partial F(r,\va)}{\partial\va}
&=k\Big(\Big(-\ds\frac{B_0m}{\va^{m+1}}+\ds\frac{B_3(N-2s)}{\va^{N-2s+1}r_{0}^{N-2s}}\Big)\ds\frac{1}{\nu^{m}}\\
&\,\,\,\,\,\,\,\,+O\Big(\ds\frac{1}{\nu^{m+\epsilon}}+\ds\frac{1}{\nu^{m}}|\nu r_{0}-r|^{2}+\frac{k}{\nu^{N-2s}}\Big)\Big)\,\,\,(r,\va)\in \Omega.
\end{array}
\end{equation}
Now, we define
$$
\tilde{F}(r,\va)=-F(r,\va),\,\,\,\,(r,\va)\in \Omega.
$$
Let
$$
\alpha_{1}=k\Big(-A-\Big(\frac{B_0}{\va_{0}^{m}}
-\frac{B_3}{\va_{0}^{N-2s}r_{0}^{N-2s}}\Big)\frac{1}{\nu^{m}}
-\frac{1}{\nu^{m+\frac{5}{2}\bar{\theta}}}\Big),\,\,\,
\alpha_{2}=k(-A+\eta),
$$
where $\eta>0$ is a small constant.
For $c\in\r$, define
$$
\tilde{F}^{c}=\left\{(r,\va)\in
\Omega,\,\,\,\tilde{F}(r,\va)\leq c\right\}.
$$
Consider
$$
\left\{%
\begin{array}{ll}
    \ds\frac{dr}{dt}=-D_{r}\tilde{F},\,\,& t>0;\vspace{0.2cm} \\
    \ds\frac{d \va}{dt}=-D_{\va}\tilde{F},\,\,\,&t>0;\vspace{0.2cm} \\
    (r,\va)\in \tilde{F}^{\alpha_{2}}.
\end{array}%
\right.
$$
Then we have
\begin{lemma}\label{lm4.3}
The flow $(r(t),\va(t))$ does not leave
 $\Omega$ before it reaches $ \tilde{F}^{\alpha_{1}}.$
\end{lemma}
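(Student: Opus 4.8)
The plan is a standard deformation (barrier) argument. I would first note that the flow is in fact allowed to start anywhere in $\Omega$: by \eqref{F} one has $F = kA + O(k\nu^{-m})$ on $\Omega$, hence $\tilde F = -F \le -kA + o(k) < k(-A+\eta) = \alpha_2$ for $k$ large, so $\tilde F^{\alpha_2} = \Omega$. It then suffices to show that a trajectory issuing from a point of $\Omega$ (granting, as usual, that $F$ is $C^1$ in $(r,\va)$) can meet $\partial\Omega$ only on the two faces $\{r = \nu r_0 \pm \nu^{-\bar\theta}\}$, and that each point of those faces already belongs to $\tilde F^{\alpha_1}$; the lemma follows immediately from these two facts.

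The geometric input is the local structure of $F$ at $(\nu r_0,\va_0)$. Writing $g(\va) = \frac{B_0}{\va^m} - \frac{B_3}{\va^{N-2s}r_0^{N-2s}}$, the equation defining $\va_0$ is $g'(\va_0)=0$, and a one-line computation using $m<N-2s$ gives $g''(\va_0) = \frac{B_0 m}{\va_0^{m+2}}\big(m-(N-2s)\big) < 0$, so $\va_0$ is a nondegenerate local maximum of $g$; by \eqref{F}, $\tilde F = -F$ equals, modulo lower-order terms, the nondegenerate saddle $-kA - \frac{k}{\nu^m}\big(g(\va) + B_1\va^{2-m}(\nu r_0-r)^2\big)$, a local minimum in $\va$ and a local maximum in $r$ at $(\nu r_0,\va_0)$.

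Next I would establish two quantitative facts. \emph{On the faces $\{r = \nu r_0 \pm \nu^{-\bar\theta}\}$}, where $|\nu r_0 - r| = \nu^{-\bar\theta}$, \eqref{F} and the definition of $\alpha_1$ give
\[
F(r,\va) + \alpha_1 = \frac{k}{\nu^m}\Big( g(\va) - g(\va_0) + \frac{B_1}{\va^{m-2}}\nu^{-2\bar\theta} - \nu^{-\frac{5}{2}\bar\theta}\Big) + O\Big(\frac{k}{\nu^{m+\epsilon}} + \frac{k}{\nu^{m+3\bar\theta}} + \frac{k^2}{\nu^{N-2s}}\Big).
\]
Since $g'(\va_0)=0$ and $|\va-\va_0| \le \nu^{-\frac{3}{2}\bar\theta}$ we have $|g(\va)-g(\va_0)| = O(\nu^{-3\bar\theta})$, while $\frac{B_1}{\va^{m-2}}\nu^{-2\bar\theta} \ge c\,\nu^{-2\bar\theta}$ (here $m>2$ and $\va$ is bounded away from $0$ and $\infty$). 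Substituting $\nu = k^{(N-2s)/(N-2s-m)}$ rewrites $\frac{k^2}{\nu^{N-2s}} = \frac{k}{\nu^m}\,\nu^{-(N-2s-m)(N-2s-1)/(N-2s)}$, whose exponent is positive because $N>2+2s$ forces $N-2s-1>0$. Hence, once $\bar\theta$ is chosen with $2\bar\theta < \min\{\epsilon,\ (N-2s-m)(N-2s-1)/(N-2s)\}$, every error term is $o\big(\frac{k}{\nu^{m+2\bar\theta}}\big)$, so $F + \alpha_1 \ge \frac{c}{2}\frac{k}{\nu^{m+2\bar\theta}} > 0$ for $k$ large, i.e.\ $\tilde F < \alpha_1$ on these faces. \emph{On the face $\{\va = \va_0 + \nu^{-\frac{3}{2}\bar\theta}\}$}, putting $h = g'$ (so $h(\va_0)=0$, $h'(\va_0)=g''(\va_0)<0$), one has $h(\va) = h'(\va_0)\nu^{-\frac{3}{2}\bar\theta} + O(\nu^{-3\bar\theta})$, and \eqref{F'} together with the same bookkeeping (also $\frac{3}{2}\bar\theta < \epsilon$) yields
\[
\frac{d\va}{dt} = -D_\va\tilde F = \frac{\partial F}{\partial\va} = \frac{k}{\nu^m}\,g''(\va_0)\,\nu^{-\frac{3}{2}\bar\theta}\big(1+o(1)\big) < 0,
\]
so $\va$ strictly decreases there and the trajectory enters $\Omega$; on $\{\va = \va_0 - \nu^{-\frac{3}{2}\bar\theta}\}$ the same computation gives $\frac{d\va}{dt} > 0$, again inward.

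With these in hand the conclusion is immediate. Let $T_1$ be the first time the trajectory meets $\partial\Omega$ (if there is no such time, there is nothing to prove). The point at time $T_1$ lies either on a face $\{\va = \va_0 \pm \nu^{-\frac{3}{2}\bar\theta}\}$ with $r$ strictly interior, or on a face $\{r = \nu r_0 \pm \nu^{-\bar\theta}\}$ (corners counted among the latter). The first alternative contradicts the minimality of $T_1$, since by the last computation the velocity points strictly into $\Omega$ there, so $\va(t)$ would already have left its admissible interval for $t$ slightly below $T_1$. Therefore the trajectory can reach $\partial\Omega$ only on an $r$-face, where $\tilde F < \alpha_1$; hence it has entered $\tilde F^{\alpha_1}$ no later than it leaves $\Omega$, which is the assertion. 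The main obstacle is precisely the pair of displayed estimates: one must make the sign of $F+\alpha_1$ on the $r$-faces and of $\partial_\va F$ on the $\va$-faces robust against all three error terms of \eqref{F}--\eqref{F'} simultaneously, and this is what pins down the smallness of $\bar\theta$ relative to $\epsilon$ and to $(N-2s-m)(N-2s-1)/(N-2s)$ --- the latter being where the hypothesis $N>2+2s$ is used.
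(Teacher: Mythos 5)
Your proof is correct and follows essentially the same route as the paper: on the faces $\va=\va_0\pm\nu^{-\frac32\bar\theta}$ you use \eqref{F'} and the nondegeneracy $g''(\va_0)<0$ (coming from $m<N-2s$) to show the negative gradient flow points into $\Omega$, and on the faces $|r-\nu r_0|=\nu^{-\bar\theta}$ you use \eqref{F} and the definition of $\alpha_1$ to show $\tilde F<\alpha_1$, exactly as in the paper's argument. Your additional bookkeeping (rewriting $k^{2}/\nu^{N-2s}$ in terms of $\nu^{-m}$ and pinning down the smallness of $\bar\theta$, plus the first-exit-time reasoning) only makes explicit what the paper leaves implicit.
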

\begin{proof}
If $\va=\va_{0}+\frac{1}{\nu^{\frac{3}{2}\bar{\theta}}}$, observing that
$|r-\nu r_{0}|\leq\frac{1}{\nu^{\bar{\theta}}}$, it follows from \eqref{F'} that
$$
\begin{array}{ll}
\ds\frac{\partial \tilde{F}(r,\va)}{\partial\va}
=k\Big(c'\frac{1}{\nu^{m+\frac{3}{2}\bar{\theta}}}+O\Big(\frac{1}{\nu^{m+2\bar{\theta}}}\Big)\Big)
>0.
\end{array}
$$
Hence the flow does not leave $\Omega.$

Similarly, if $\va=\va_{0}-\frac{1}{\nu^{\frac{3}{2}\bar{\theta}}}$, it follows from \eqref{F'} that
$$
\ds\frac{\partial \tilde{F}(r,\va)}{\partial\va}
=k\Big(-c'\frac{1}{\nu^{m+\frac{3}{2}\bar{\theta}}}+O\Big(\frac{1}{\nu^{m+2\bar{\theta}}}\Big)\Big)
<0.
$$
Therefore the flow does not leave $\Omega.$

Now suppose that $|r-\nu r_{0}|=\frac{1}{\nu^{\bar{\theta}}}$.
Since $|\va-\va_{0}|\leq\frac{1}{\nu^{\frac{3}{2}\bar{\theta}}}$, we see
\begin{eqnarray*}
\frac{B_0}{\va^{m}}
-\frac{B_3}{\va^{N-2s}r_{0}^{N-2s}}&=&\frac{B_0}{\va_{0}^{m}}
-\frac{B_3}{\va_{0}^{N-2s}r_{0}^{N-2s}}+O(|\va-\va_{0}|^{2})\\
&=&\frac{B_{1}}{\va_{0}^{m}}
-\frac{B_{4}}{\va_{0}^{N-2s}r_{0}^{N-2s}}+O\left(\frac{1}{\nu^{3\bar{\theta}}}\right).
\end{eqnarray*}

So it follows from \eqref{F} that
\begin{equation}\label{4.6}
\begin{array}{ll}
\tilde{F}(r,\Lambda)&=
k\Big(-A-\Big(\ds\frac{B_{1}}{\va_{0}^{m}}-\frac{B_{4}}{\va_{0}^{N-2s}r_{0}^{N-2s}}\Big)\ds\frac{1}{\nu^{m}}
-\ds\frac{B_{2}}{\va_{0}^{m-2}\nu^{m}}(\nu r_{0}-r)^{2}
+O\Big(\ds\frac{1}{\nu^{m+3\bar{\theta}}}\Big)\Big)\vspace{0.2cm}
\\
&\leq
k\Big(-A-\Big(\ds\frac{B_{1}}{\va_{0}^{m}}-\frac{B_{4}}{\va_{0}^{N-2s}r_{0}^{N-2s}}\Big)\ds\frac{1}{\nu^{m}}
-\ds\frac{B_{2}}{\va_{0}^{m-2}\nu^{m+2\bar{\theta}}}
+O\Big(\ds\ds\frac{1}{\nu^{m+3\bar{\theta}}}\Big)\Big)\vspace{0.2cm}\\
&<\alpha_{1}.
\end{array}
\end{equation}
\end{proof}

\begin{proof}[\textbf{Proof of Theorem \ref{th3}}]
 We will prove that
$\tilde{F},$ and therefore $F$ has a critical point in $\Omega.$

Define
$$
\begin{array}{ll}
\Gamma=\Big\{&g:g(r,\va)=(g_{1}(r,\va),
g_{2}(r,\va))\in
\Omega,(r,\va)\in\Omega,
\vspace{0.2cm}\\
&g(r,\va)=(r,\va),\text{if}\,\,|r-\nu r_{0}|=\ds\frac{1}{\nu^{\bar{\theta}}}\Big\}.
\end{array}
$$
Denote
$$
c=\ds\inf_{g\in\Gamma}\max_{(r,\va)\in\Omega}\tilde{F}
(g(r,\va)).
$$
We claim that $c$ is a critical value of $\tilde{F}$. In order
to prove this, we have to prove
$$
\begin{array}{ll}
& (i) \,\,\,\,\alpha_{1}<c<\alpha_{2};
\vspace{0.2cm}\\
& (ii)\,\,\ds\sup_{|r-\mu r_{0}|=\frac{1}{\nu^{\bar{\theta}}}}\tilde{F}
(g(r,\va))<\alpha_{1},\,\,\,\forall g\in\Gamma.
\end{array}
$$
In order to prove (ii), let $g \in\Gamma.$ Then for any $r$
with $|r-\nu r_{0}|=\frac{1}{\nu^{\bar{\theta}}}$, we have
$g(r,\va)=(r,\va).$ Hence, from
\eqref{4.6}, we obtain
$$
\tilde{F}\left(g(r,\va)\right)=\tilde{F}(r,\va)
<\alpha_{1}.
$$
Now we prove (i). It is obvious that $c<\alpha_{2}.$
For any $\gamma=(\gamma_{1},\gamma_{2})\in\Gamma$, then
$g_{1}(r,\va)=r$, if
$|r-\nu r_{0}|=\frac{1}{\nu^{\bar{\theta}}}$. Define
$$
\tilde{g}_{1}(r)=g_{1}\left(r,\va_{0}\right).
$$
Then $\tilde{g}_{1}(r)=r$ if $|r-\nu r_{0}|=\frac{1}{\nu^{\bar{\theta}}}$.
Hence there is an
$\tilde{r}\in\left(\nu r_{0}-\frac{1}{\nu^{\bar{\theta}}},\nu r_{0}+\frac{1}{\nu^{\bar{\theta}}}\right)$
such that
$$
\tilde{g}_{1}(\tilde{r})=\nu r_{0}.
$$
Let
$\tilde{\va}=g_{2}\left(\tilde{r},\va_{0}\right).$
Then it follows from \eqref{F} that
$$
\begin{array}{ll}
&\ds\max_{(r,\va)\in\Omega}\tilde{F}(g(r,\va))\geq
\tilde{F}\Big(g\Big(\tilde{r},\va_{0}\Big)\Big)
=\tilde{F}(\nu r_{0},\tilde{\va})\vspace{0.2cm}\\
=&k\Big(-A-\Big(\ds\frac{B_0}{\tilde{\va}^{m}}
-\ds\frac{B_3}{\tilde{\va}^{N-2s}r_0^{N-2s}}\Big)\ds\frac{1}{\nu^{m}}
+O\Big(\ds\frac{1}{\nu^{m+\epsilon}}+\frac{k}{\nu^{N-2s}}\Big)\Big)
\vspace{0.2cm}\\
\ge &k\Big(-A-\Big(\ds\frac{B_0}{\va_0^{m}}
-\ds\frac{B_3}{\va_{0}^{N-2s}r_0^{N-2s}}\Big)\ds\frac{1}{\nu^{m}}
+O\Big(\ds\frac{1}{\nu^{m+3\bar{\theta}}}\Big)\Big)>\alpha_{1}.
\end{array}
$$
\end{proof}

Now we come to give the sketch of proof for Theorem \ref{th4}.
Recall that
$$\bar U_{r,\va}(x)=\sum_{i=1}^{2k}(-1)^{i-1}U_{\bar x^i}(x).$$
We will seek for  a
 solution for  equation \eqref{eq} of the form $\bar U_{r,\va}(x)+\bar\varphi$
 with $\bar\varphi=\bar\varphi(r,\varepsilon)$ solved \eqref{3.19}.
To this end,
we should also perform the same procedure as the proof of Theorem
\ref{th3}.

Proceeding as we proved Propositions \ref{prop4.1} and \ref{prop4.2}, we conclude that
\begin{eqnarray*}
\bar F(r,\va)&:=&I(\bar U_r(x)+\bar\varphi)=k\Big(A-\frac{B'_{0}}{\va^{m}\nu^{m}}-\frac{B'_{1}}{\va^{m-2}\nu^{m}}\Big(\nu r_{0}-r\Big)^{2}\\
&&\,\,\,\,\,\,+\frac{B'_{3}k^{N-2s}}{\va^{N-2s}r^{N-2s}}
+O\Big(\frac{1}{\nu^{m+\epsilon}}+\frac{1}{\nu^{m}}|\nu r_{0}-|\bar x^1||^{3}+\frac{k}{r^{N-2s}}\Big)\Big)
\end{eqnarray*}
and
\begin{eqnarray*}
\frac{\partial \bar F(r,\va)}{\partial \va}
&=&k\Big(\frac{B'_0m}{\va^{m+1}\nu^{m}}-\frac{B'_3(N-2s)k^{N-2s}}{\va^{N-2s+1}r^{N-2s}}\\
&&\ \ \ \ +O\Big(\frac{1}{\nu^{m+\epsilon}}+\frac{1}{\nu^{m}}|\nu r_{0}-|\bar x^1||^{2}+\frac{k}{r^{N-2s}}\Big)\Big).\\
\end{eqnarray*}

Let $\varepsilon_0$ and $\Omega$ be given as above. Define
$$
\alpha_{1}=k\Big(A-\Big(\frac{B'_0}{\va_{0}^{m}}
-\frac{B'_3}{\va_{0}^{N-2s}r_{0}^{N-2s}}\Big)\frac{1}{\nu^{m}}
+\frac{1}{\nu^{m+\frac{5}{2}\bar{\theta}}}\Big),\,\,\,
\alpha_{2}=k(A+\eta),
$$
where $\eta>0$ is a small constant so that $\alpha_{1}<\alpha_{2}$.

Then proceeding as done in the proof of Theorem
\ref{th3}, we can prove Theorem \ref{th4}.

\appendix

\section{{ Energy expansion}}

In this section, we will give some basic estimates and the energy expansion for the
approximate solutions. Recall
$$
x^i=\Big(r\cos\frac{2(i-1)\pi}{k},r\sin\frac{2(i-1)\pi}{k}, 0\Big),
\ i=1,\cdots,k,\ 0\in \r^{N-2},
$$
$$
\Omega_i=\Big\{x=(x',x'')\in \r^2\times
\r^{N-2}:\langle\frac{x'}{|x'|},\frac{x^i}{|x^i|}\rangle\geq
\cos\frac{\pi}{k} \Big\},i=1,2,\cdots,k,
$$

$$
U_{r,\va}(x)=C_{N,s}\sik\frac{\va^{\frac{N-2s}{2}}}{(1+\va^2|x-x^i|)^\frac{N-2s}{2}},
$$
and

$$
I(u)=\frac{1}{2}\int_{\r^{2N}}\frac{|u(x)-u(y)|^2}{|x-y|^{N+2s}}-\frac{1}{2^*_s}\int_{\r^N}K\Big(\frac{|x|}{\nu}\Big)|u|^{2^*_s}
=\frac{1}{2}\int_{\r^{N}}|(-\Delta)^{\frac s2}u|^2-\frac{1}{2^*_s}\int_{\r^N}K\Big(\frac{|x|}{\nu}\Big)|u|^{2^*_s}.
$$

Similar Lemma B.1 and B.2 in \cite{wy1}, we have,

\begin{lemma}\label{lmA.1} For any constant $0<\sigma\leq
\min\{\al,\be\}$, there is a constant $C>0$, such that
$$
\frac{1}{(1+|y-x^i|)^{\al}}\frac{1}{(1+|y-x^j|)^{\be}}\leq
\frac{C}{|x^i-x^j|^{\sigma}}\Big(\frac{1}{(1+|y-x^i|)^{\al+\be-\sigma}}+\frac{1}{(1+|y-x^j|)^{\al+\be-\sigma}}\Big).
$$
\end{lemma}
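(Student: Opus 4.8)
The plan is to prove this pointwise inequality by a direct case analysis, splitting $\r^N$ according to which of the two centers $x^i$, $x^j$ is closer to the running point $y$; by the symmetry of the statement in $i$ and $j$ it suffices to treat one of the two cases carefully.

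First I would dispose of the region $\{y:|y-x^i|\le|y-x^j|\}$. On this set the triangle inequality gives $|x^i-x^j|\le|y-x^i|+|y-x^j|\le 2|y-x^j|$, so that $1+|y-x^j|\ge|y-x^j|\ge\tfrac12|x^i-x^j|$, hence $(1+|y-x^j|)^{-\sigma}\le 2^{\sigma}|x^i-x^j|^{-\sigma}$ since $\sigma>0$. Writing
$$
\frac{1}{(1+|y-x^i|)^{\al}(1+|y-x^j|)^{\be}}=\frac{1}{(1+|y-x^j|)^{\sigma}}\cdot\frac{1}{(1+|y-x^i|)^{\al}(1+|y-x^j|)^{\be-\sigma}},
$$
I would then use $\be-\sigma\ge0$ (this is precisely where the hypothesis $\sigma\le\be$ is used) together with $1+|y-x^i|\le 1+|y-x^j|$ on this region to estimate $(1+|y-x^j|)^{-(\be-\sigma)}\le(1+|y-x^i|)^{-(\be-\sigma)}$, so the second factor is at most $(1+|y-x^i|)^{-(\al+\be-\sigma)}$. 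Combining the two displays gives
$$
\frac{1}{(1+|y-x^i|)^{\al}(1+|y-x^j|)^{\be}}\le\frac{2^{\sigma}}{|x^i-x^j|^{\sigma}}\cdot\frac{1}{(1+|y-x^i|)^{\al+\be-\sigma}}
$$
on $\{|y-x^i|\le|y-x^j|\}$.

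On the complementary region $\{|y-x^i|\ge|y-x^j|\}$ I would run the identical argument with the roles of $i$ and $j$ interchanged — now invoking $\sigma\le\al$ — to obtain the analogous bound with $(1+|y-x^j|)^{-(\al+\be-\sigma)}$ on the right-hand side. Since both terms appearing on the right of the asserted inequality are nonnegative, adding the two one-sided estimates (each valid on its own half-space) yields the claim on all of $\r^N$ with $C=2^{\sigma}$.

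I do not expect any genuine obstacle here: this is a routine two-bubble interaction estimate of the kind used repeatedly in gluing arguments, and the only point requiring attention is the exponent bookkeeping — keeping $\al-\sigma\ge0$ and $\be-\sigma\ge0$ — which is exactly what the hypothesis $0<\sigma\le\min\{\al,\be\}$ provides. One could alternatively avoid the explicit case split by applying the elementary inequality $a^{\sigma}+b^{\sigma}\ge(a+b)^{\sigma}$ to $a=1+|y-x^i|$, $b=1+|y-x^j|$ and then using $|x^i-x^j|\le(1+|y-x^i|)+(1+|y-x^j|)$, but the case analysis above is the most transparent route.
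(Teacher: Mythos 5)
Your proof is correct: the half-space case split $\{|y-x^i|\le|y-x^j|\}$ versus its complement, combined with the triangle inequality and the hypotheses $\sigma\le\be$, $\sigma\le\al$ respectively, gives the stated bound with $C=2^{\sigma}$, and this is essentially the same argument as in Lemma B.1 of \cite{wy1}, which the paper cites in place of giving its own proof of Lemma \ref{lmA.1}. The only caveat is your closing side remark: $a^{\sigma}+b^{\sigma}\ge(a+b)^{\sigma}$ holds only for $\sigma\le 1$, although for $\sigma>1$ the convexity bound $(a+b)^{\sigma}\le 2^{\sigma-1}\bigl(a^{\sigma}+b^{\sigma}\bigr)$ repairs that alternative route, so nothing essential is affected.
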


\begin{lemma}\label{lmA.2} For any constant $0<\kappa<
N-2s$, there is a constant $C>0$, such that
$$
\int_{\r^N}\frac{1}{|x|^{N-2s}}\frac{1}{(1+|y-x|)^{2s+\kappa}}dx \leq \frac{C}{(1+|y|)^\kappa}.
$$
\end{lemma}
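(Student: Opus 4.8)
The plan is to prove this inequality by a standard region decomposition of the integral, exploiting that the two factors $\frac{1}{|x|^{N-2s}}$ and $\frac{1}{(1+|y-x|)^{2s+\kappa}}$ are each well behaved away from their respective concentration points $x=0$ and $x=y$. First I would dispose of the case of bounded $|y|$: since $N-2s<N$ the weight $|x|^{-(N-2s)}$ is locally integrable near the origin, while at infinity the integrand behaves like $|x|^{-(N+\kappa)}$ with $N+\kappa>N$, so the integral is finite and bounded by an absolute constant; as $(1+|y|)^{-\kappa}$ is bounded below on $\{|y|\le 1\}$, the claimed inequality holds there. Hence from now on I may assume $|y|\ge 1$, so that $(1+|y|)^{-\kappa}$ and $|y|^{-\kappa}$ are comparable, and it suffices to bound the integral by $C|y|^{-\kappa}$.

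Then I would split $\r^N$ into four pieces: (a) $\{|x|\le |y|/2\}$; (b) $\{|x-y|\le |y|/2\}$; (c) $\{|x|\ge |y|/2,\ |x-y|\ge |y|/2,\ |x|\le 2|y|\}$; (d) $\{|x|\ge 2|y|\}$. On (a) one has $|y-x|\ge |y|/2$, so the second factor is $\le C|y|^{-(2s+\kappa)}$, and $\int_{|x|\le |y|/2}|x|^{-(N-2s)}\,dx = C|y|^{2s}$; this contributes $C|y|^{-\kappa}$. On (b) one has $|x|\ge |y|/2$, so the first factor is $\le C|y|^{-(N-2s)}$, and $\int_{|z|\le |y|/2}(1+|z|)^{-(2s+\kappa)}\,dz \le C|y|^{N-2s-\kappa}$ --- here is precisely where the hypothesis $\kappa<N-2s$ (equivalently $2s+\kappa<N$) enters --- so this piece is again $\le C|y|^{-\kappa}$. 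On (c) both factors are controlled at scale $|y|$: $|x|^{-(N-2s)}\le C|y|^{-(N-2s)}$ and, since $|x-y|\ge |y|/2$, $(1+|y-x|)^{-(2s+\kappa)}\le C|y|^{-(2s+\kappa)}$, while the region has measure $\le C|y|^N$, giving $C|y|^{N-(N-2s)-(2s+\kappa)}=C|y|^{-\kappa}$. On (d), $|x-y|\ge |x|-|y|\ge |x|/2$, so the integrand is $\le C|x|^{-(N+\kappa)}$, and $\int_{|x|\ge 2|y|}|x|^{-(N+\kappa)}\,dx = C|y|^{-\kappa}$. Summing the four contributions yields the lemma.

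I do not expect a genuine obstacle here; the only point requiring care is the bookkeeping of which hypothesis is used where --- in particular that $\kappa<N-2s$ is exactly what makes the region near $x=y$ (piece (b)) integrable at the correct rate, while $N-2s<N$ and $\kappa>0$ handle integrability at the origin and at infinity respectively. One could alternatively quote this directly as a special case of the convolution-type estimates in \cite{wy1}, but the decomposition above is short enough to include in full.
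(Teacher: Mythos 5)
Your proof is correct: the four-region decomposition covers $\r^N$, each piece is estimated with the right exponent, and you correctly identify that $\kappa<N-2s$ (i.e. $2s+\kappa<N$) is exactly what makes the region $|x-y|\le|y|/2$ contribute at the rate $|y|^{N-2s-\kappa}\cdot|y|^{-(N-2s)}=|y|^{-\kappa}$, while local integrability of $|x|^{-(N-2s)}$ and decay $|x|^{-(N+\kappa)}$ at infinity handle the remaining regions and the bounded-$|y|$ case. For comparison: the paper does not prove this lemma at all --- it is quoted directly from Lemma B.2 of \cite{wy1} --- and the proof given there is the same standard scale-$|y|$ decomposition you wrote out, so your argument simply makes the cited estimate self-contained rather than taking a genuinely different route.
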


\begin{lemma}\label{lmA.3}There is a small $\alpha>0$, such that
$$
\int_{\r^N}\frac{1}{|x-y|^{N-2s}}U_{r,\va}^{\frac{4s}{N-2s}}(y)\sik \frac{1}{(1+|y-x^i|)^{\frac{N-2s}{2}+\sigma}}dy\leq C\sik\frac{1}{(1+|x-x^i|)^{\frac{N-2s}{2}+\sigma+\alpha}}.
$$
\end{lemma}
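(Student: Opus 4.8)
The plan is to read the left–hand side as an iterated convolution, reduce it to single–bubble pieces, and then invoke Lemmas~\ref{lmA.1} and~\ref{lmA.2}.

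\textbf{Step 1 (reduction to one bubble).} Since $U_{x^l,\va}(y)\le C(1+|y-x^l|)^{-(N-2s)}$ uniformly for $\va\in[\va_0,\va_1]$, one first bounds $U_{r,\va}^{2^*_s-2}$ by single–bubble powers: when $2^*_s\le 3$ subadditivity gives $U_{r,\va}^{2^*_s-2}(y)\le C\sum_{l=1}^{k}(1+|y-x^l|)^{-4s}$, while for $2^*_s>3$ one keeps in addition the cross terms exactly as in the proof of Lemma~\ref{lm3.3}, and a single pass through Lemma~\ref{lmA.1} again collapses them to single–bubble factors. It therefore suffices to estimate, for each ordered pair $(l,i)$,
\[
\mathcal I_{l,i}(x):=\int_{\r^N}\frac{1}{|x-y|^{N-2s}}\,\frac{1}{(1+|y-x^l|)^{4s}}\,\frac{1}{(1+|y-x^i|)^{\frac{N-2s}{2}+\sigma}}\,dy .
\]

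\textbf{Step 2 (diagonal terms $l=i$).} Here the polynomial factor is $(1+|y-x^i|)^{-(4s+\frac{N-2s}{2}+\sigma)}$. Picking $\alpha>0$ small enough that $2s+\tfrac{N-2s}{2}+\sigma+\alpha<N-2s$ (possible since $N-2s>2$ and $\sigma$ is small) and throwing away the surplus decay, I write the exponent as $2s+\kappa$ with $\kappa=\tfrac{N-2s}{2}+\sigma+\alpha\in(0,N-2s)$; then Lemma~\ref{lmA.2} yields $\mathcal I_{i,i}(x)\le C(1+|x-x^i|)^{-(\frac{N-2s}{2}+\sigma+\alpha)}$, and summing over $i$ gives the claimed bound from the diagonal part.

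\textbf{Step 3 (off–diagonal terms $l\neq i$).} Apply Lemma~\ref{lmA.1} with an exponent $\beta\in\big(0,\min\{4s,\tfrac{N-2s}{2}+\sigma\}\big]$ to decouple
\[
\frac{1}{(1+|y-x^l|)^{4s}(1+|y-x^i|)^{\frac{N-2s}{2}+\sigma}}\le \frac{C}{|x^l-x^i|^{\beta}}\Big(\frac{1}{(1+|y-x^l|)^{4s+\frac{N-2s}{2}+\sigma-\beta}}+\frac{1}{(1+|y-x^i|)^{4s+\frac{N-2s}{2}+\sigma-\beta}}\Big),
\]
and then apply Lemma~\ref{lmA.2} to each resulting single–bubble convolution exactly as in Step~2. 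This produces
\[
\mathcal I_{l,i}(x)\le \frac{C}{|x^l-x^i|^{\beta}}\Big(\frac{1}{(1+|x-x^l|)^{\frac{N-2s}{2}+\sigma+\alpha}}+\frac{1}{(1+|x-x^i|)^{\frac{N-2s}{2}+\sigma+\alpha}}\Big)
\]
for a small $\alpha>0$, provided $\beta$ is chosen small. Summing over $l\neq i$ and over $i$, the lemma follows once $\sum_{l\neq i}|x^l-x^i|^{-\beta}\le C$. Since $|x^l-x^i|=2r\,\big|\sin\tfrac{(l-i)\pi}{k}\big|\ge c\,\tfrac{r}{k}\,\mathrm{dist}(l-i,k\mathbb Z)$ and $r\sim\nu r_0$ with $\nu=k^{\frac{N-2s}{N-2s-m}}$, one gets $\sum_{l\neq i}|x^l-x^i|^{-\beta}\le C\big(\tfrac{k}{\nu}\big)^{\beta}\sum_{1\le q\le k/2}q^{-\beta}$, which is bounded (in fact $o(1)$) as soon as $\beta\ge\tau=\tfrac{N-2s-m}{N-2s}$.

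\textbf{Main obstacle.} Everything hinges on fitting $\beta$ into a \emph{nonempty} window: $\beta$ must be at least $\tau$ for the lattice sum to converge, small enough (together with $\sigma$) that the residual decay exponent $2s+\tfrac{N-2s}{2}+\sigma-\beta$ still lies in the range required by Lemma~\ref{lmA.2} while leaving a genuine gain $\alpha>0$ over $\tfrac{N-2s}{2}+\sigma$, and also $\beta\le\min\{4s,\tfrac{N-2s}{2}+\sigma\}$ for Lemma~\ref{lmA.1}. Checking that \eqref{m} — in particular $m>2$ (which forces $N-2s>2$ and hence $\tau\le\tfrac{N-2s}{2}$) together with the lower bound $m>N-2s-2\tfrac{(N-2s)^2}{N+2s}$ — makes this window nonempty is the technical heart of the argument; it is the very bookkeeping that pins down the lower restriction on $m$ and that already appears in the choice of exponents around \eqref{3.21}. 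The case $2^*_s>3$ in Step~1 also requires carrying the extra lower–order terms exactly as in the second estimate of the proof of Lemma~\ref{lm3.3}.
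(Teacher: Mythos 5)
Your proposal is sound and rests on the same two ingredients as the paper's proof (Lemma~\ref{lmA.1} to decouple two bubbles at the price of a factor $|x^i-x^j|^{-\beta}$, Lemma~\ref{lmA.2} for the Riesz potential of a single bubble, and the lattice bound $\sum_{i\neq 1}|x^i-x^1|^{-\beta}\le C$ for $\beta\ge\tau$), but it organizes them differently: you expand globally into ordered pairs $(l,i)$, while the paper works in the sector $\Omega_1$, where $|x-x^i|\ge |x-x^1|$, first collapses the whole cluster onto the bubble at $x^1$ with a small loss $\eta\ge\tau$ in the exponent, so that $U_{r,\va}^{\frac{4s}{N-2s}}\le C(1+|x-x^1|)^{-4s+\frac{4s\eta}{N-2s}}$ on $\Omega_1$, then decouples once more against $\sum_i(1+|x-x^i|)^{-\frac{N-2s}{2}-\sigma}$, applies Lemma~\ref{lmA.2} to the resulting single bubble, and finally sums over the sectors $\Omega_j$ by symmetry. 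Two remarks. First, the ``window'' you leave open does close, and in fact more comfortably than in the paper: your constraints are $\tau\le\beta\le\min\{2s-\alpha,\,4s,\,\frac{N-2s}{2}+\sigma\}$, i.e.\ essentially $\tau<2s$, whereas the paper's two successive losses force $2s-\frac{(N+2s)\eta}{N-2s}>0$ with $\eta\ge\tau$, i.e.\ $\tau<\frac{2s(N-2s)}{N+2s}$, which is strictly stronger; so under the inequality on $m$ that the paper itself invokes in the last line of its proof your $\beta$ exists, and deferring this verification is the only real gap in your write-up, a routine one. (Incidentally, the inequality you display in Step~2, $2s+\frac{N-2s}{2}+\sigma+\alpha<N-2s$, is neither needed nor true for all admissible $(N,s)$; what Lemma~\ref{lmA.2} requires is exactly the condition you state immediately afterwards, $\kappa=\frac{N-2s}{2}+\sigma+\alpha<N-2s$, which does follow from $N-2s>2$.) Second, the paper's sector localization has a concrete payoff that your Step~1 must compensate for: since the cluster is reduced to a single bubble inside $\Omega_1$ before the power $\frac{4s}{N-2s}$ is taken, no cross terms ever appear, whereas in your global pairwise expansion the case $2^*_s>3$ produces triple products requiring an extra pass of Lemma~\ref{lmA.1}; your appeal to the scheme of Lemma~\ref{lm3.3} is plausible, but that bookkeeping should be written out to make Step~1 complete.
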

\begin{proof} Since
$$
|x^i-x^1|=2|x^1|\sin\frac{(i-1)\pi}{k},\ \ i=2,\cdots,k,
$$
we have
\begin{eqnarray*}
\sum_{i=2}^k\frac{1}{|x^i-x^1|^\eta}&=&\frac{1}{(2|x^1|)^\eta}\sum_{i=2}^k\frac{1}{(\sin\frac{(i-1)\pi}{k})^\eta}\\
& =& \left\{\begin{array}{ll}
        \frac{2}{(2|x^1|)^\eta}\sum_{i=2}^{\frac{k}{2}}\frac{1}{(\sin\frac{(i-1)\pi}{k})^\eta}+\frac{1}{(2|x^1|)^\eta}, & \text{if}\  k\ \text{is \ even},\vspace{2mm}\\
       \frac{2}{(2|x^1|)^\eta}\sum_{i=2}^{[\frac{k}{2}]}\frac{1}{(\sin\frac{(i-1)\pi}{k})^\eta}, & \text{if}\ k\ \text{is \ odd}.
    \end{array}\right.
\end{eqnarray*}
But
$$
0<c'\leq\frac{\sin\frac{(i-1)\pi}{k}}{\frac{(i-1)\pi}{k}}\leq C'',\
\ i=2,\cdots,[\frac{k}{2}].
$$
So, for $\nu=k^{\frac{N-2s}{N-2s-m}}$ and any $\eta\geq \frac{N-2s-m}{N-2s}$ ,
\begin{eqnarray*}
\sum_{i=2}^k\frac{1}{|x^i-x^1|^{\eta}}\leq \frac{C k^\eta}{\nu^\eta}\sum_{i=2}^k\frac{1}{i^\eta}
=\left\{\begin{array}{ll}
       \frac{C k^\eta\ln k}{\nu^\eta}\leq C, & \eta\geq 1,\vspace{2mm}\\
      \frac{C k}{\nu^\eta}, & \eta<1.
    \end{array}\right.
\end{eqnarray*}
For any $x\in \Omega_1$, we have for $i\neq 1$,
$
 |x-x^i|\geq|x-x^1|.
$
By using Lemma \ref{lmA.1}, we obtain
\begin{equation*} \begin{split}
\sum_{i=2}^k\frac{1}{(1+|x-x^i|)^{N-2s}}&\leq \sum_{i=2}^k C\frac{1}{(1+|x-x^1|)^{N-2s-\eta}}\frac{1}{(1+|x-x^i|)^{\eta}}\\
&\leq
\frac{1}{(1+|x-x^1|)^{N-2s-\eta}}\sum_{i=2}^k\frac{1}{|x^1-x^i|^\eta}\\
&\leq \frac{C}{(1+|x-x^1|)^{N-2s-\eta}}.
\end{split}
\end{equation*}
Thus,
$$
U_{r,\va}^{\frac{4s}{N-2s}}\leq \frac{C}{(1+|x-x^1|)^{4s-\frac{4s\eta}{N-2s}}}.
$$
It follows from Lemmas \ref{lmA.1} and \ref{lmA.2} that
\begin{eqnarray*}
&&\int_{\Omega_1}\frac{1}{|y-x|^{N-2s}}U_{r,\va}^{\frac{4s}{N-2s}}\sik\frac{1}{(1+|x-x^i|)^{\frac{N-2s}{2}+\sigma}}\\
&\leq&C \int_{\Omega_1}\frac{1}{|y-x|^{N-2s}}\frac{1}{(1+|x-x^1|)^{4s-\frac{4s\eta}{N-2s}}}\sik\frac{1}{(1+|x-x^i|)^{\frac{N-2s}{2}+\sigma}}\\
&\leq&C \int_{\Omega_1}\frac{1}{|y-x|^{N-2s}}\Big(\frac{1}{(1+|x-x^1|)^{\frac{N+6s}{2}+\sigma-\frac{4s\eta}{N-2s}}}+\frac{1}{(1+|x-x^1|)^{\frac{N+6s}{2}
+\sigma-\frac{(N+2s)\eta}{N-2s}}}\ds\sum_{i=2}^{k}\frac{1}{|x^i-x^1|^\eta}\Big)\\
&\leq&C \int_{\Omega_1}\frac{1}{|y-x|^{N-2s}}\frac{1}{(1+|x-x^1|)^{\frac{N+6s}{2}+\sigma-\frac{(N+2s)\eta}{N-2s}}}\\
&\leq&C \frac{1}{(1+|y-x^1|)^{\frac{N+2s}{2}+\sigma-\frac{(N+2s)\eta}{N-2s}}}=C \frac{1}{(1+|y-x^1|)^{\frac{N-2s}{2}+\sigma+2s-\frac{(N+2s)\eta}{N-2s}}}\\
&\leq&C \frac{1}{(1+|y-x^1|)^{\frac{N-2s}{2}+\sigma+\alpha}},
\end{eqnarray*}
since $2s-\frac{(N+2s)\eta}{N-2s}>0$ when $m>N-2s(1+\frac{(N-2s)^2}{N+2s})$.

Thus, we obtain
\begin{eqnarray*}
&&\int_{\r^N}\frac{1}{|y-x|^{N-2s}}U_{r,\va}^{\frac{4s}{N-2s}}\sik\frac{1}{(1+|x-x^i|)^{\frac{N-2s}{2}+\sigma}}\\
&=&\sum_{j=1}^k \int_{\Omega_j}\frac{1}{|y-x|^{N-2s}}U_{r,\va}^{\frac{4s}{N-2s}}\sik\frac{1}{(1+|x-x^i|)^{\frac{N-2s}{2}+\sigma}}\\
&\leq&C \sik\frac{1}{(1+|y-x^i|)^{\frac{N-2s}{2}+\sigma+\alpha}}.
\end{eqnarray*}
\end{proof}

\begin{proposition}\label{propA.1} We have
\begin{equation*}\begin{split}
I(U_{r,\va})=&k\Big(A+\frac{B_0}{\va^{m}\nu^m}+\frac{B_1}{\va^{m-2}\nu^m}(\nu r_0-r)^2-\sum_{i=2}^k\frac{B_2}{\va^{N-2s}|x^1-x^i|^{N-2s}}\\
&+O\big(\frac{1}{\nu^{m+\sigma}}+\frac{1}{\nu^{m}}\big|\nu r_0-|x^1|\big|^{2+\sigma}\big)\Big)
\end{split}
\end{equation*}
 where
$B_0\in [C_1,C_2],\ 0<C_1<C_2,$ $A=\frac{s}{N}\int_{\r^N}U_{0,1}^{2^*_s},\  B_i,\ C_i,\ i=1,2,$ are some constants, and $r=|x^1|$.
\end{proposition}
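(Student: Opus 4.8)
\textbf{Proof proposal for Proposition \ref{propA.1}.}

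The plan is to compute the two pieces of $I(U_{r,\va})$ — the quadratic (Dirichlet) term and the critical nonlinearity term — separately, and in each case exploit the fact that $U_{r,\va}=\sik U_{x^i,\va}$ is a sum of $k$ well-separated ``bubbles'' that are cyclically symmetric, so that the full expansion is $k$ times the contribution coming from a single bubble plus interaction terms. First I would use that each $U_{x^i,\va}$ solves $(-\Delta)^s U_{x^i,\va}=U_{x^i,\va}^{2^*_s-1}$, so integration by parts gives
\begin{equation*}
\frac12\int_{\r^N}|(-\Delta)^{\frac s2}U_{r,\va}|^2=\frac12\sum_{i,j}\int_{\r^N}U_{x^i,\va}^{2^*_s-1}U_{x^j,\va}.
\end{equation*}
Splitting into the diagonal part $i=j$, which contributes $\frac k2\int_{\r^N}U_{0,1}^{2^*_s}$, and the off-diagonal part, I would estimate $\int_{\r^N}U_{x^i,\va}^{2^*_s-1}U_{x^j,\va}$ for $i\neq j$: since $U_{x^i,\va}$ decays like $|x-x^i|^{-(N-2s)}$, a standard bubble-interaction estimate (of the type proven in Lemma \ref{lmA.1} together with the decay of $U$) gives that this integral equals $\dfrac{\bar B}{\va^{N-2s}|x^i-x^j|^{N-2s}}+O\!\big(|x^i-x^j|^{-(N-2s)-\sigma}\big)$ for an explicit constant $\bar B>0$, uniformly for $\va\in[\va_0,\va_1]$. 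Summing over $j\neq i$ and using cyclic symmetry produces the term $-\sum_{i=2}^k\dfrac{B_2}{\va^{N-2s}|x^1-x^i|^{N-2s}}$ after it is combined with the corresponding term from the nonlinearity; by Lemma \ref{lmA.3}-type bounds the sum of the error terms over $i$ is absorbed into $O(\nu^{-m-\sigma})$ once one recalls $\nu=k^{\frac{N-2s}{N-2s-m}}$ and $|x^1-x^i|\sim \nu/i$.

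Next I would treat $\dfrac1{2^*_s}\int_{\r^N}K(\tfrac{|x|}{\nu})U_{r,\va}^{2^*_s}$. By the $\mathbf Z_k$-symmetry it suffices to integrate over $\Omega_1$ and multiply by $k$. On $\Omega_1$ one writes $U_{r,\va}^{2^*_s}=(U_{x^1,\va}+\sum_{i\neq1}U_{x^i,\va})^{2^*_s}$ and expands: the leading term $U_{x^1,\va}^{2^*_s}$ integrates (essentially over all of $\r^N$, the tail being a negligible error) to $\int_{\r^N}U_{0,1}^{2^*_s}$, and the cross terms of the form $2^*_s\,U_{x^1,\va}^{2^*_s-1}U_{x^i,\va}$ reproduce exactly the interaction sum $-\sum_{i\geq2}B_2/(\va^{N-2s}|x^1-x^i|^{N-2s})$ above (this is where the two halves of $I$ combine, with a net positive coefficient after the $\frac12$ versus $\frac1{2^*_s}$ bookkeeping). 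The genuinely new contribution comes from the factor $K(\tfrac{|x|}{\nu})-1$: on $\Omega_1$, in the region $||x|-\nu r_0|\le\delta\nu$ one uses hypothesis $(K)$, namely $K(\tfrac{|x|}{\nu})-1=-c_0|\tfrac{|x|}{\nu}-r_0|^m+O(|\tfrac{|x|}{\nu}-r_0|^{m+\theta})$, and Taylor-expands $|x|$ around $|x^1|$ and $|x^1|$ around $\nu r_0$ so that $\big|\tfrac{|x|}\nu-r_0\big|^m$ becomes $\nu^{-m}\big(\big||x|-|x^1|\big|+(|x^1|-\nu r_0)\big)^m$; integrating $\nu^{-m}\big||x|-|x^1|\big|^m\,U_{x^1,\va}^{2^*_s}$ against the bubble gives the $\nu^{-m}\va^{-m}B_0$ term, and the leading correction from $|x^1|-\nu r_0\neq0$ gives the quadratic term $\nu^{-m}\va^{-(m-2)}B_1(\nu r_0-r)^2$ (odd powers vanish by the evenness built into the exponent $m$ and the symmetry of $U_{0,1}$); everything of order $\big|\nu r_0-|x^1|\big|^{2+\sigma}$ or higher goes into the error. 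The region $||x|-\nu r_0|\ge\delta\nu$ contributes only $O(\nu^{-m-\sigma})$ because there $U_{x^1,\va}$ is exponentially (polynomially, of high order) small, exactly as in the estimate of $J_2$ in Lemma \ref{lm3.4}.

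Finally I would collect terms, setting $A=\frac sN\int_{\r^N}U_{0,1}^{2^*_s}$ (the coefficient of $k$ coming from the diagonal parts after the $\frac12-\frac1{2^*_s}$ combination, using $\frac12-\frac1{2^*_s}=\frac sN$), identifying $B_0,B_1,B_2$ as the explicit positive integrals produced above, and checking $B_0\in[C_1,C_2]$ since it is, up to bounded factors, $c_0\int_{\r^N}|z|^m U_{0,1}^{2^*_s}(z)$ evaluated with $\va$ ranging over the compact set $[\va_0,\va_1]$ (here the constraint $m<N-2s$ guarantees convergence of this moment integral, and $m>2$ is what makes the quadratic term the genuine leading correction in $r$). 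The main obstacle, and the step requiring the most care, is the bookkeeping of the error terms: one must verify that after summing the off-diagonal bubble interactions over $i=2,\dots,k$ and integrating the $O(|\tfrac{|x|}\nu-r_0|^{m+\theta})$ remainder from $(K)$, the total error is genuinely $o$ of the main terms, i.e. $O(k\,\nu^{-m-\epsilon})$ for some $\epsilon>0$; this is precisely where the lower bound $m>\max\{2,\,N-2s-2(N-2s)^2/(N+2s)\}$ in \eqref{m} is used, through the same $\alpha$-gain mechanism as in Lemma \ref{lmA.3}, to ensure the interaction sums converge and the tail integrals decay fast enough.
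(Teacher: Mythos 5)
Your proposal is correct and follows essentially the same route as the paper: split $I(U_{r,\va})$ into the quadratic part (expanded via $(-\Delta)^sU_{x^i,\va}=U_{x^i,\va}^{2^*_s-1}$ into diagonal plus bubble-interaction terms estimated through Lemma \ref{lmA.1}) and the nonlinear part (reduced by symmetry to $\Omega_1$, expanded with cross terms, and with the $K(\tfrac{|x|}{\nu})-1$ contribution handled by hypothesis $(K)$, the Taylor expansion of $\big||x-x^1|-\nu r_0\big|^m$, and the vanishing of the odd moment $\int |x_1|^{m-2}x_1U_{0,\va}^{2^*_s}=0$), then combined with the $\tfrac12-\tfrac1{2^*_s}=\tfrac sN$ bookkeeping. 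No genuine gaps beyond what the paper itself leaves to the cited lemmas.
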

\begin{proof}
 Using the symmetry, we have

\begin{equation}\label{A.1} \begin{split}
\langle U_r,U_r\rangle_s&=\sik\sum_{j=1}^{k}\int_{\r^N}U_{x^j,\va}^{2^*_s-1}U_{x^i,\va}\\
&=k\Bigl(\int_{\r^N}U_{0,1}^{2^*_s}+\sum_{i=2}^{k}\int_{\r^N}U_{x^1,\va}^{2^*_s-1}U_{x^i,\va}\Bigr)\\
 &=k \int_{\r^N}U_{0,1}^{2^*_s}+k \sum_{i=2}^{k}\int_{\r^N}U_{x^1,\va}^{2^*_s-1}U_{x^i,\va}.\\
\end{split}
\end{equation}
It follows from Lemma \ref{lmA.1} that
\begin{equation*}\label{A.2} \begin{split}
&\sum_{i=2}^k\int_{\r^N}U_{x^1,\va}^{2^*_s-1}U_{x^i,\va}=C\int_{\r^N}\Big(\frac{1}{(1+\va|x-x^1|)^{N-2s}}\Big)^{2^*_s-1}\sum_{i=2}^k\frac{1}{(1+\va|x-x^i|)^{N-2s}}\\
&\le C
\sum_{i=2}^k\frac{1}{\va^{N-2s}|x^1-x^i|^{N-2s}}\int_{\r^N}\frac{1}{(1+\va|x-x^1|)^{(N-2s)(2^*_s-1)}}+O\Big(\sum_{i=2}^k\frac{1}{\va^{N-2s+\sigma}|x^1-x^i|^{N-2s+\sigma}}\Big)\\
&=\sum_{i=2}^k\frac{C_1}{\va^{N-2s}|x^1-x^i|^{N-2s}}+O\Big(\sum_{i=2}^k\frac{1}{|x^1-x^i|^{N-2s+\sigma}}\Big).
\end{split}
\end{equation*}
However,
\begin{eqnarray*}
\sum_{i=2}^k\int_{\r^N}U_{x^1,\va}^{2^*_s-1}U_{x^i,\va}&=&\int_{\r^N}\Big(\frac{1}{(1+\va|x-x^1|)^{N-2s}}\Big)^{2^*_s-1}\sum_{i=2}^k\frac{1}{(1+\va|x-x^i|)^{N-2s}}\\
&\ge&
\sum_{i=2}^k\int_{B_{\frac{|x^1-x^i|}{2}}(x^1)}\Big(\frac{1}{(1+\va|x-x^1|)^{N-2s}}\Big)^{2^*_s-1}\frac{1}{(1+\va|x-x^i|)^{N-2s}}\\
&&+\sum_{i=2}^k
\int_{B_{\frac{|x^1-x^i|}{2}}(x^i)}\Big(\frac{1}{(1+\va|x-x^1|)^{N-2s}}\Big)^{2^*_s-1}\frac{1}{(1+\va|x-x^i|)^{N-2s}}\\
&\ge&\sum_{i=2}^k\frac{C_2}{\va^{N-2s}|x^1-x^i|^{N-2s}}+O\Big(\sum_{i=2}^k\frac{1}{|x^1-x^i|^{N-2s+\sigma}}\Big).
\end{eqnarray*}

Hence, there exists $B_0$ in $[C_2,\,\,C_1]$, where $C_1$ and $C_2$ are independent of $k$, such that
\begin{equation}\label{A.2} \begin{split}
&\sum_{i=2}^k\int_{\r^N}U_{x^1,\va}^{2^*_s-1}U_{x^i,\va}=\sum_{i=2}^k\frac{B_0}{\va^{N-2s}|x^1-x^i|^{N-2s}}+O\Big(\sum_{i=2}^k\frac{1}{|x^1-x^i|^{N-2s+\sigma}}\Big).
\end{split}
\end{equation}
Now, by symmetry, we see
\begin{eqnarray*}
\int_{\r^N}K(\frac{|x|}{\nu})U_r^{2^*_s}&=&k\int_{\Omega_1}K(\frac{|x|}{\nu})U_{x^1,\va}^{2^*_s}+k2^*_s\int_{\Omega_1}K(\frac{|x|}{\nu})\sum_{i=2}^kU_{x^1,\va}^{2^*_s-1}U_{x^i,\va}\\
&& +k\left\{\begin{array}{ll}
         O\Big(\ds\int_{\Omega_1}U_{x^1,\va}^{\frac{2^*_s}{2}}(\sum_{i=2}^kU_{x^i,\va})^{\frac{2^*_s}{2}}\Big), & \text{if}\  2<2^*_s<3,\vspace{3mm}\\
       O\Big(\ds\int_{\Omega_1}U_{x^1,\va}^{2^*_s-2}(\sum_{i=2}^kU_{x^i,\va})^2\Big), & \text{if}\ 2^*_s\geq3.
    \end{array}\right.
\end{eqnarray*}
For $x\in \Omega_1$, $|x-x^i|\geq \frac12|x^i-x^1|$ and $|x-x^i|\geq |x-x^1|,$ we obtain
\begin{equation*}\begin{split}
\sum_{i=2}^kU_{x^i,\va}&\leq
C\sum_{i=2}^k\frac{1}{(1+|x-x^i|)^{N-2s-\kappa}}\frac{1}{(1+|x-x^i|)^{\kappa}}\\
 &\leq
C\sum_{i=2}^k
\frac{1}{|x^1-x^i|^{N-2s-\kappa}}\frac{1}{(1+|x-x^1|)^{\kappa}},\\
\end{split}
\end{equation*}
where $s<\kappa<\min\{\frac43 s,\ \frac{N-2s}{2}\}$.
Hence, we get
\begin{eqnarray*}
&&\int_{\Omega_1}U_{x^1,\va}^{\frac{2^*_s}{2}}(\sum_{i=2}^kU_{x^i,\va})^{\frac{2^*_s}{2}}\\
&\leq&C\int_{\Omega_1}\frac{1}{(1+|x-x^1|)^{\frac{(N-2s)2^*_s}{2}}}\Big(\sum_{i=2}^k\frac{1}{|x^i-x^1|^{N-2s-\kappa}}\Big)^\frac{2^*_s}{2}
\frac{1}{(1+|x-x^1|)^{\frac{2^*_s}{2}\kappa}}\\
&=&C\Big(\sum_{i=2}^k\frac{1}{|x^i-x^1|^{N-2s-\kappa}}\Big)^\frac{2^*_s}{2}\int_{\Omega_1}\frac{1}{(1+|x-x^1|)^{\frac{(N-2s)2^*_s}{2}+\frac{2^*_s\kappa}{2}}}\\
&\le&C\Big(\sum_{i=2}^k\frac{1}{|x^i-x^1|^{N-2s-\kappa}}\Big)^\frac{2^*_s}{2}\le C\Big(\frac{k}{\nu}\Big)^{N-2s+\sigma}
\end{eqnarray*}
and
\begin{eqnarray*}
&&\int_{\Omega_1}U_{x^1,\va}^{2^*_s-2}(\sum_{i=2}^kU_{x^i,\va})^2\\
&\leq&C\int_{\Omega_1}\frac{1}{(1+|x-x^1|)^{(N-2s)(2^*_s-2)}}\Big(\sum_{i=2}^k\frac{1}{|x^i-x^1|^{N-2s-\kappa}}\Big)^2\frac{1}{(1+|x-x^1|)^{2\kappa}}\\
&=&C\Big(\sum_{i=2}^k\frac{1}{|x^i-x^1|^{N-2s-\kappa}}\Big)^2\int_{\Omega_1}\frac{1}{(1+|x-x^1|)^{(N-2s)(2^*_s-2)+2\kappa}}\\
&=&C\Big(\sum_{i=2}^k\frac{1}{|x^i-x^1|^{N-2s-\kappa}}\Big)^2\le  C\Big(\frac{k}{\nu}\Big)^{N-2s+\sigma}.
\end{eqnarray*}
On the other hand,
\begin{equation*}\begin{split}
\int_{\Omega_1}K(\frac{|x|}{\nu})U_{x^1,\va}^{2^*_s-1}\sum_{i=2}^kU_{x^i,\va}&=\int_{\Omega_1}U_{x^1,\va}^{2^*_s-1}\sum_{i=2}^kU_{x^i,\va}+\int_{\Omega_1}(K(\frac{|x|}{\nu})-1)U_{x^1,\va}^{2^*_s-1}\sum_{i=2}^kU_{x^i,\va}.
\end{split}
\end{equation*}
But, from Lemmas~\ref{lmA.1} and \ref{A.2},
\begin{eqnarray*}
&&\int_{\Omega_1}U_{x^1,\va}^{2^*_s-1}\sum_{i=2}^kU_{x^i,\va}=\int_{\r^N}U_{x^1,\va}^{2^*_s-1}\sum_{i=2}^kU_{x^i,\va}
-\int_{\r^N\setminus\Omega_1}U_{x^1,\va}^{2^*_s-1}\sum_{i=2}^kU_{x^i,\va}\\
&=&\int_{\r^N}U_{x^1,\va}^{2^*_s-1}\sum_{i=2}^kU_{x^i,\va}
+O\Big(\Big(\frac{k}{r}\Big)^\sigma\int_{\r^N\setminus\Omega_1}U_{x^1,\va}^{2^*_s-1-\sigma}\sum_{i=2}^kU_{x^i,\va}\Big)\\
&=&\int_{\r^N}U_{x^1,\va}^{2^*_s-1}\sum_{i=2}^kU_{x^i,\va}+O\Big(\Big(\frac{k}{\nu r_0}\Big)^\sigma\sum_{i=2}^k\frac{1}{|x^i-x^1|^{N-2s}}
\int_{\r^N\setminus\Omega_1}\Big(U_{x^1,\va}^{2^*_s-1-\sigma}+U^{2^*_s-1-\sigma}_{x^i,\va}\Big)\Big)\\
&=&\sum_{i=2}^k\frac{B_0'}{\va^{N-2s}|x^1-x^i|^{N-2s}}+O\Big(\Big(\frac{k}{\nu}\Big)^{N-2s+\sigma}\Big),
\end{eqnarray*}
where $0<\sigma<\frac{2s}{N-2s}$.
Moreover, similarly,
\begin{eqnarray*}
&&\int_{\Omega_1}\Big|K(\frac{|x|}{\nu})-1\Big|U_{x^1,\va}^{2^*_s-1}\sum_{i=2}^kU_{x^i,\va}\\
&=&\int_{\r^N}\Big|K(\frac{|x|}{\nu})-1\Big|U_{x^1,\va}^{2^*_s-1}\sum_{i=2}^kU_{x^i,\va}
-\int_{\r^N\setminus\Omega_1}\Big|K(\frac{|x|}{\nu})-1\Big|U_{x^1,\va}^{2^*_s-1}\sum_{i=2}^kU_{x^i,\va}\\
&\le&\int_{B_{\frac{|x^1|}{2}}(x^1)}\Big|K(\frac{|x|}{\nu})-1\Big|U_{x^1,\va}^{2^*_s-1}\sum_{i=2}^kU_{x^i,\va}+C\int_{\r^N\setminus B_{\frac{|x^1|}{2}}(x^1)}U_{x^1,\va}^{2^*_s-1}\sum_{i=2}^kU_{x^i,\va}
+O\Big(\Big(\frac{k}{r}\Big)^{N-2s+\sigma}\Big)\\
&\le&\frac{C}{\nu^m}\int_{\r^N}U_{x^1,\va}^{2^*_s-1}\sum_{i=2}^kU_{x^i,\va}+O\Big((\frac{k}{\nu})^\sigma\sum_{i=2}^k\frac{1}{|x^i-x^1|^{N-2s}}
\int_{\r^N\setminus B_{\frac{|x^1|}{2}}(x^1)}\Big(U_{x^1,\va}^{2^*_s-1-\sigma}+U^{2^*_s-1-\sigma}_{x^i,\va}\Big)\Big)\\
&&+O\Big(\Big(\frac{k}{\nu}\Big)^{N-2s+\sigma}\Big)\\
&=&O\Big(\Big(\frac{k}{\nu}\Big)^{N-2s+\sigma}+\frac{1}{\nu^{m+\sigma}}\Big).
\end{eqnarray*}
Hence,
\begin{equation*}\begin{split}
\int_{\Omega_1}K(\frac{|x|}{\nu})U_{x^1,\va}^{2^*_s-1}\sum_{i=2}^kU_{x^i,\va}
=\sum_{j=2}^k\frac{B'_0}{\va^{N-2s}|x^1-x^i|^{N-2s}}+O\Big(\Big(\frac{k}{\nu}\Big)^{N-2s+\sigma}+\frac{1}{\nu^{m+\sigma}}\Big).
\end{split}
\end{equation*}
Finally,
\begin{eqnarray*}
&&\int_{\Omega_1}K(\frac{|x|}{\nu})U_{x^1,\va}^{2^*_s}\\
&=&\int_{\Omega_1}U_{x^1,\va}^{2^*_s}-\frac{C}{\nu^m}\int_{\Omega_1}\big||x|-\nu r_0\big|^mU_{x^1,\va}^{2^*_s}+O\Big(\nu^{-m-\theta}\int_{\Omega_1}\big||x|-\nu r_0\big|^{m+\theta}U_{x^1,\va}^{2^*_s}\Big)\\
&=&\int_{\r^N}U_{0,1}^{2^*_s}-\frac{C}{\nu^m}\int_{\r^N}\big||x-x^1|-\nu r_0\big|^mU_{0,\va}^{2^*_s}+O(\frac1{\nu^{m+\theta}}).\\
\end{eqnarray*}
But
$$
\frac{C}{\nu^m}\int_{\r^N\setminus B_{\frac{|x^1|}2}(0)}\big||x-x^1|-\nu r_0\big|^mU_{0,\va}^{2^*_s}
\leq C \int_{\r^N\setminus B_{\frac{|x^1|}2}(0)}\Big(\frac{|x|^m}{\nu^m}+1\Big)U_{0,\va}^{2^*_s}\leq \frac{C}{\nu^N}.
$$
On the other hand, if $x\in B_{\frac{|x^1|}2}(0),$ $x=(x_1,x^*),$  then $|x^1|-x_1\geq \frac{|x^1|}{2}>0.$ We know
$$
|x-x^1|=|x^1|-x_1+O\Big(\frac{|x^*|^2}{|x^1|-x_1}\Big)=|x^1|-x_1+O\Big(\frac{|x^*|^2}{|x^1|}\Big).
$$
So,
\begin{eqnarray*}
\big||x-x^1|-\nu r_0\big|^m&=&\Big||x^1|-x_1+O\Big(\frac{|x^*|^2}{|x^1|}\Big)-\nu r_0\Big|^m\\
&=& |x_1|^m+m|x_1|^{m-2}x_1\Big(\nu r_0-|x^1|+O\Big(\frac{|x|^2}{|x^1|}\Big)\Big)\\
&&+\frac12m(m-1)|x_1|^{m-2}\Big(\nu r_0-|x^1|+O\Big(\frac{|x|^2}{|x^1|}\Big)\Big)^2\\
&&+O\Big(\Big(\nu r_0-|x^1|+O\Big(\frac{|x|^2}{|x^1|}\Big)\Big)^{2+\sigma}\Big),
\end{eqnarray*}
and using
$$
\int_{B_{\frac{|x^1|}2}(0)}|x_1|^{m-2}x_1U_{0,\va}^{2^*_s}=0,
$$
we get
\begin{eqnarray*}
\int_{ B_{\frac{|x^1|}2}(0)}\big||x-x^1|-\nu r_0\big|^mU_{0,\va}^{2^*_s}&=&  \int_{\r^N}|x_1|^mU_{0,\va}^{2^*_s}+\frac12m(m-1)\int_{\r^N}|x_1|^{m-2}U_{0,\va}^{2^*_s}(\nu r_0-|x^1|)^2\\
&&+O\Big(\big|\nu r_0-|x^1|\big|^{2+\sigma}\Big).
\end{eqnarray*}
Therefore,
\begin{equation}\label{A.3}\begin{split}
\int_{\r^N}K(\frac{|x|}{\nu})U_{r,\va}^{2^*_s}=& k\Big(\int_{\r^N}U_{0,1}^{2^*_s}-\frac{c_0}{\va^m \nu^m}\int_{\r^N}|x_1|^mU_{0,1}^{2^*_s}\\
&-\frac{c_0}{\va^{m-2} \nu^m}\frac12m(m-1)\int_{\r^N}|x_1|^{m-2}U_{0,\va}^{2^*_s}(\nu r_0-|x^1|)^2\\
&+2^*_s\sum_{i=2}^k\frac{B_0}{\va^{N-2s}|x^1-x^i|^{N-2s}}+O(\frac{1}{\nu^{m+\sigma}})\Big).
\end{split}
\end{equation}
Now, inserting \eqref{A.1}--\eqref{A.3} into $I(U_{r,\va})$, we complete
the proof.
\end{proof}
Similar to  Proposition \ref{propA.1} and Proposition A.2 in \cite{wy1}, we have
\begin{proposition}\label{propA.2}
\begin{equation*}\begin{split}
\frac{\partial I(U_{r,\va})}{\partial \va}=&k\Big(-\frac{m B_0}{\va^{m+1}\nu^m}+\sum_{i=2}^k\frac{B_2(N-2s)}{\va^{N-2s+1}|x^1-x^i|^{N-2s}}+O\big(\frac{1}{\nu^{m+\sigma}}+\frac{1}{\nu^{m}}\big|\nu r_0-|x^1|\big|^{2}\big)\Big)
\end{split}
\end{equation*}
 where
$B_0\in [C_1,C_2],\ 0<C_1<C_2,$ $A=\frac{s}{N}\int_{\r^N}U_{0,1}^{2^*_s},\  B_i,\ C_i,\ i=1,2,$ are some constants, and $r=|x^1|$.
\end{proposition}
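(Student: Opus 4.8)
The plan is to show that $\partial_\va I(U_{r,\va})$ coincides, up to the asserted error, with the formal $\va$-derivative of the expansion in Proposition~\ref{propA.1}; the point is that the main terms of that expansion depend on $\va$ only through explicit powers, so their derivatives are immediate, and one merely has to re-run the estimates of Proposition~\ref{propA.1} to see that the remainders survive. First I would write, using $(-\Delta)^sU_{x^i,\va}=U_{x^i,\va}^{2^*_s-1}$ to rewrite $(-\Delta)^sU_{r,\va}=\sik U_{x^i,\va}^{2^*_s-1}$,
\[
\frac{\partial I(U_{r,\va})}{\partial\va}=\Big\langle I'(U_{r,\va}),\frac{\partial U_{r,\va}}{\partial\va}\Big\rangle=\int_{\r^N}\Big(\sik U_{x^i,\va}^{2^*_s-1}-K\big(\tfrac{|x|}{\nu}\big)U_{r,\va}^{2^*_s-1}\Big)\frac{\partial U_{r,\va}}{\partial\va}.
\]
Since the centres $\{x^i\}$, the weight $K(|x|/\nu)$ and $U_{r,\va}$ are invariant under rotation by $2\pi/k$, the $k$ integrals obtained by replacing $\partial_\va U_{r,\va}=\sik\partial_\va U_{x^l,\va}$ by a single $\partial_\va U_{x^l,\va}$ all coincide, whence
\[
\frac{\partial I(U_{r,\va})}{\partial\va}=k\int_{\r^N}\Big(\sik U_{x^i,\va}^{2^*_s-1}-K\big(\tfrac{|x|}{\nu}\big)U_{r,\va}^{2^*_s-1}\Big)\frac{\partial U_{x^1,\va}}{\partial\va}.
\]

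I would then follow verbatim the argument proving Proposition~\ref{propA.1}: decompose $\r^N=\bigcup_j\Omega_j$, retain $\Omega_1$ and absorb the rest, and on $\Omega_1$ write $U_{r,\va}=U_{x^1,\va}+w$ with $w=\sum_{i\ge2}U_{x^i,\va}$, expanding $(U_{x^1,\va}+w)^{2^*_s-1}$ to first order in $w$. Paired against $\partial_\va U_{x^1,\va}$ this produces: (i) $\int_{\Omega_1}\big(1-K(\tfrac{|x|}{\nu})\big)U_{x^1,\va}^{2^*_s-1}\,\partial_\va U_{x^1,\va}$; (ii) $-(2^*_s-1)\int_{\Omega_1}K(\tfrac{|x|}{\nu})U_{x^1,\va}^{2^*_s-2}\,w\,\partial_\va U_{x^1,\va}$; and (iii) the higher-order remainders $O\big(U_{x^1,\va}^{2^*_s-2}w^2+w^{2^*_s-1}\big)$ together with the integral over $\r^N\setminus\Omega_1$ and the $\sum_{i\ge2}U_{x^i,\va}^{2^*_s-1}$ contributions. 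The only new input is the elementary bound $|\partial_\va U_{x^i,\va}(x)|\le C(1+|x-x^i|)^{-(N-2s)}$, uniform for $\va\in[\va_0,\va_1]$: $\partial_\va U_{x^i,\va}$ has exactly the same pointwise decay as $U_{x^i,\va}$, so every estimate in Proposition~\ref{propA.1} that bounded a factor $U_{x^i,\va}$ transfers unchanged, Lemmas~\ref{lmA.1}--\ref{lmA.3} apply as they stand, and all of (iii) as well as the $(K-1)$-part of (ii) are $O(\nu^{-m-\sigma})$ for a small $\sigma>0$ exactly as there (using $\nu=k^{(N-2s)/(N-2s-m)}$ to bound $(k/\nu)^{N-2s+\sigma}$ and $(\nu/k)^{-N}$ by $C\nu^{-m-\sigma}$ after shrinking $\sigma$).

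To extract the two main terms I would use that $|x|/\nu$ and $|x^1-x^i|$ are $\va$-independent, so $\int f\,U_{x^1,\va}^{2^*_s-1}\,\partial_\va U_{x^1,\va}=\tfrac1{2^*_s}\partial_\va\!\int f\,U_{x^1,\va}^{2^*_s}$ for any $\va$-independent $f$, and $U_{x^1,\va}^{2^*_s-2}\,\partial_\va U_{x^1,\va}=\tfrac1{2^*_s-1}\partial_\va\big(U_{x^1,\va}^{2^*_s-1}\big)$. Applying the first identity to (i) with $f=1-K$ and recycling the expansion of $\int_{\Omega_1}K\,U_{x^1,\va}^{2^*_s}$ from the proof of Proposition~\ref{propA.1} — the Taylor expansion of $\big||x-x^1|-\nu r_0\big|^m$ on $B_{|x^1|/2}(0)$ and the vanishing of the odd moment $\int|x_1|^{m-2}x_1U_{0,1}^{2^*_s}=0$ — gives the leading term $-\tfrac{mB_0}{\va^{m+1}\nu^m}$ and a further contribution $O\big(\tfrac1{\nu^m}\big|\nu r_0-|x^1|\big|^2\big)$ coming from the $\va$-derivative of the $(\nu r_0-|x^1|)^2$-term (which, unlike in Proposition~\ref{propA.1}, is now of error size). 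For (ii), writing $K=1+(K-1)$, the ``$1$''-part equals $-\partial_\va\!\int_{\Omega_1}U_{x^1,\va}^{2^*_s-1}w+\int_{\Omega_1}U_{x^1,\va}^{2^*_s-1}\sum_{i\ge2}\partial_\va U_{x^i,\va}$; the first term is minus the $\va$-derivative of the interaction sum of Proposition~\ref{propA.1}, so together with the contribution of the quadratic part of $I$ it produces $\sum_{i\ge2}\tfrac{B_2(N-2s)}{\va^{N-2s+1}|x^1-x^i|^{N-2s}}$ with error $O((k/\nu)^{N-2s+\sigma})$, while the second term is $\le C\sum_{i\ge2}\va^{-(N-2s)}|x^1-x^i|^{-(N-2s)}$ by the decay of $\partial_\va U_{x^i,\va}$ and hence is of the stated form. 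Collecting the pieces yields the first expansion, and setting $r=|x^1|$ gives the second.

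I expect the genuine difficulty to be purely in the error bookkeeping: verifying that the ``$\partial_\va$ falling on an interaction bubble'' terms, e.g. $\int_{\Omega_1}U_{x^1,\va}^{2^*_s-1}\sum_{i\ge2}\partial_\va U_{x^i,\va}$, and the $\va$-derivatives of the localization remainders really stay $O(\nu^{-m-\sigma})$ rather than only $O(\nu^{-m})$ — which is exactly where the decay of $\partial_\va U_{x^i,\va}$ and a re-run of Lemmas~\ref{lmA.1}--\ref{lmA.3} are invoked, just as in Proposition~\ref{propA.1}.
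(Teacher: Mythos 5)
Your overall strategy is the intended one (the paper gives no separate proof of Proposition \ref{propA.2}, deferring to ``similar to Proposition \ref{propA.1}'' and to \cite{wy1}): differentiate the energy, use $(-\Delta)^sU_{x^i,\va}=U_{x^i,\va}^{2^*_s-1}$ and the $\mathbb{Z}_k$-symmetry to reduce to one copy of $\partial_\va U_{x^1,\va}$, and re-run the expansion of Proposition \ref{propA.1} using that $\partial_\va U_{x^i,\va}$ has the same decay $(1+|x-x^i|)^{-(N-2s)}$. The treatment of the $K$-part is fine, and your classification of $\int_{\Omega_1}\sum_{i\ge2}U_{x^i,\va}^{2^*_s-1}\partial_\va U_{x^1,\va}$ as an error is actually correct (on $\Omega_1$ one has $|x-x^i|\ge\max\{\tfrac12|x^i-x^1|,|x-x^1|\}$, so each summand is $O(|x^1-x^i|^{-N})$ and the sum is $O((k/\nu)^N)=O(\nu^{-m-\sigma})$).

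The genuine gap is in the extraction of the interaction coefficient. In your decomposition the only leading-order interaction source on $\Omega_1$ is $-(2^*_s-1)\int_{\Omega_1}U_{x^1,\va}^{2^*_s-2}\,w\,\partial_\va U_{x^1,\va}$, which you rewrite as $-\partial_\va\int_{\Omega_1}U_{x^1,\va}^{2^*_s-1}w+\int_{\Omega_1}U_{x^1,\va}^{2^*_s-1}\sum_{i\ge2}\partial_\va U_{x^i,\va}$. The first piece is (minus) the derivative of the \emph{raw} interaction integrals and gives $+(N-2s)B_0\,\va^{-(N-2s+1)}\sum_{i\ge2}|x^1-x^i|^{-(N-2s)}$, i.e.\ twice the stated $B_2(N-2s)$ (recall $B_2=\tfrac12 B_0$ after the quadratic and potential parts are combined in Proposition \ref{propA.1}). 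The second piece is \emph{not} lower order: since $\partial_\va U_{x^i,\va}(x)\approx-\tfrac{N-2s}{2}\va^{-\frac{N-2s}{2}-1}\,|x^1-x^i|^{-(N-2s)}$ near $x^1$, it equals $-\tfrac{N-2s}{2}B_0\,\va^{-(N-2s+1)}\sum_{i\ge2}|x^1-x^i|^{-(N-2s)}+O(\nu^{-m-\sigma})$, which is exactly of size $\nu^{-m}$; merely bounding it by $C\sum_{i\ge2}\va^{-(N-2s)}|x^1-x^i|^{-(N-2s)}$, as you do, neither allows one to absorb it in the error nor determines its sign and constant. Moreover, the compensation you invoke from ``the contribution of the quadratic part of $I$'' is not available in your own bookkeeping: over $\Omega_1$ that contribution is precisely the $\sum_{i\ge2}U_{x^i,\va}^{2^*_s-1}\partial_\va U_{x^1,\va}$ term you already put into the remainder, together with $\tfrac1{2^*_s}\partial_\va\int_{\Omega_1}U_{x^1,\va}^{2^*_s}=O((k/\nu)^N)$ by scale invariance. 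So as written the argument does not produce the coefficient $B_2(N-2s)$ with the \emph{same} $B_2$ as in Proposition \ref{propA.1} (it would give $(N-2s)B_0$ plus an uncontrolled main-order correction), and this matching of constants is what Section 3 uses (the definition of $\va_0$ and Lemma \ref{lm4.3} require the main part of $\partial_\va F$ to be the $\va$-derivative of the main part of $F$). The fix is to compute, not bound, $\int_{\Omega_1}U_{x^1,\va}^{2^*_s-1}\partial_\va U_{x^i,\va}$ (equivalently, to differentiate the interaction $\int_{\r^N}U_{x^1,\va}^{2^*_s-1}U_{x^i,\va}=G(\va|x^1-x^i|)$ directly), after which the two pieces combine to the asserted $\sum_{i\ge2}B_2(N-2s)\va^{-(N-2s+1)}|x^1-x^i|^{-(N-2s)}$.
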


Applying the same argument as in the proof of Proposition A.4 in \cite{lpy}, we also can prove the following result.

\begin{proposition}\label{propA.3} We have
\begin{equation*}\begin{split}
I(\bar U_{r,\va})=&k\Big(A-\frac{B'_0}{\va^{m}\nu^m}-\frac{B'_1}{\va^{m-2}\nu^m}(\nu r_0-r)^2+\sum_{i=2}^{2k}\frac{B'_2k^{N-2s}}{\va^{N-2s}r^{N-2s}}\\
&\quad+ O\big(\frac{1}{\nu^{m+\sigma}}+\frac{1}{\nu^{m}}\big|\nu r_0-|\bar{x}^1|\big|^{2+\sigma}+\frac{k}{r^{N-2s}}\big)\Big)\\
\end{split}
\end{equation*}
 where
$B_0'\in [C_1',C_2'],\ 0<C_1'<C_2',$ $A=\frac{s}{N}\int_{\r^N}U_{0,1}^{2^*_s},\  B_i',\ C_i',\ i=1,2,$ are some constants, and $r=|x^1|$.
\end{proposition}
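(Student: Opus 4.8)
The plan is to run the computation of Proposition~\ref{propA.1} (equivalently Proposition~A.4 of \cite{lpy}) with the $2k$-point configuration $\{\bar x^i\}_{i=1}^{2k}$ and the alternating weights $(-1)^{i-1}$; the estimates are word-for-word the same except for the interaction term, where the signs now matter. Write $I(\bar U_{r,\va})=\frac12\langle\bar U_{r,\va},\bar U_{r,\va}\rangle_s-\frac1{2^*_s}\int_{\r^N}K\big(\tfrac{|x|}{\nu}\big)|\bar U_{r,\va}|^{2^*_s}$, with $K$ satisfying $(K')$. Using $(-\Delta)^sU_{\bar x^i,\va}=U_{\bar x^i,\va}^{2^*_s-1}$ and the symmetry of the $2k$ points, both pieces reduce, as in \eqref{A.1}, to a fixed multiple of $\int_{\r^N}U_{0,1}^{2^*_s}$ plus one-sector remainders, and the self-energy contributions combine exactly as in Proposition~\ref{propA.1} to the leading term $kA$, $A=\frac sN\int_{\r^N}U_{0,1}^{2^*_s}$.

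Next I would treat the interaction. By \eqref{A.2}, $\int_{\r^N}U_{\bar x^1,\va}^{2^*_s-1}U_{\bar x^i,\va}=\frac{B_0}{\va^{N-2s}|\bar x^1-\bar x^i|^{N-2s}}+O\big(|\bar x^1-\bar x^i|^{-(N-2s)-\sigma}\big)$ with $B_0>0$, and $|\bar x^1-\bar x^i|=2r\sin\frac{(i-1)\pi}{2k}$, so the relevant sums are $\sum_{i=2}^{2k}(-1)^{i-1}\big(2r\sin\frac{(i-1)\pi}{2k}\big)^{-(N-2s)}$. Summing this alternating series — the terms with $(i-1)\ll k$ dominate, $\sin\frac{(i-1)\pi}{2k}\sim\frac{(i-1)\pi}{2k}$, and $\sum_{j\ge1}(-1)^jj^{-(N-2s)}=-\eta(N-2s)$ with $\eta(N-2s)>0$ because $N-2s>1$ — gives $\sum_{i=2}^{2k}(-1)^{i-1}|\bar x^1-\bar x^i|^{-(N-2s)}=-B\,\frac{k^{N-2s}}{r^{N-2s}}+O\big(\frac{k}{r^{N-2s}}\big)$ for some $B>0$ (the $\sin$-error and the tail of the series producing only the remainder), while the $O\big(|\bar x^1-\bar x^i|^{-(N-2s)-\sigma}\big)$ corrections sum, using $r\sim\nu r_0$ and $\nu=k^{(N-2s)/(N-2s-m)}$, to $O\big((k/\nu)^{N-2s+\sigma}\big)=o\big(k^{N-2s}/r^{N-2s}\big)$.

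Then, for the nonlinear part, I would restrict by symmetry to the sector $\Omega_1$ (gaining a factor $2k$), where $\bar U_{r,\va}>0$ because the positive bump $U_{\bar x^1,\va}$ dominates, and expand $|\bar U_{r,\va}|^{2^*_s}=U_{\bar x^1,\va}^{2^*_s}+2^*_sU_{\bar x^1,\va}^{2^*_s-1}\sum_{i=2}^{2k}(-1)^{i-1}U_{\bar x^i,\va}+(\text{lower order})$, the lower-order terms controlled by $O\big((k/\nu)^{N-2s+\sigma}\big)$ exactly as in Proposition~\ref{propA.1}; the region near $\partial\Omega_1$, where $\bar U_{r,\va}$ is small, is harmless as it lies at distance $\gtrsim r/k$ from $\bar x^1$. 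The self-energy piece $\int_{\Omega_1}K\big(\tfrac{|x|}{\nu}\big)U_{\bar x^1,\va}^{2^*_s}$ is expanded as in \eqref{A.3} via the Taylor expansion of $\big||x-\bar x^1|-\nu r_0\big|^m$; since $(K')$ replaces $-c_0|r-r_0|^m$ by $+c_0|r-r_0|^m$, every resulting contribution to $I(\bar U_{r,\va})$ carries the \emph{opposite} sign to that in Proposition~\ref{propA.1}, producing $-\frac{B_0'}{\va^m\nu^m}$ (with $B_0'\in[C_1',C_2']$ from positivity of $\int_{\r^N}|x_1|^mU_{0,1}^{2^*_s}$) and $-\frac{B_1'}{\va^{m-2}\nu^m}(\nu r_0-r)^2$, up to $O\big(\nu^{-m-\sigma}+\nu^{-m}|\nu r_0-|\bar x^1||^{2+\sigma}\big)$. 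The cross piece $2^*_s\int_{\Omega_1}K\big(\tfrac{|x|}{\nu}\big)U_{\bar x^1,\va}^{2^*_s-1}\sum_{i\ge2}(-1)^{i-1}U_{\bar x^i,\va}$ is estimated as in the last displays of the proof of Proposition~\ref{propA.1} (splitting into $B_{|\bar x^1|/2}(\bar x^1)$, its complement, and $\r^N\setminus\Omega_1$, using $|K(\tfrac{|x|}{\nu})-1|\le C\nu^{-m}$ on the main part and absolute values on the errors), and the alternating-sum identity above makes it a negative multiple of $\frac{k^{N-2s}}{\va^{N-2s}r^{N-2s}}$ plus $O\big((k/\nu)^{N-2s+\sigma}+\nu^{-m-\sigma}\big)$. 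Finally, adding $\frac12\langle\cdot\rangle_s$ and $-\frac1{2^*_s}\int K(\cdot)|\bar U_{r,\va}|^{2^*_s}$ (the latter with the extra factors $2k$ and $2^*_s$), the two interaction contributions combine with a net \emph{positive} coefficient, giving $+\frac{B_2'k^{N-2s}}{\va^{N-2s}r^{N-2s}}$; collecting all remainders into $O\big(\nu^{-m-\sigma}+\nu^{-m}|\nu r_0-|\bar x^1||^{2+\sigma}+k/r^{N-2s}\big)$ and recalling $r=|\bar x^1|$ completes the argument.

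The step I expect to be the main obstacle is exactly this sign bookkeeping around the interaction. In Proposition~\ref{propA.1} every term of $\sum_i|x^1-x^i|^{-(N-2s)}$ is positive, so the order $k^{N-2s}/r^{N-2s}$ and the sign of the interaction are immediate; here one must first verify that the alternating sum does not cancel below that order — it does not, since $\eta(N-2s)\neq0$ — and then track the signs carefully through both $\frac12\langle\cdot\rangle_s$ and $-\frac1{2^*_s}\int K(\cdot)|\bar U_{r,\va}|^{2^*_s}$ to conclude that, because the nearest neighbours of $\bar x^1$ carry the opposite sign, the net interaction term of $I(\bar U_{r,\va})$ appears with a $+$ sign, opposite to $I(U_{r,\va})$. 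Everything else is a routine repetition of Appendix~A with the $2k$-point configuration in place of the $k$-point one and the evident sign changes.
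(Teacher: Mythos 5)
Your proposal is correct and follows essentially the same route as the paper: the paper likewise reduces everything to the expansion of Proposition \ref{propA.1} for the $2k$-point alternating configuration (with the $(K')$ sign flips) and then observes that the whole matter rests on showing $\sum_{j=2}^{2k}(-1)^{j}|\bar x^1-\bar x^j|^{-(N-2s)}=\bar B_0'k^{N-2s}/r^{N-2s}+O(k/r^{N-2s})$ with $\bar B_0'>0$. The only cosmetic difference is how that alternating sum is evaluated — you invoke the convergent alternating series $\sum_{j\ge 1}(-1)^{j-1}j^{-(N-2s)}>0$ directly, while the paper folds the sum by symmetry and compares $\sin\frac{(i-1)\pi}{2k}$ with its argument — and your explicit sign bookkeeping for the net interaction term matches the paper's conclusion.
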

\begin{proof}
Similar to  Proposition \ref{propA.1}, we have
\begin{equation*}\begin{split}
I(\bar U_{r,\va})=&k\Big(A-\frac{B_0}{\va^{m}\nu^m}-\frac{B_1}{\va^{m-2}\nu^m}(\nu r_0-r)^2-\sum_{i=2}^{2k}\frac{(-1)^{j-1}B_2}{\va^{N-2s}|\bar x^1-\bar x^i|^{N-2s}}\\
&+O\big(\frac{1}{\nu^{m+\sigma}}+\frac{1}{\nu^{m}}\big|\nu r_0-|\bar x^1|\big|^{2+\sigma}\big)\Big)
\end{split}
\end{equation*}
 where
$B_0\in [C_1,C_2],\ 0<C_1<C_2,$ $A=\frac{s}{N}\int_{\r^N}U_{0,1}^{2^*_s},\  B_i,\ C_i,\ i=1,2,$ are some constants, and $r=|\bar x^1|$.
Therefore, it is sufficient to prove that
$$
\sum_{j=2}^{2k}\frac{(-1)^{j}}{|\bar x^1-\bar x^j|^{N-2s}}=\frac{\bar B'_0
k^{N-2s}}{r^{N-2s}}+O\Big(\frac{k}{r^{N-2s}}\Big),
$$
for some $\bar B'_0>0$, which can be easily verified by using the following facts
\begin{eqnarray*}
\sum_{i=2}^{2k}\frac{1}{|x^i-x^1|^{N-2s}}&=&
\left\{\begin{array}{ll}
2\sum_{i=2}^{k}\frac{(-1)^i}{|\bar x^1-\bar x^i|^{N-2s}}+\frac{1}{(2|\bar x^1|)^{N-2s}}, & \text{if}\  k\ \text{is \ odd}\vspace{2mm}\\
       2\sum_{i=2}^{k}\frac{(-1)^i}{|\bar x^1-\bar x^i|^{N-2s}}-\frac{1}{(2|\bar x^1|)^{N-2s}}, & \text{if}\ k\ \text{is \ even}
    \end{array}\right.\\
& =& \left\{\begin{array}{ll}
        \frac{2}{(2|\bar x^1|)^{N-2s}}\sum_{i=2}^{k}\frac{(-1)^i}{(\sin\frac{(i-1)\pi}{2k})^{N-2s}}+\frac{1}{(2|\bar x^1|)^{N-2s}}, & \text{if}\  k\ \text{is \ odd}\vspace{2mm}\\
       \frac{2}{(2|\bar x^1|)^{N-2s}}\sum_{i=2}^{k}\frac{(-1)^i}{(\sin\frac{(i-1)\pi}{2k})^{N-2s}}-\frac{1}{(2|\bar x^1|)^{N-2s}}, & \text{if}\ k\ \text{is \ even}
    \end{array}\right.
\end{eqnarray*}
and
$$
0<c'\leq\frac{\sin\frac{(i-1)\pi}{2k}}{\frac{(i-1)\pi}{2k}}\leq C'',\
\ i=2,\cdots,k.
$$
\end{proof}

Similar to  Proposition \ref{propA.2} and \ref{propA.3}, we have
\begin{proposition}\label{propA.4}
\begin{equation*}\begin{split}
\frac{\partial I(U_{r,\va})}{\partial \va}=&k\Big(\frac{m B'_0}{\va^{m+1}\nu^m}-\frac{B'_2(N-2s)}{\va^{N-2s+1}r^{N-2s}}+O\big(\frac{1}{\nu^{m+\sigma}}+\frac{1}{\nu^{m}}\big|\nu r_0-|\bar x^1|\big|^{2}+\frac{k}{r^{N-2s}}\big)\Big)
\end{split}
\end{equation*}
 where
$B'_0\in [C'_1,C'_2],\ 0<C'_1<C'_2,$ $A=\frac{s}{N}\int_{\r^N}U_{0,1}^{2^*_s},\  B'_i,\ C'_i,\ i=1,2,$ are some constants, and $r=|\bar x^1|$.
\end{proposition}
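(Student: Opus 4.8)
The plan is to obtain Proposition~\ref{propA.4} by differentiating the energy $I(\bar U_{r,\va})$ of Proposition~\ref{propA.3} directly in $\va$, repeating for the alternating configuration $\bar U_{r,\va}=\ds\sum_{i=1}^{2k}(-1)^{i-1}U_{\bar x^i,\va}$ the argument that leads from Proposition~\ref{propA.1} to Proposition~\ref{propA.2}. Writing $\bar Z_{i,2}=\ds\frac{\partial U_{\bar x^i,\va}}{\partial\va}$ and using $(-\Delta)^s\bar U_{r,\va}=\ds\sum_{i=1}^{2k}(-1)^{i-1}U_{\bar x^i,\va}^{2^*_s-1}$, one has
$$
\frac{\partial I(\bar U_{r,\va})}{\partial\va}=\Big\langle(-\Delta)^s\bar U_{r,\va},\partial_\va\bar U_{r,\va}\Big\rangle-\int_{\r^N}K\Big(\frac{|x|}{\nu}\Big)|\bar U_{r,\va}|^{2^*_s-2}\bar U_{r,\va}\,\partial_\va\bar U_{r,\va}.
$$
The first thing I would record is that $|\partial_\va U_{\bar x^i,\va}(x)|\le C(1+|x-\bar x^i|)^{-(N-2s)}$ uniformly for $\va\in[\va_0,\va_1]$, i.e.\ $\partial_\va U_{\bar x^i,\va}$ has exactly the decay of $U_{\bar x^i,\va}$; consequently every weighted estimate in Lemmas~\ref{lmA.1}--\ref{lmA.3} (and Lemma~\ref{lm3.3}) has a $\va$-derivative analogue with the same constants, so all the bookkeeping of Propositions~\ref{propA.1} and \ref{propA.3} transfers verbatim.

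Next I would extract the main terms by the same two reductions used before. Replacing $|\bar U_{r,\va}|^{2^*_s-2}\bar U_{r,\va}$ by $\ds\sum_{i=1}^{2k}(-1)^{i-1}U_{\bar x^i,\va}^{2^*_s-1}$ costs only an error that, on each $\Omega_i$ where $U_{\bar x^i,\va}$ dominates, is controlled by the Hölder/decay estimate of Lemma~\ref{lm3.3}, contributing to the $O(\nu^{-m-\sigma}+k r^{-(N-2s)})$ remainder. Then split $K(|x|/\nu)=1+(K(|x|/\nu)-1)$. For the ``$1$''--part, the diagonal terms $\ds\int_{\r^N}U_{\bar x^i,\va}^{2^*_s-1}\partial_\va U_{\bar x^i,\va}=\ds\frac{1}{2^*_s}\partial_\va\int_{\r^N}U_{\bar x^i,\va}^{2^*_s}$ vanish by scale invariance of $\ds\int U^{2^*_s}$, so the surviving contribution is the cross term $\ds\sum_{i\ne j}(-1)^{i+j}\int_{\r^N}U_{\bar x^i,\va}^{2^*_s-1}\partial_\va U_{\bar x^j,\va}$, which by the computation behind Proposition~\ref{propA.3} equals, up to a remainder of the stated order,
$$
-\frac{(N-2s)B_2'}{\va^{N-2s+1}}\,k\sum_{i=2}^{2k}\frac{(-1)^{i-1}}{|\bar x^1-\bar x^i|^{N-2s}}=-\frac{(N-2s)B_3'k^{N-2s}}{\va^{N-2s+1}r^{N-2s}}+O\Big(\frac{k}{r^{N-2s}}\Big),
$$
the alternating lattice sum being evaluated exactly as in the proof of Proposition~\ref{propA.3} via $|\bar x^1-\bar x^i|=2r\sin\frac{(i-1)\pi}{2k}$. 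For the $(K-1)$--part, on $B_{|\bar x^1|/2}(\bar x^1)$ one uses $(K')$, $K(|x|/\nu)-1=c_0||x|/\nu-r_0|^m+O(\cdots)$, and the self-interaction (which now does not cancel) produces $\partial_\va\big(-\frac{B_0'}{\va^m\nu^m}\big)=\frac{mB_0'}{\va^{m+1}\nu^m}$, with the Taylor expansion of $||x-\bar x^1|-\nu r_0|^m$ around $|\bar x^1|$ giving the $\frac{1}{\nu^m}|\nu r_0-|\bar x^1||^2$ remainder; far from $\nu r_0$, $|K-1|\le C|x|^m/\nu^m$ and the integral is $O(\nu^{-N})$. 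Collecting terms yields the claimed expansion, and $B_0'\in[C_1',C_2']$ with $k$-independent bounds is inherited from Proposition~\ref{propA.3}.

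The step I expect to be the real obstacle is the same one as in Propositions~\ref{propA.1}--\ref{propA.3}: controlling the cross-interaction remainders uniformly in $k$ and confirming that the alternating sum $\ds\sum_{i=2}^{2k}(-1)^{i-1}|\bar x^1-\bar x^i|^{-(N-2s)}$ indeed has leading order $B_3'k^{N-2s}r^{-(N-2s)}$ with $B_3'>0$; both are already carried out in the proof of Proposition~\ref{propA.3} and can be quoted. The only genuinely new point of care is that one may \emph{not} differentiate the $O(\cdot)$ remainders of Proposition~\ref{propA.3} in $\va$ directly: each remainder must be re-derived at the level of the $\va$-derivative of the integrand. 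This is legitimate precisely because, as noted above, $\partial_\va U_{\bar x^i,\va}$ keeps the decay of $U_{\bar x^i,\va}$ and $\va$ ranges over the compact interval $[\va_0,\va_1]$, so the differentiated integrands obey the same pointwise bounds that powered the energy expansion, and the estimates go through with identical exponents.
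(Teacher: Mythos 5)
Your proposal is correct and follows essentially the same route the paper intends: the paper proves Proposition \ref{propA.4} only by the remark ``similar to Propositions \ref{propA.2} and \ref{propA.3}'' (themselves modeled on \cite{wy1,lpy}), i.e.\ compute $\partial_\va I(\bar U_{r,\va})=\langle I'(\bar U_{r,\va}),\partial_\va \bar U_{r,\va}\rangle$, redo the energy-expansion estimates at the level of the differentiated integrand using that $\partial_\va U_{\bar x^i,\va}$ has the same decay as $U_{\bar x^i,\va}$, and evaluate the alternating interaction sum via $|\bar x^1-\bar x^i|=2r\sin\frac{(i-1)\pi}{2k}$ exactly as in Proposition \ref{propA.3}. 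Your explicit cautions (vanishing of the diagonal terms by scale invariance, and not differentiating the $O(\cdot)$ remainders but re-deriving them) are consistent with, and fill in, what the paper leaves implicit.
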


\end{document}